\definecolor{szin}{rgb}{0,0.44,0.4}
\definecolor{szin2}{rgb}{0.902,0.2705,0}
\definecolor{szin3}{rgb}{0,0.5,0}
\def\E{\mathbb{E}}
\def\Pr{\mathbb{P}}
\def\Var{\mbox{\rm Var}}
\newcommand{\restr}[1]{\!\!\upharpoonright_{#1}}
\newcommand{\eps}{{\mbox{$\epsilon$}}}
\newcommand{\R}{{\mathbb R}}
\newcommand{\Z}{{\mathbb Z}}
\newcommand{\N}{{\mathbb N}}
\newcommand{\T}{{\overline{T}}}
\newcommand{\F}{{\mathcal F}}
\newcommand{\CC}{{\mathcal C}}
\newcommand{\A}{{\mathcal A}}
\newcommand{\B}{{\mathcal B}}
\newcommand{\RR}{{\mathcal R}}
\newcommand{\GC}{{\mathcal G}}
\newcommand{\TT}{\mathcal{T}}
\newcommand{\Past}{\mathsf{Past}}
\newcommand{\Future}{\mathsf{Future}}
\newcommand{\Step}{\mathsf{Step}}
\def\eps{\epsilon}
\def\1{\mathbf{1}}
\def\aa{\alpha}
\def\P{\Pr}
\def\Pn{\Pr_{N}}
\def\Pinf{\Pr_{\infty}}
\def\md{\mid}
\def\Bb#1#2{{\def\md{\bigm| }#1\bigl(#2\bigr)}}
\def\BB#1#2{{\def\md{\Bigm| }#1\Bigl(#2\Bigr)}}
\def\Bs#1#2{{\def\md{\mid}#1(#2)}}
\def\Pb{\Bb\P}
\def\Eb{\Bb\E}
\def\PB{\BB\P}
\def\PnB{\BB\Pn}
\def\EB{\BB\E}
\def\Ps{\Bs\P}
\def\Pns{\Bs\Pn}
\def\Es{\Bs\E}
\DeclareMathOperator{\sign}{sgn}
\def\UST{\mathsf{UST}}
\def\ST{\mathsf{St}}
\def\pow{\mathsf{pow}}
\def\shiftpow{\mathsf{shiftpow}}
\newcommand{\avg}[1]{\left< #1 \right>} 
\newtheorem{theorem}{Theorem}
\numberwithin{theorem}{section}
\newtheorem{lemma}[theorem]{Lemma}
\newtheorem{proposition}[theorem]{Proposition}
\newtheorem{corollary}[theorem]{Corollary}
\numberwithin{equation}{section}
\numberwithin{figure}{section}
\begin{document}


\title{Speeding up non-Markovian First Passage Percolation\\ with a few extra edges}

\author{Alexey Medvedev 
\thanks{Namur Institute for Complex Networks (naXys), Universit\'e de Namur, Rempart de la Vierge, 8, Namur 5000 Belgium and ICTEAM, Universit\'e Catholique de Louvain, Av. Georges Lemaitre 4, 1348 Ottignies-Louvain-la-Neuve, Belgium. Email: {\tt an\_medvedev `at' yahoo `dot' com}}
\and
G\'abor Pete
\thanks{Alfr\'ed R\'enyi Institute of Mathematics, Re\'altanoda u.~13-15., Budapest 1053 Hungary, and Budapest University of Technology and Economics, Egry J.~u.~1, Budapest 1111 Hungary. Email: {\tt robagetep `at' gmail `dot' com}}
}

\date{}

\maketitle

\begin{abstract}
One model of real-life spreading processes is First Passage Percolation (also called SI model) on random graphs. Social interactions often follow bursty patterns, which are usually modelled with i.i.d.~heavy-tailed passage times on edges. On the other hand, random graphs are often locally tree-like, and spreading on trees with leaves might be very slow, because of bottleneck edges with huge passage times. Here we consider the SI model with passage times following a power law distribution $\Ps{\xi>t}\sim t^{-\aa}$, with infinite mean. For any finite connected graph $G$ with a root $s$, we find the largest number of vertices $\kappa(G,s)$ that are infected in finite expected time, and prove that for every $k \leq \kappa(G,s)$, the expected time to infect $k$ vertices is at most $O(k^{1/\alpha})$. Then, we show that adding a single edge from $s$ to a random vertex in a random tree $\TT$ typically increases $\kappa(\TT,s)$ from a bounded variable to a fraction of the size of $\TT$, thus severely accelerating the process. We examine this acceleration effect on some natural models of random graphs: critical Galton-Watson trees conditioned to be large, uniform spanning trees of the complete graph, and on the largest cluster of near-critical Erd{\H o}s-R\'enyi graphs. In particular, at the upper end of the critical window, the process is already much faster than exactly at criticality.\\
\ \\
{\bf Keywords:} temporal networks; near-critical random graphs; Galton-Watson tree; Erd{\H o}s-R\'enyi graph; P\'olya urn process; spreading phenomena; SI model; First Passage Percolation; bursty time series; non-Markovian processes. {\bf MSC2010 classification:} 60K35; 60K37: 05C80; 82C99; 90B18 
\end{abstract}

\section{Introduction}\label{s.intro}

\subsection{Motivation and background}

Spreading is one of the most important dynamic processes on complex networks as it is the basis of a broad range of phenomena from epidemic contagion to diffusion of innovations \cite{Vespignani2012, RevModPhys2015}. One of the original, and still primary, reasons for studying networks is to understand the mechanisms by which diseases, information, computer viruses, rumors, innovations spread over them \cite{MN1}. The spreading problem initially came from epidemiology, and the terminology still remembers these roots: in the simplest spreading model, the two-state \textit{susceptible-infected (SI) model}, any particular person can be either in susceptible (S) or in infected (I) state, with the transition rule $\text{S}\to\text{I}$, meaning that once an infection is obtained, the person stays infected forever. It is usually assumed that initially there is only one infected person.

In the usual mathematical model of SI spreading, a rooted connected simple graph $G=(V,E,s)$ is given, with vertices representing people, and edges representing the connections between them, through which the infection can pass. Initially, only the root $s\in V$ is infected.  The edges $e\in E$ have i.i.d.~random positive weights $\xi(e)$, with common distribution $\xi$, representing the passage time of the infection. Infection is then transmitted along the edges from infected vertices to susceptible ones with passage times according to the edge weights (measured from the moment of infecting the first endpoint of the edge). The process runs until all vertices get infected. Alternatively, and this is how the model was first defined in the mathematical community under the name {\it First Passage Percolation (FPP)} \cite{Hammersley}, one may consider $\xi(e)$ as the length of the edge $e\in E$, and then we can think of some fluid flowing through this random medium at speed 1, starting from the source $s$. In the two models, SI and FPP, the moment when a vertex is reached by the infection/fluid will be the same. Then, we can measure spreading via two different, but of course closely related stochastic processes: either by $(T_k)_{k=1}^{|V|}$, the sequence of times when exactly $k$ vertices are infected in the process, or by $(N_t)_{t\geqslant 0}$, the number of vertices reached by time $t$. In FPP one is typically more concerned with the process $(N_t)_{t\geqslant 0}$ (see, e.g., \cite{Kesten1987}), while in the current paper we consider the process $(T_k)_{k=1}^{|V|}$, therefore we will prefer the SI model notation.

The SI spreading model has attracted much attention in the network science community, giving rise to a solid number of papers  (e.g., \cite{IribarrenMoro2009, Karsai_etal2011, HolmeMasuda2013, Jo_etal2014, HorvathKertesz2014}). In the mathematical literature, there has been an enormous number of papers studying this model on various graphs and using various distributions for $\xi$; see, e.g., \cite{Hammersley, Kesten1987, FillPemantle1993, BvdHH, vdHBhamidi2011, Komjathy2014, bhamidi2014universality}; two surveys are \cite{FPP50} on the first 50 years of the model on Euclidean lattices and \cite[Chapter 8]{RemcoV2} on other graphs. Although originally $\xi$ was typically assumed to have Exponential or Bernoulli distribution, more recently, motivated by the bursty nature of communications in many real networks, $\xi$ with {\it power law distributions} $\pow(\aa)$ have also been considered: $\Ps{\xi>t}=1\wedge(t/t_0)^{-\aa}$, where $t_0>0$ and $\aa>0$. In this case the model is non-Markovian, which creates many obstacles in studying it; this will be the situation in the present work, as well. 

Most sparse (e.g., bounded average degree) random graph models produce graphs that are {\it locally tree-like}: a large neighbourhood of a random root is a tree with high probability, maybe with some extra edges decorating the tree. Typically, {\it supercritical} random graph models have been studied, where the unique giant connected component looks locally like a fast-growing supercritical Galton-Watson tree, maybe with decorations. (In more precise terms, the Benjamini-Schramm limit of the giant clusters is an infinite unimodular random tree with average degree larger than 2, or more generally, an invariantly non-amenable unimodular random graph \cite{BSch,AldousLyons}). This local fast-growing tree structure has been used in several works very successfully to understand the behavior of SI and FPP on the entire graph \cite{Komjathy2014, bhamidi2014universality, BaroniKomjathy2017}, even with a general absolutely continuous distribution for $\xi$.

However, it was observed in \cite{IribarrenMoro2009} that the SI spreading with heavy-tailed $\xi$ on {\it finite trees}, when started from a typical site, may be very slow. Real-life examples with such $\xi$ include, for example, information cascades in Twitter \cite{YeraliWiki2017, MartinBivas2017}. A striking feature of slow spreading when $\xi$ has infinite mean was noticed in \cite{HorvathKertesz2014} via computer simulations, which is most apparent when not the curve $t\mapsto \langle N_t\rangle$ is considered, but the ``inverse'' curve $k \mapsto \langle T_k \rangle$. Namely, running the simulation of the process $T=(T_k)_{k=1}^{n}$ on a tree for $M$ times, and definining $\avg{T_k}=\frac{1}{M}\sum_{i=1}^{M}T^{(i)}_k$, the \textit{spreading curve} $\left(\avg{T_k}, k/n\right)$ exhibits ``uncontrolled large plateaux'', which do not decrease and do not converge as we  increase the number of runs (see the dark blue curves on Figure~\ref{fig:tree_vs_cycle}~(a)). 
Plateaux of similar type were also empirically found in \cite{MartinBivas2017}. On the other hand, adding just one extra edge to the tree ({\it which does not change the local statistics of the graph!}) makes the spreading curve quite smooth; see the lighter green curves on Figure~\ref{fig:tree_vs_cycle}~(a), and also Figure \ref{fig:tree_vs_cycle}~(b) that shows SI spreading on the cycle, with power law exponent $\aa\in (1/2,1)$.

\begin{figure}[h]
\centering
\begin{minipage}[b]{0.49\textwidth}
\centering
\includegraphics[width=0.8\linewidth]{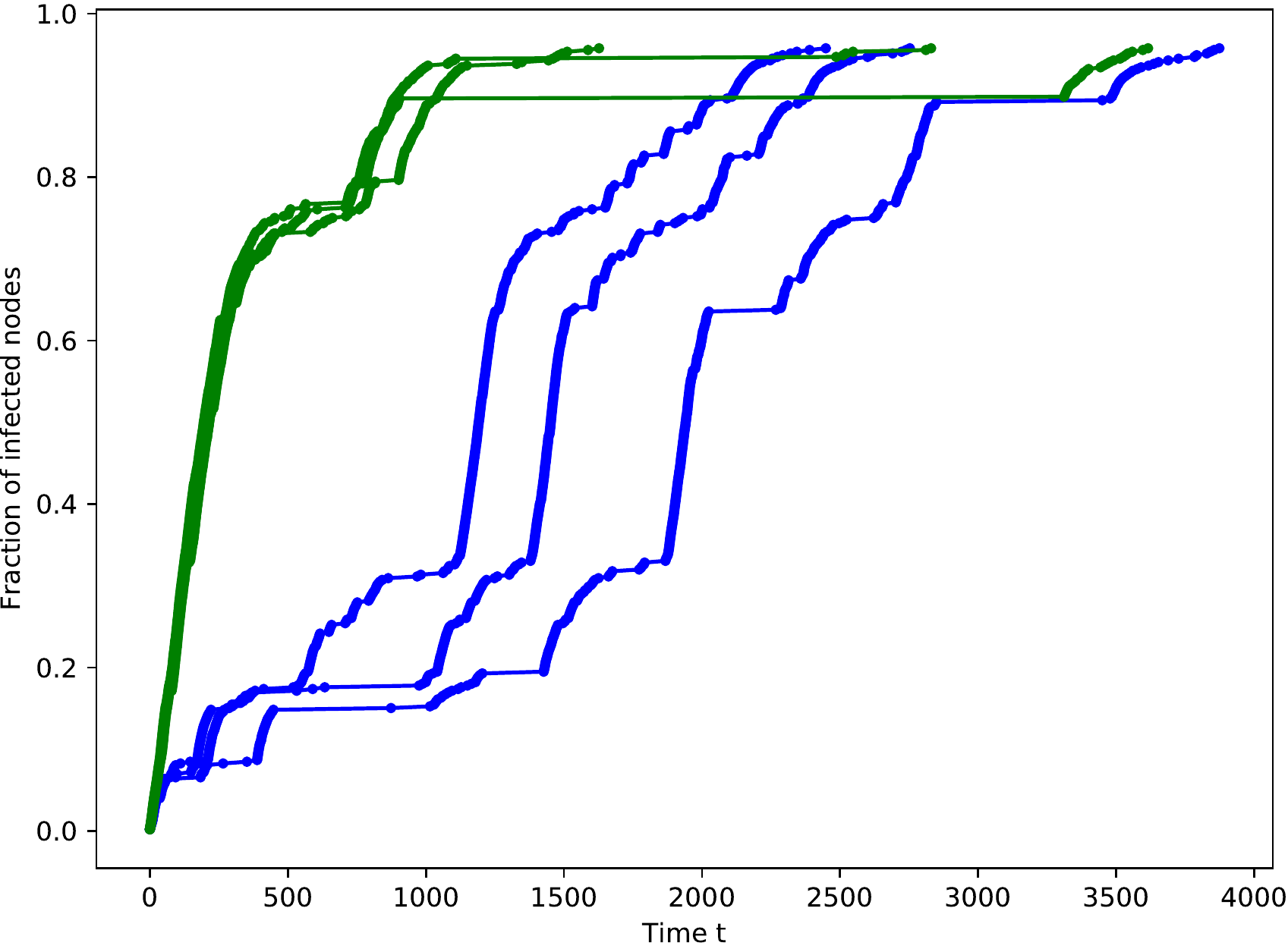}
\caption*{(a)}
\end{minipage}
\begin{minipage}[b]{0.49\textwidth}
\centering
\includegraphics[width=0.8\linewidth]{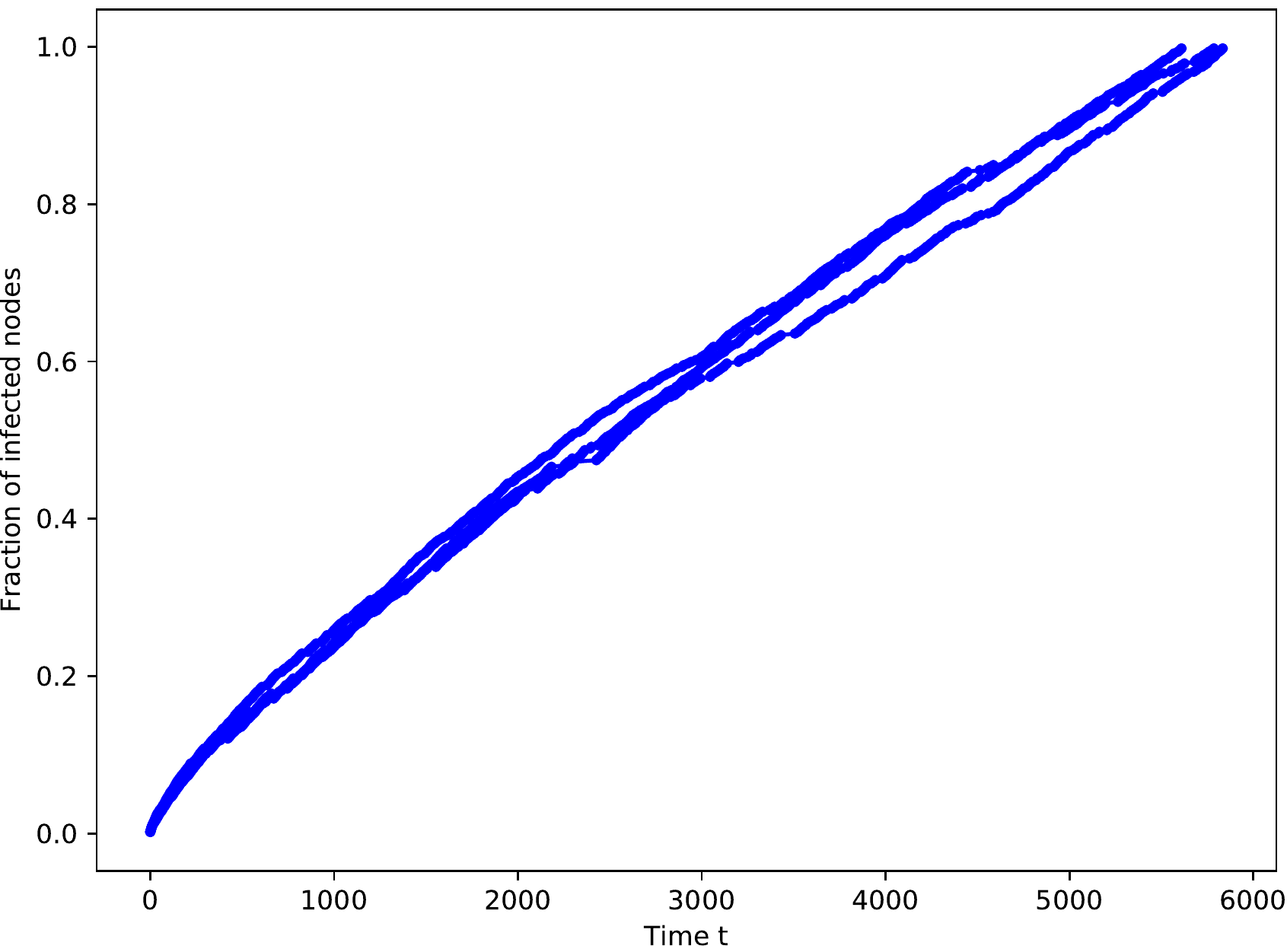}
\caption*{(b)}
\end{minipage}
\caption{Spreading curves of the SI model simulation with power law weights $\xi$ with $\aa=0.8$. (a) For the three dark \textcolor{blue}{blue} curves, the underlying graph is a tree with $472$ vertices; for the three lighter \textcolor{ForestGreen}{green} curves, one fixed edge is added between the root and a random vertex; b) the underlying graph is a cycle $C_n$ with $472$ vertices.}
\label{fig:tree_vs_cycle}
\end{figure}

The first motivation for our paper was to understand the striking effect of adding a few edges to large finite trees. After describing this phenomenon in detail for arbitrary deterministic graphs, we address some natural models of random trees, using a variety of techniques. Since supercritical trees are typically not finite (or, if a supercritical GW tree is conditioned to be finite, it becomes subcritical \cite[Section 5.7]{LPbook}), the most natural and interesting case seems to be adding a random edge to a critical random tree, conditioned to be large in one way or another. Or, in the most natural model that we will consider, the largest cluster in a {\it near-critical Erd{\H o}s-R\'enyi graph} $G\big(n,\frac{1}{n}+\frac{\lambda}{n^{4/3}}\big)$, for $\lambda\ll 0$ is very likely to be a large critical tree, while for $\lambda\gg 0$ is very likely to be a critical tree with several extra edges (see Section~\ref{s.ER} for references). In particular, the SI process with power law weights for $\lambda\ll 0$ is slow and has uncontrolled large plateaux, while most of the graph is infected in a fast and smooth way for $\lambda\gg 0$ (see Figure~\ref{fig:ER_simulation}).

The plateaux are explained by the presence of so-called temporal ``bottlenecks'', which are large passage times that occur on some particular edges. Indeed, if there is a $k$ for which $\Es{T_k}<\infty$ but $\Es{T_{k+1}}=\infty$, then the Law of Large Numbers tells us that 
$$\frac{1}{M}\left(\sum_{i=1}^{M}T^{(i)}_{k+1} - \sum_{i=1}^{M}T^{(i)}_k\right) \to \infty \text{ a.s., as }M\to\infty,$$
and hence the first uncontrolled plateaux in the spreading curve is between $\avg{T_k}$ and $\avg{T_{k+1}}$. The next plateau occurs as if we started the process anew after the first bottleneck, without the already infected part of the graph. Hence all the later plateaux can be investigated with the method we describe below, and here we will focus on the time when the first plateau appears.

It is easy to obtain some intuition how the temporal bottlenecks disappear for the SI process on the cycle with $\aa\in(1/2,1)$:  there are always at least \textit{two} edges through which the infection might proceed, and the minimum of two independent $\pow(\alpha)$ variables already has a finite expectation. In fact, we will show in Subsection~\ref{ss.spreading_Z} that $\Es{T_k}\asymp k^{1/\aa}<\infty$ for every $1\leqslant k\leqslant n$. However, because of the non-Markovian nature of the process (with already old edges getting occupied even later), making such an argument rigorous for a general graph with cycles is not at all trivial. 

In any case, we see that for cycles $\aa=1/2$ must be a threshold, and in the current paper we will concentrate on the case of $\aa\in(1/2,1)$. Nevertheless, our results can be extended to the case when the infection has $d>2$ ways to proceed and $\aa>1/d$;  simplistically, the main reason for this bound is that then the minimum of $d$ independent $\pow(\alpha)$ variables has a finite expectation. See Subsection~\ref{ss.extopen} for more details.

\subsection{Results and outline}

Summarizing the above assumptions, in this paper we consider the SI spreading model with i.i.d. power law transmission times and we study the role of cycles in speeding up of the process when $\aa\in (1/2,1)$ and thus eliminating large plateaux on (or `smooth') the average spreading curve up to a certain level. We rigorously identify when the first possible temporal bottleneck appears and its relation to the graph structure. We show that for each finite connected graph $G$ on $n$ vertices there exists a specific threshold $\kappa(G,s)$, where $s$ is an initially infected vertex, such that the average time to infect $k\leqslant\kappa(G,s)$ vertices is finite, and is even bounded by a polynomial function in $k$, and when $k> \kappa(G,s)$ the average time is infinite. The result, proved in Section~\ref{s.determ}, can be stated as follows.

\begin{theorem}\label{thm:main_smoothing}
Consider a graph $G$ on $n$ vertices with a root $s$ and the SI spreading process $T=\left(T_j\right)_{j=1}^{n}$ with power law weights with $\aa\in(1/2,1)$. Then there exists the number $\kappa(G,s)$ such that for each $k$, where $1\leqslant k\leqslant \kappa(G,s)$, the expected time to infect $k$ vertices is bounded by
$$\Es{T_k}\leqslant C k^{1/\aa},$$
and for $k> \kappa(G,s)$, the expectation $\Es{T_k}=\infty$.
\end{theorem}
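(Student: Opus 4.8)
\emph{The value of $\kappa(G,s)$.} Since $T_k$ is just the $k$-th smallest of the weighted distances $\{\dist_G(s,v):v\in V\}$, the sequence $k\mapsto T_k$ is non-decreasing, $\{k:\Es{T_k}<\infty\}$ is an initial segment, and the mere existence of $\kappa(G,s)$ is automatic; the content of the theorem is the polynomial bound, and I would organise the whole proof around an explicit formula for the threshold. Let $B$ be the $2$-edge-connected component of $G$ containing $s$, and let $H_1,\dots,H_d$ be the connected components of $G\setminus B$; each $H_i$ is attached to $B$ by a single bridge $e_i$ (two edges between $H_i$ and $B$ would give two edge-disjoint paths and merge $H_i$ into $B$). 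I claim
$$\kappa(G,s)=n-\max_{1\le i\le d}|H_i|\qquad(\text{with }\max_{\emptyset}=0,\text{ so }\kappa=n\text{ when }G\text{ is }2\text{-edge-connected}),$$
equivalently $\kappa(G,s)=|B|+\sum_i|H_i|-\max_i|H_i|$: one can reach $B$ together with all pendant blobs \emph{except the largest}. Write $H_{i^\ast}$ for a largest blob; note for later that $d\le\kappa$, $|B|\le\kappa$, and $\sum_{i\neq i^\ast}|H_i|\le\kappa$.

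\emph{The easy inequality $\Es{T_k}=\infty$ for $k>\kappa$.} Any $S\subseteq V$ with $|S|=k>\kappa$ satisfies $|S\cap H_{i^\ast}|\ge k-(n-|H_{i^\ast}|)=k-\kappa\ge1$; applied to the set of the $k$ closest vertices, this shows one of them lies in $H_{i^\ast}$, and since every $s$--$H_{i^\ast}$ path uses the bridge $e_{i^\ast}$, that vertex is at distance $\ge\xi(e_{i^\ast})$ from $s$. Hence $T_k\ge\xi(e_{i^\ast})$, and $\Es{T_k}=\infty$ because $\xi$ has infinite mean ($\aa<1$).

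\emph{The bound $\Es{T_k}\le Ck^{1/\aa}$ for $k\le\kappa$.} Using $\Es{T_k}=\int_0^\infty\Ps{T_k>t}\,dt$, the plan is to bound $\Ps{T_k>t}$ by something with tail exponent $>1$ and $k$-dependence mild enough to integrate to $O(k^{1/\aa})$. Write $N_t$ for the number of vertices within distance $t$ of $s$, so $\{T_k>t\}=\{N_t<k\}$, and split the deficit $n-N_t=\delta_0(t)+\sum_{i=1}^dD_i(t)$, where $\delta_0(t)$ counts unreached vertices of $B$ and $D_i(t)$ counts unreached vertices of $H_i$. Two structural inputs are needed. (a) $D_i(t)=|H_i|$ exactly on the event $\{\xi(e_i)>t\}$, of probability $(t/t_0)^{-\aa}$ and depending only on $\xi(e_i)$; whereas $0<D_i(t)<|H_i|$ forces a bridge strictly inside $H_i$ to exceed the residual budget, an event of order $\mathrm{poly}(|H_i|)\,(t/t_0)^{-\aa}$. (b) Because $B$ is $2$-edge-connected, every vertex of $B$ has two edge-disjoint short routes from $s$, and running this through an ear decomposition (the cycle case being the split into the two arcs at $s$, where $\Ps{N^B_t<j}\le\Pr[a(t)\le j-2]\Pr[b(t)\le j-2]$) gives the cycle-type estimate $\Ps{\text{fewer than }j\text{ vertices of }B\text{ reached by }t}\lesssim\mathrm{poly}(j)\,(t/t_0)^{-2\aa}$ — the two independent routes supply the square, and $2\aa>1$ as $\aa>1/2$; the same lemma applies to the $2$-edge-connected cores inside the $H_i$. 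Now $N_t<k$ forces $\delta_0(t)+\sum_iD_i(t)\ge n-k+1$, and since $n-k\ge\max_i|H_i|\ge|H_i|$, no single $D_i$ can supply the required deficit: it must come from at least two of the independent, disjoint-edge events in (a)–(b) — two pendant blobs entirely unentered, or one unentered together with a further deficit inside another blob or inside $B$, etc. Crucially, when the large blob $H_{i^\ast}$ participates it can only do so by being \emph{entirely} unentered (contributing $|H_{i^\ast}|$ via the single event $\xi(e_{i^\ast})>t$, with no size-dependent multiplicity), so all the ``$\mathrm{poly}(|H_i|)$'' factors that actually occur involve only the small blobs, where $|H_i|\le\kappa$. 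Bounding $\Ps{T_k>t}$ by the sum over minimal ``deficit-achieving configurations'' of products of at least $\ell=\ell(k)\ge 2$ factors $(t/t_0)^{-\aa}$ — where $\ell(k)$ increases as $k$ decreases, since more blobs must be skipped to lose $n-k$ vertices — produces, for large $t$, a bound of the shape $\binom{d}{\ell}\bigl(C_0(t/t_0)^{-\aa}\bigr)^{\ell}$. Integrating, and using $d\le\kappa$ with elementary binomial inequalities, collapses this to $\Es{T_k}\le Ck^{1/\aa}$ with $C=C(\aa,t_0)$; the crossover scale $t\asymp k^{1/\aa}$ is precisely where $\bigl(\binom d\ell C_0^{\ell}\bigr)^{1/(\aa\ell)}$ is of order $k^{1/\aa}$ (for $\ell=2$, i.e.\ $k$ near $\kappa$, this is $\asymp d^{1/\aa}\asymp\kappa^{1/\aa}$; for $\ell$ comparable to $d$, i.e.\ small $k$, it is $O(1)$).

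\emph{Where the difficulty lies.} The real obstacle is that $T_k$ is an order statistic and \emph{not} the infection time of a prescribed target set: one cannot fix a good $k$-set $A$ and estimate $\Es{\max_{v\in A}\dist_G(s,v)}$, because $\kappa(G,s)$ is in general strictly larger than $\max\{|A|:\Es{\max_{v\in A}\dist_G(s,v)}<\infty\}$ — already a $2$-edge-connected blob with three pendant leaves has $\kappa$ equal to the latter plus one (as an order statistic one reaches two of the three leaves in finite expected time, but no fixed pair of them). The deficit decomposition is exactly what circumvents this: ``$N_t<k$'' forces a large deficit regardless of how the reached vertices are distributed among the blobs, and the arithmetic implication ``deficit $>n-k\ge\max_i|H_i|\Rightarrow$ at least two independent rare events'' does the job of the (non-existent) single bottleneck edge. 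The parts that must be executed carefully rather than sketched are then: the combinatorial bookkeeping of the minimal deficit-achieving configurations, keeping the prefactor of the clean form $\binom{d}{\ell(k)}$ so that integration yields a constant depending only on $\aa$ and $t_0$; the verification that the large blob never contributes a size-dependent multiplicity; and the cycle-type lemma (b) for arbitrary $2$-edge-connected graphs, which, though intuitively clear from the two-arc picture, requires a genuine argument (e.g.\ via an ear decomposition) and must come with the explicit $\mathrm{poly}(j)$ dependence used above.
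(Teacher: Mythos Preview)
Your combinatorial description of $\kappa(G,s)$ is correct and agrees with the paper's Lemma~1.2 (your formula $n-\max_i|H_i|$ is just $\min_e|\CC(s,G\setminus e)|$ rewritten in terms of the bridge-block tree), and your argument for $\Es{T_k}=\infty$ when $k>\kappa$ is fine. The upper bound, however, is where the proposal has a genuine gap, and the paper takes a completely different route.

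The central problem is your input (b): the claim that for a $2$-edge-connected graph $B$ one has $\Ps{\text{fewer than }j\text{ vertices of }B\text{ reached by }t}\lesssim \mathrm{poly}(j)\,t^{-2\alpha}$. This is essentially the theorem itself restricted to $2$-edge-connected graphs, and you only gesture at it (``via an ear decomposition''). The two-arc argument works for a single cycle, but an ear decomposition produces a sequential structure in which later ears are only reachable after earlier ones, so the ``two independent routes'' picture does not obviously survive the induction; one cannot simply multiply tail probabilities. You acknowledge this is the hard part, and indeed it is: without a proof of (b), the whole tail-decomposition approach is circular. There is also a concrete error in input (a): the equality $\{D_i(t)=|H_i|\}=\{\xi(e_i)>t\}$ is false --- the left side is $\{\mathrm{dist}(s,b_i)+\xi(e_i)>t\}$ where $b_i\in B$ is the $B$-endpoint of the bridge, so it depends on edges inside $B$ and the events for different $i$ are \emph{not} independent. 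For the upper bound on $\Ps{N_t<k}$ you need the upper inclusion, which again throws you back onto (b).

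The paper avoids all of this by never computing tail probabilities of $N_t$. Instead it argues step by step: as long as $k\le\kappa(G,s)$, the front of the infection always contains at least two active edges, so one can couple $T$ with a slowed-down process $\overline T$ that only ever uses two active edges at a time (Lemma~3.4). Via a shifted-power-law trick (older edges stochastically dominate newer ones for the shifted law), $\overline T$ is in turn dominated by an explicit one-dimensional process $Q$ in which one edge is always ``maximally old'' and the other fresh; the key computation $\Es{\min\{X,Y-t\}\mid Y>t}\asymp t^{1-\alpha}$ (Lemma~3.2) then gives the recursion $\Es{Q_{k+1}}\le\Es{Q_k}+C\,\Es{Q_k}^{1-\alpha}$, which integrates to $\Es{Q_k}\le dk^{1/\alpha}$ (Lemma~3.1). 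This domination chain is uniform over all graphs and never needs to know the structure of $B$ beyond the fact that two active edges are available; in particular it proves the $2$-edge-connected case and the general case simultaneously, rather than reducing one to the other.
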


The number $\kappa(G,s)$ identifies the position of the first temporal bottleneck for the process, and has a simple combinatorial description, which is presented in the following Lemma. 

\begin{lemma}\label{lem:comb_kappa}
Let $G$ be a finite rooted graph with root $s$ and let $T$ be the $SI$ spreading process on $G$ with weights having absolutely continuous distribution $\xi$, such that $\Es{\xi}=\infty$. Then,
$$\kappa(G,s) = \min_{e\in E(G)}|\CC(s,G\backslash e)|,$$
where $|\CC(s,G\backslash e)|$ is the size of the connected component of vertex $s$ in the graph $G$ without edge $e$.
\end{lemma}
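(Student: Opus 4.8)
The plan is to prove the two inequalities $\kappa(G,s)\le m^\star$ and $\kappa(G,s)\ge m^\star$ separately, where $m^\star:=\min_{e\in E(G)}|\CC(s,G\backslash e)|$; since by Theorem~\ref{thm:main_smoothing} (or simply by the monotonicity $T_j\le T_{j+1}$) the number $\kappa(G,s)$ is the largest $j$ with $\Es{T_j}<\infty$, these two bounds together give the claim. Throughout, write $T_w$ for the time at which a given vertex $w$ becomes infected, so that $T_w=\dist_\xi(s,w)$ in the FPP picture.

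For $\kappa(G,s)\le m^\star$ I would show directly that $\Es{T_{m^\star+1}}=\infty$. If $m^\star=n$ there is nothing to prove, so assume $m^\star<n$ and fix an edge $e^\star=\{u^\star,v^\star\}$ attaining the minimum. Then $G\backslash e^\star$ must be disconnected --- otherwise $\CC(s,G\backslash e^\star)=V(G)$, forcing $m^\star=n$ --- so $e^\star$ is a bridge, and with $H:=\CC(s,G\backslash e^\star)$ we have $s\in H$, $|H|=m^\star$, and $v^\star\notin H$. As $e^\star$ is a bridge, every path in $G$ from $s$ to a vertex $w\notin H$ uses $e^\star$, hence $T_w\ge\xi(e^\star)$ for every such $w$. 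At time $T_{m^\star+1}$ exactly $m^\star+1>|H|$ vertices are infected, so at least one infected vertex lies outside $H$; therefore $T_{m^\star+1}\ge\xi(e^\star)$, and since $\Es{\xi}=\infty$ we conclude $\Es{T_{m^\star+1}}=\infty$, i.e.\ $\kappa(G,s)\le m^\star$.

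The reverse inequality $\kappa(G,s)\ge m^\star$ is equivalent to $\Es{T_{m^\star}}<\infty$; this is the heart of Theorem~\ref{thm:main_smoothing}, whose proof in Section~\ref{s.determ} in fact builds, for every $k\le m^\star$, a spreading strategy with $\Es{T_k}\le Ck^{1/\aa}$. I would organise that construction as a recursion along the \emph{bridge-tree} of $G$, the tree whose nodes are the $2$-edge-connected components of $G$ and whose edges are the bridges; one checks that $m^\star=n$ precisely when $G$ is $2$-edge-connected, and in general $m^\star$ equals $n$ minus the total size of the largest subtree hanging off the component $B_0$ containing $s$. The base case is a $2$-edge-connected graph, where one must show that all $n$ vertices get infected in finite expected time. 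The inductive step first infects $B_0$ via the base case, and then processes the bridges leaving $B_0$ in the (random) order in which they get crossed, recursing into the subtrees they lead into while keeping the not-yet-crossed bridges at $B_0$ as alternative routes; the point is that whenever the infection threatens to get stuck, there is still a competing bridge whose clock has been running since time $0$. The elementary fact driving everything is that in the regime $\aa>1/2$ the minimum of two (or more) independent $\pow(\aa)$ variables has finite expectation, and moreover a sum of finitely many $\pow(\aa)$ variables still has a $\pow(\aa)$-type tail, so that the expectations contributed by the competing routes telescope to a finite total.

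I expect the real difficulty to lie entirely in the $2$-edge-connected base case. Because the process is non-Markovian, an edge whose first endpoint was infected long ago carries a large residual passage time, so one cannot naively claim that a growing infected set with edge-boundary of size at least $2$ advances at a controlled rate. Making this precise --- for instance via an ear decomposition of $G$, filling each ear from its two already-infected endpoints and bounding the resulting ``meeting-point'' bottleneck times, together with the telescoping estimate above --- is where the work lies; granting the base case, the reduction along the bridge-tree and the matching lower bound shown above are routine.
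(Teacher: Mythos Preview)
Your proof rests on a mis-reading of the definition of $\kappa(G,s)$. In the paper (Section~\ref{ss.kappa_result}), $\kappa(G,s)$ is \emph{not} defined as the largest $j$ with $\Es{T_j}<\infty$; it is defined purely combinatorially as the minimal $k$ for which there exists a realisation $\omega$ with the front $F(T_k(\omega))$ containing at most one active edge. With this definition, Lemma~\ref{lem:comb_kappa} is a two-line combinatorial statement: if at step $k$ the front consists of a single active edge $e$, then $e$ separates the infected set from the rest, so $e$ is a bridge and the infected set is exactly $\CC(s,G\backslash e)$; conversely, for any bridge $e$ one can arrange weights so that all of $\CC(s,G\backslash e)$ is infected before $e$ fires, giving a front $\{e\}$. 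No expectations enter at all.

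Your interpretation actually makes the lemma \emph{false} under its stated hypotheses. The lemma only assumes that $\xi$ is absolutely continuous with $\Es{\xi}=\infty$, not that $\xi$ is $\pow(\aa)$ with $\aa\in(1/2,1)$. Take $G=C_3$ and $\xi\sim\pow(\aa)$ with $\aa<1/2$: the cut-edge formula gives $m^\star=3$, but $\Es{T_2}=\Es{\min(\xi_1,\xi_2)}=\infty$, so the ``largest $j$ with $\Es{T_j}<\infty$'' is $1\neq 3$. Thus the identity you set out to prove does not hold for your version of $\kappa$.

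What has happened is that you have collapsed Lemma~\ref{lem:comb_kappa}, Lemma~\ref{lem:SI_inf_expect}, and Theorem~\ref{thm:main_smoothing} into one statement. Your bound $\Es{T_{m^\star+1}}=\infty$ is correct and is essentially the content of Lemma~\ref{lem:SI_inf_expect}. Your proposed route to $\Es{T_{m^\star}}<\infty$ via the bridge-tree and an ear-decomposition of the $2$-edge-connected blocks is an alternative plan for Theorem~\ref{thm:main_smoothing} (the paper instead uses a delayed two-front process dominated by the auxiliary process $Q$), but it is not what Lemma~\ref{lem:comb_kappa} asks for, and it would in any case be restricted to the $\aa\in(1/2,1)$ regime rather than the general $\xi$ in the lemma.
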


In Section~\ref{s.two}, we will describe two examples, the cycle and the star graph, for both of which we have $\kappa=n-1$ and $\Es{T_\kappa}\asymp n^{1/\alpha}$, but {\it for quite different reasons}. We have been unable to identify either of these examples as the ``hardest graph to infect'', which can be considered as an explanation for the non-triviality of Theorem~\ref{thm:main_smoothing}.
\medskip

Next, we will study the value of $\kappa(G,s)$ in some near-critical random graph models, and show that by adding one or a few edges, it increases from a bounded value to a positive proportion of all the vertices. In our first example, we add an extra edge to a {\it critical Galton-Watson tree} conditioned to have depth at least $N$, between the starting vertex $s$ and a uniform random vertex. Here our main tool is comparison with the critical GW tree conditioned to be infinite \cite{Kesten1986}, and our goal is to be as elementary as possible, to develop arguments that may be applicable to other critical random tree models that one may be interested in (see Section~\ref{s.GW}). In our second example, we add one random edge to the {\it uniform spanning tree} of the complete graph; this model makes it possible to use elegant methods relying on Loop-Erased Random Walks (via Wilson's algorithm \cite{Wilson1996}) and P\'olya urns with time-dependent increments \cite{Pemantle1990}, very different from the first example (see Section~\ref{s.UST}). Finally, our third example is the ``physically'' most relevant one: the {\it near-critical Erd{\H o}s-R\'enyi graph} $G\big(n,\frac{1}{n}+\frac{\lambda}{n^{4/3}}\big)$, $\lambda\in\R$. Here the analysis is based on fine understanding of the metric structure of the graph \cite{Addario-Berry2010-2}, encoded typically as a Gromov-Hausdorff scaling limit using Aldous' Continuum Random Tree \cite{AldousCRTII} as a basic building block (see Section~\ref{s.ER}).

In the first two of these models, in order to raise $\kappa(G,s)$ significantly, it is important that the extra edge is incident to $s$, or in other words, that $s$ lies in the cycle that we have created. Instead of adding the extra edge, we could have moved $s$ onto the ``spine'' of the tree, so that there are two disjoint long paths emanating from $s$, and this would have a very similar accelerating and smoothing effect. So, in these cases, adding the extra edge could just be considered as a simple way to make the starting vertex special. This also makes sense from a practical point of view: if one's goal is to speed up the infection process, it is easier to create a new link, or to start the infection at two random places (which is almost the same), than finding a special vertex in the graph to start the process. In the third case, near-critical Erd{\H o}s-R\'enyi graphs, the starting vertex will not be special, and therefore $\kappa(G,s)$ will remain of constant order. However, after infecting a tiny portion of the graph, progressing with $k$ a little bit (although typically with a huge $T_k$), we get to a vertex with a large $\kappa$, and from there we infect most of the graph in a fast and smooth way. 
\medskip

We now state our exact results for the three near-critical random graph models. In our first example, we add a single extra edge $e$ between the initially infected vertex $s$ and a uniform random vertex in a critical Galton-Watson tree $\TT$. This graph will be denoted by $\TT_{+e}$, while $|\TT|$ denotes the number of vertices in $\TT$.

\begin{theorem}\label{thm:GW_smoothing}
Let $\TT^N$ be a critical Galton-Watson tree, conditioned on surviving to at least $N$ generations, $Z_N>0$, and consider the SI spreading process with power law weights with $\aa\in(1/2,1)$.  Then as $N\to\infty$,
\begin{enumerate}
\item[{\bf (1)}] the sequence of r.v. $\kappa(\TT^N,s)$ is tight;
\item[{\bf (2)}] for any $\varepsilon>0$ there exists $\delta>0$, such that
$$\PB{\frac{\kappa(\TT^N_{+e},s)}{|\TT^N_{+e}|}>\delta}>1-\varepsilon.$$
\end{enumerate}
\end{theorem}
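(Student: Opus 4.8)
The plan is to reduce both statements to the combinatorial description of $\kappa$ from Lemma~\ref{lem:comb_kappa}, namely $\kappa(G,s)=\min_{e\in E(G)}|\CC(s,G\setminus e)|$, and then analyze the relevant edge cuts in the random tree $\TT^N$ and in $\TT^N_{+e}$.

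For part~\textbf{(1)}, observe that in a tree $\TT^N$ every edge $e$ is a cut edge, and removing the edge incident to $s$ that leads into the tree separates $s$ (together with possibly a few other children-subtrees of $s$) from a component on the $s$-side. The cleanest bound: if $s$ has a child whose subtree is a single vertex (a leaf attached to $s$), then $\kappa(\TT^N,s)\le \deg(s)$ or even smaller; more robustly, removing \emph{any} edge at distance $1$ from $s$ leaves $s$ in a component whose size is $1$ plus the sizes of the \emph{other} subtrees hanging off $s$. So $\kappa(\TT^N,s)\le 1+\big(\text{total size of all but the largest subtree at }s\big)$ is not quite right either; rather, $\kappa(\TT^N,s)$ is at most the size of the smallest-on-one-side cut, and in particular at most the size of the subtree below any child of $s$ that does \emph{not} contain the deep part forcing $Z_N>0$. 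The key point is that conditioning on $Z_N>0$ forces at most one ``spine'' direction to be deep, so all the \emph{other} finite decorations at or near $s$ have sizes that are tight as $N\to\infty$. I would make this precise using Kesten's theorem: the local structure of $\TT^N$ near $s$ converges to that of the critical GW tree conditioned to be infinite (the sin-tree), which has a single infinite spine with i.i.d.\ critical (hence a.s.\ finite, with finite distribution) bushes hanging off it. Cutting the first spine edge on the $s$-side isolates $s$ together with only the finite bushes between $s$ and that cut, whose total size is a tight random variable. Hence $\kappa(\TT^N,s)$ is bounded above by a tight family, giving tightness.

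For part~\textbf{(2)}, adding the edge $e=\{s,U\}$ with $U$ uniform creates exactly one cycle $\gamma$, through $s$ and $U$. Now by Lemma~\ref{lem:comb_kappa}, $\kappa(\TT^N_{+e},s)=\min_{f} |\CC(s,\TT^N_{+e}\setminus f)|$. An edge $f$ \emph{not on} $\gamma$ is still a cut edge, and removing it splits off a subtree not containing $s$; an edge $f$ \emph{on} $\gamma$ is no longer a cut edge, so $\CC(s,\TT^N_{+e}\setminus f)=\TT^N_{+e}$ entirely. Therefore
\[
\kappa(\TT^N_{+e},s)=\min_{f\in E(\TT^N)\setminus\gamma}\ \big|\CC(s,\TT^N_{+e}\setminus f)\big|\ \wedge\ |\TT^N_{+e}|,
\]
and the minimum over $f\notin\gamma$ equals the size of the smallest ``pendant piece'' hanging off the cycle $\gamma$. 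So the claim $\kappa(\TT^N_{+e},s)\ge \delta|\TT^N_{+e}|$ is exactly the statement that with probability $\ge 1-\varepsilon$, \emph{every} component obtained by deleting a non-cycle edge contains at least a $\delta$-fraction of the vertices — equivalently, the cycle $\gamma$ together with its large attached bushes carries a positive proportion on \emph{both} sides of every cut. The right way to see this: the path in $\TT^N$ from $s$ to $U$ is the cycle-minus-$e$; cutting a non-cycle edge $f$ separates the tree into the piece containing $\gamma$ and a piece not meeting $\gamma$. We need the piece not meeting $\gamma$ to never be too large, i.e., $\gamma$ must pass ``deep into the bulk.'' Since $U$ is a uniform vertex and $\TT^N$ conditioned on $Z_N>0$ has, after Kesten-type normalization (or via the CRT scaling limit), a macroscopic diameter with the uniform vertex at macroscopic distance from $s$ along a path that splits the mass, the ancestral line $s\leadsto U$ will with high probability have the property that no single bush hanging off it contains more than $(1-\delta)$ of the total mass. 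Quantitatively: pick $\delta$ small; the bad event is that some edge off the $s$–$U$ path cuts off more than $(1-\delta)|\TT^N|$ vertices, which would force the $s$–$U$ path and all its other bushes into a set of size $<\delta|\TT^N|$ — but $U$ is uniform, so the probability that $U$ falls in such a small ``thin'' part is $O(\delta)$, uniformly in $N$ by tightness of the rescaled tree (Aldous' CRT convergence for $\TT^N$, or a direct second-moment/size-biasing argument on GW trees conditioned on height).

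The main obstacle I expect is making the step ``$U$ uniform $\Rightarrow$ the $s$–$U$ path splits the mass'' quantitative and uniform in $N$. This requires a concrete handle on the two-point structure of $\TT^N$: the joint law of the subtree masses on the two sides of a uniformly chosen edge along the $s$–$U$ geodesic. I would handle this either (a) via the CRT scaling limit — the rescaled $\TT^N$ converges to (a tilted) CRT, the rescaled path $s\leadsto U$ to a random geodesic, and the mass measure to the CRT mass measure, so the assertion becomes a statement about the CRT that holds by absolute continuity and the known fractal structure — or (b) more elementarily, by the decomposition of a GW tree conditioned on height along its leftmost deepest path combined with a size-biasing (spinal) argument to control the bush sizes, which is the route the paper says it prefers. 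Either way, the deterministic reduction via Lemma~\ref{lem:comb_kappa} is the conceptual core, and everything after it is a statement purely about the random tree $\TT^N$ and the uniform vertex $U$, with no further reference to the (non-Markovian, delicate) SI dynamics.
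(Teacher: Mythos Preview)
Your approach to part~(1) is essentially the paper's: both argue via Kesten's infinite tree that only one subtree at the root is macroscopic, so $\kappa(\TT^N,s)$ equals the total size of the remaining (finite, tight) bushes. The paper makes this precise through a quantitative coupling (its Proposition~\ref{pr:coupling_T_T*}) rather than a bare local-limit statement, but the idea is the same.

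For part~(2), you have a genuine slip in the middle: the minimum of $|\CC(s,\TT^N_{+e}\setminus f)|$ over $f\notin\gamma$ is \emph{not} ``the size of the smallest pendant piece''; it equals $|\TT^N_{+e}|$ minus the \emph{largest} pendant subtree (this is exactly the paper's formula~\eqref{eq:kappa_tree_edge}). You do recover the correct target a few lines later (``the piece not meeting $\gamma$ should never be too large''), so this is a presentational error rather than a fatal one.

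More interestingly, your core argument for part~(2) is \emph{different from and simpler than} the paper's, and it does not need the CRT at all. Here is the clean version of what you are really saying. Fix the tree $\TT^N$; for $\delta<1/2$ the set $W:=\{w:|T_w|>(1-\delta)|\TT^N|\}$ (subtrees below $w$) is a chain containing the root, with deepest element $w^*$. The event $\{\kappa(\TT^N_{+e},s)<\delta|\TT^N|\}$ is exactly $\{\exists\,w\notin\text{($s$--$U$ path)}:|T_w|>(1-\delta)|\TT^N|\}=\{U\notin T_{w^*}\}$, and since $|T_{w^*}|>(1-\delta)|\TT^N|$ and $U$ is uniform, this has conditional probability $<\delta$. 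Hence $\Pb{\kappa(\TT^N_{+e},s)\ge\delta|\TT^N_{+e}|}>1-\delta$ for \emph{any} rooted tree, with no appeal to GW structure, scaling limits, or size-biasing. Your invocation of ``Aldous' CRT convergence'' and ``second-moment/size-biasing'' is therefore unnecessary and obscures how elementary the argument is.

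By contrast, the paper does not use this uniformity trick. It works on the Kesten tree side: it takes the first $\delta^2N$ spine vertices, shows the bushes hanging there have total volume of order $\delta^4N^2$ while $|\TT^N|$ is of order $N^2$, deduces that $U$ lands beyond this initial segment with high probability, and then lower-bounds $\kappa$ by the volume of those initial bushes. This yields more structural information (explicit volume scales along the spine), at the cost of a longer proof that genuinely uses the GW/Kesten machinery. Your route is shorter and model-free; the paper's route is more informative about the tree itself.
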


Secondly, a similar result, but with a different approach, is provided for the model of uniform spanning tree of the complete graph.

\begin{theorem}\label{thm:UST}
Let $\UST(n)$ be the uniform spanning tree of the complete graph on $n$ vertices, and consider the SI spreading process with power law weights with $\aa\in(1/2,1)$, with starting vertex $s$. Let $e$ be a uniform random extra edge with one endpoint being $s$. Then, $\forall\varepsilon>0$ there is $\delta>0$ such that, for all $n$ large enough,
$$\PB{\frac{\kappa(\UST(n)_{+e},s)}{n}>\delta}>1-\varepsilon.$$
\end{theorem}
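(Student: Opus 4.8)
The plan is to reduce the statement to the asymptotics of a time-dependent P\'olya urn, via Wilson's algorithm.

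\emph{Step 1: combinatorial reduction.} The graph $\UST(n)_{+e}$ is a tree with one extra edge, so it has a unique cycle, namely $P\cup\{e\}$, where $P=(s=c_0,c_1,\dots,c_L=v)$ is the path joining $s$ and $v$ in $\UST(n)$. Deleting a cycle edge keeps the graph connected, so in Lemma~\ref{lem:comb_kappa} the minimum is attained at a tree edge $e'$ off the cycle; deleting such an $e'$ separates $\UST(n)_{+e}$ into the piece containing the whole cycle (in particular $s$) and a pendant subtree disjoint from the cycle. Rooting $\UST(n)$ at $s$ and letting $\B_i$ be the connected component of $c_i$ after the edges of $P$ are removed, each such pendant subtree lies inside some $\B_i\setminus\{c_i\}$, whence $\kappa(\UST(n)_{+e},s)\ge n-\max_{0\le i\le L}|\B_i|$. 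It therefore suffices to prove that, with probability at least $1-\varepsilon$, no $\B_i$ contains more than a $(1-\delta)$-fraction of all $n$ vertices.

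\emph{Step 2: Wilson's algorithm produces a P\'olya urn.} I would generate $\UST(n)$ by Wilson's algorithm rooted at $v$ (which we may take independent of the tree), processing $s$ first: the first loop-erased random walk, started from $s$, is exactly the path $P$. Each subsequent loop-erased walk is an ordinary random walk on $K_n$ from a not-yet-covered vertex; since such a walk, when it starts outside a set $A$, enters $A$ at a uniformly distributed vertex of $A$, it attaches a ``chunk'' of new vertices --- its loop-erasure --- to a uniformly random vertex of the current tree. A chunk joins whichever $\B_i$ contains its attachment point, so $(|\B_0|,\dots,|\B_L|)$ evolves as a generalized P\'olya urn with $L+1$ colours and initial composition $(1,\dots,1)$: at each step a random increment $m\ge1$ (the chunk size, whose conditional law depends only on the current total $\sigma$, with $\Pr(m>t)\le(1-\sigma/(n-1))^{t}$) is added to a colour chosen with probability proportional to its current value. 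Since the increment is independent of which colour is drawn, an elementary computation shows that each proportion $|\B_i|/\sigma$ is a bounded martingale; this is precisely the time-dependent P\'olya urn of \cite{Pemantle1990}.

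\emph{Step 3: the urn disperses its mass.} Two facts drive the argument. First, the graph distance $L$ between $s$ and a uniform vertex of $\UST(n)$ satisfies $L\to\infty$ in probability (in fact $L\asymp\sqrt n$), so the urn starts with a growing number of colours. Second, the chunk sizes are of order $n/\sigma$, so once $\sigma$ is well past $\sqrt n$ a single chunk changes every proportion by $o(1)$, and the summed quadratic variation of each proportion from then on is small. Combined with the lower bound on $L$, the convergence theory for time-dependent P\'olya urns \cite{Pemantle1990} then forces the vector of proportions to converge to a random probability law (of Poisson--Dirichlet type) whose largest atom is almost surely strictly less than $1$ and satisfies $\Pr(\text{largest atom}>1-\delta)\to0$ as $\delta\to0$, uniformly in $n$ by tightness. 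Choosing $\delta=\delta(\varepsilon)$ small enough gives $\max_i|\B_i|\le(1-\delta)n$, hence $\kappa(\UST(n)_{+e},s)\ge\delta n$, with probability at least $1-\varepsilon$.

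\emph{Main obstacle.} The delicate point is the early regime of the urn, when $\sigma$ is still of order $\sqrt n$: there a single loop-erased walk can contribute a chunk comparable in size to the whole current tree, so one colour genuinely jumps to a positive proportion, and the heart of the matter is to rule out that this proportion is $1-o(1)$ except on an event of probability $<\varepsilon$. This is exactly where the quantitative estimates for time-dependent P\'olya urns in \cite{Pemantle1990} are needed, and it is also why $\delta$ must be taken small: the largest atom of the limit can indeed be close to $1$, but only with small probability, essentially because a short $s$--$v$ path is atypical. By comparison, the reduction in Step~1 and the late-time martingale bookkeeping are routine.
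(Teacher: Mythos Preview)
Your Steps~1 and~2 are correct: the bushes $\mathcal B_i$ off the cycle do evolve as a generalized P\'olya urn under Wilson's algorithm on $K_n$, and $\kappa\ge n-\max_i|\mathcal B_i|$. But the urn you set up has $L+1$ colours each starting at weight~$1$, and this is where Step~3 has a genuine gap. Pemantle's result in \cite{Pemantle1990} concerns a \emph{two}-colour urn with bounded increments; it says nothing about a growing number of colours with infinitesimal initial mass, and the ``Poisson--Dirichlet type'' limit you invoke is not supplied by that reference. A direct martingale-plus-union-bound argument also fails: the proportion of colour~$i$ is a martingale starting at $1/(L+1)$, so Markov gives $\Pr\big(|\mathcal B_i|/n>1-\delta\big)\le\frac{1}{(L+1)(1-\delta)}$, and summing over the $L+1$ colours yields only the useless bound $\frac{1}{1-\delta}>1$. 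The early regime you flag as the ``main obstacle'' is indeed the obstacle, and you have not overcome it.

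The paper sidesteps this by using a \emph{two}-colour urn from the outset, with both colours starting at order $\sqrt n$. After building the cycle via the first LERW, it runs a \emph{second} LERW from a fresh vertex until hitting the cycle at some $c^*$, and sets $R_0=(\text{second LERW})\cup\{c^*\}$ and $B_0=(\text{cycle})\setminus\{c^*\}$. Both $|R_0|$ and $|B_0|$ are then LERW lengths, each with an absolutely continuous limit after rescaling by $\sqrt n$; one then proves that all later increments satisfy $\Delta_i/\sqrt n\le M$ with high probability, so the rescaled urn has bounded increments and non-degenerate start, and Pemantle's theorem gives that $|R_I|/|C_I|$ has no atom at $0$ or $1$. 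Since $R_I$ is one specific bush and $B_I$ is everything else, $\kappa\ge\min(|R_I|,|B_I|)$, and the conclusion follows. The device you are missing is precisely this: delaying the start of the urn by one LERW so that the distinguished colour already carries $\Theta(\sqrt n)$ mass.

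Your approach \emph{can} be repaired without abandoning the multi-colour picture: collapse your $L+1$ colours into two groups of size $\sim L/2$ (say, the bushes on the first half of the path versus the second half). Both groups then start with $\Theta(\sqrt n)$ balls, the paper's two-colour argument applies verbatim to show each group ends with proportion in $(\delta,1-\delta)$, and hence no single $\mathcal B_i$ can exceed $1-\delta$. But as written, Step~3 does not close.
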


Thirdly:

\begin{theorem}\label{thm:ER}
Let $\CC=\CC^n_1(\lambda)$ be the largest cluster of the Erd{\H o}s-R\'enyi graph $G\big(n,\frac{1}{n}+\frac{\lambda}{n^{4/3}}\big)$.
\begin{itemize}
\item[{\bf (1)}] Let $\sigma$ be a uniform random vertex in $\CC$. Then, as $n\to\infty$, the sequence of variables $\kappa(\CC,\sigma)$ is tight for any fixed $\lambda\in\R$: for every $\eps>0$, if $K>0$ is large enough, then 
$$
\Pb{ \kappa(\CC,\sigma) < K }>1-\eps,
$$ for all $n>n_0(\lambda,\eps)$ large enough. 

\item[{\bf (2)}] For every $\lambda\in\R$ and $\eps>0$, there exists a $\delta_0(\eps)>0$ that is independent of $\lambda$, and a $\delta_1(\lambda,\eps)>0$ such that 
$$
\PB{\delta_0 |\CC| < \max_{s\in V(\CC)} \kappa(\CC,s) < (1-\delta_1) {|\CC|} } > 1-\eps\,.
$$
In particular, the spreading curve may have fast smooth parts for a positive proportion of the volume, but the maximum proportion is bounded away from 1.
\item[{\bf (3)}]  For every $\eps>0$, if $\lambda > 0$ is large enough, then
$$
\PB{\forall x\in V(\CC) \ \exists  s\in V(\CC) \text{ with }  k_x(s)<\eps |\CC| \text{ and } \kappa(\CC'_x,s) / |\CC| > 1-\eps} > 1-\eps\,,
$$
where $k_x(s)$ is the random number of steps in which the infection from $x$ reaches $s$, and $\CC'_x$ is the graph we obtain from $\CC$ by removing the vertices infected before $s$. 
\end{itemize}
\end{theorem}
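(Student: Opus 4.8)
The plan is to build on the known Gromov--Hausdorff scaling limit of $\CC=\CC^n_1(\lambda)$, after rescaling distances by $n^{-1/3}$ and counting vertices in units of $n^{2/3}$, to a random compact metric space $\mathcal{M}_\lambda$ that is a multiplicative coalescent--type limit: it is obtained from Aldous' CRT (or finitely many glued CRT-like pieces) by identifying finitely many pairs of points, the number of such identifications being a Poisson-type random variable depending on $\lambda$ \cite{Addario-Berry2010-2, AldousCRTII}. By Lemma~\ref{lem:comb_kappa}, $\kappa(\CC,\sigma)$ is the smallest size of the $\sigma$-side of any edge cut, i.e.\ the smallest cluster containing $\sigma$ after deleting one edge. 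The three parts then become statements about where $\sigma$ (resp.\ an optimally chosen $s$, resp.\ every $x$) sits relative to the ``core'' (the union of cycles, a.k.a.\ the $2$-edge-connected kernel) and relative to the long pendant subtrees hanging off it.

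\smallskip
\emph{Part (1).} A uniform random vertex $\sigma$ is, with probability bounded below, at macroscopic distance from the core: it lies in a pendant subtree $T$ hanging off the core at some attachment point $a$, and the subtree branch of $T$ below $\sigma$ towards the core contains, with high probability, at least one edge whose removal separates $\sigma$ from $a$ and hence from the whole core while leaving only $O(1)$ vertices (in $n^{2/3}$ units, $o(1)$ mass) on $\sigma$'s side --- indeed, any pendant path hanging off $\sigma$ that is a dead end of bounded length already does the job, and by the local (Benjamini--Schramm) picture of $\CC$, which near a uniform vertex looks like a critical Poisson($1$) GW tree (possibly size-biased along the spine), the probability that $\sigma$ has \emph{no} such small cut nearby is small once the threshold $K$ is large. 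This is exactly the statement that $\kappa(\CC,\sigma)$ is tight, and mirrors part~{\bf (1)} of Theorem~\ref{thm:GW_smoothing}; I would reduce it to a statement about the critical GW tree conditioned to be large via the local convergence in \cite{Addario-Berry2010-2}.

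\smallskip
\emph{Part (2).} For the lower bound $\delta_0|\CC| < \max_s \kappa(\CC,s)$: pick $s$ to be a vertex \emph{on the core}, say on one of the cycles. Then deleting any single edge $e$ cannot disconnect $s$ from a positive fraction of $\CC$ unless $e$ separates a macroscopic pendant subtree --- but there are only finitely many (tight number of) pendant subtrees of mass exceeding any fixed $\eta>0$, and removing all but the largest of them from the ``$s$-side count'' still leaves a definite proportion. Concretely, $\kappa(\CC,s)\geq |\CC| - \max_{e}|\CC(s\text{-complement}, \CC\setminus e)|$, and for $s$ on the core the worst cut chops off the single largest pendant subtree, whose mass is $(1-\delta_0)|\CC|$ with $\delta_0$ a universal constant (independent of $\lambda$, since the largest pendant subtree of $\mathcal{M}_\lambda$ is, in distribution, dominated by a subcritical-type object with a law not degenerating as $\lambda$ varies over any fixed compact set --- and for the $\lambda$-independence one uses monotonicity/coupling of the multiplicative coalescent). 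For the upper bound $\max_s\kappa(\CC,s) < (1-\delta_1)|\CC|$: for \emph{every} vertex $s$, the core has a cut edge $e$ (e.g.\ if the core is a single cycle, any two edges of it; more robustly, the ``first'' pendant edge on either side) such that one side of $\CC\setminus e$ contains at least a $\delta_1$-fraction of $\CC$ not containing $s$ --- equivalently, $\CC$ always has a balanced-enough edge cut, which holds because a critical ER giant / CRT-like space cannot be concentrated near a single point: its mass profile around any point leaves a positive proportion at macroscopic distance. Here $\delta_1$ is allowed to depend on $\lambda$ (a less tangled core, i.e.\ $\lambda$ more negative, forces $\delta_1$ smaller since the single cycle can be short).

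\smallskip
\emph{Part (3).} For $\lambda$ large, the core of $\CC$ has many ($\Theta(\lambda^{3/2})$, with high probability) cycles and its $2$-edge-connected kernel has, in the scaling limit, many internal edges and branch points; moreover the total mass of pendant subtrees shrinks relative to $|\CC|$ only in the sense that no \emph{single} pendant subtree carries more than $\eps|\CC|$ mass (this is the key quantitative input: as $\lambda\to\infty$ the largest pendant subtree, as a fraction of $|\CC|$, tends to $0$ in probability, because the CRT pieces get increasingly ``used up'' by identifications). Given this, for any infection source $x$: run the infection from $x$; since $\kappa(\CC,x)$ may be $O(1)$ we only know the first few steps are slow, but after a tight number $k_x(s)\leq\eps|\CC|$ of steps --- in fact after the infection crawls out of $x$'s local pendant neighbourhood and reaches \emph{some} vertex $s$ on the core (which happens within $o(|\CC|)$ vertices because the pendant subtree containing $x$ has mass $<\eps|\CC|$ once $\eps$ is fixed and $\lambda$ large, possibly needing to also pass through $O(1)$ more bottlenecks to exit it, each costing finitely many extra infected vertices) --- we land on the core. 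Then $\CC'_x$, obtained by deleting the already-infected vertices, still contains essentially all of the core and all the large pendant subtrees, and $s$ is on its core, so by the Part~(2) lower-bound argument $\kappa(\CC'_x,s)/|\CC| > 1-\eps$ provided the largest pendant subtree of $\CC'_x$ has mass $<\eps|\CC|$ --- which is exactly the $\lambda$-large input above. Quantifying ``reach some core vertex within $\eps|\CC|$ steps'' for \emph{every} $x$ simultaneously (not just a typical one) requires a union bound / worst-case control over pendant subtree sizes, which is the delicate point.

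\smallskip
\emph{Main obstacle.} The hard part is not the topology of the limit but making the limiting mass statements uniform in the right parameters: for Part~(2), getting $\delta_0$ \emph{independent of $\lambda$} needs a coupling argument showing the largest pendant subtree's relative mass is bounded below uniformly over all $\lambda\in\R$ (including $\lambda\to-\infty$, where $\CC$ degenerates to a single CRT and the statement is really about the CRT's largest subtree below a random/optimal point); and for Part~(3), the claim that \emph{every} $x$ reaches the core within $\eps|\CC|$ infected vertices, with the non-Markovian power-law dynamics, forces us to combine the combinatorial bound from Lemma~\ref{lem:comb_kappa} with the polynomial time bound $\Es{T_k}\le Ck^{1/\aa}$ of Theorem~\ref{thm:main_smoothing} applied to the pendant subtree of $x$ --- we need that subtree to itself have $\kappa$ at least its own size, or to pay only a bounded number of bottlenecks, uniformly over $x$, which is where the precise structure of pendant subtrees in the $\lambda$-large regime (each pendant subtree is itself a subcritical-type tree, locally like the $\CC$ near a typical point, so its own internal bottleneck structure is controlled by Part~(1)) has to be invoked carefully.
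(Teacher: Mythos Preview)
Your overall strategy --- locate the $2$-edge-connected core, control the pendant subtrees hanging off it, and argue separately about a uniform vertex versus an optimally placed one --- matches the paper's, but several of the steps you take do not go through.

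\textbf{Part~(2), lower bound.} You pick $s$ ``on the core, say on one of the cycles''. For every fixed $\lambda\in\R$ the surplus $S_1^n(\lambda)$ equals $0$ with probability converging to $r_0(\lambda)>0$, in which case $\CC$ is a tree and there is no core and no cycle. Your argument therefore fails on an event of uniformly positive probability, so it cannot give the required bound for every $\eps>0$. The paper avoids this by ignoring the surplus entirely for the lower bound: it uses that $\CC$ always contains a spanning tree converging (after rescaling) to the CRT, and in the CRT any non-leaf point separates the tree into at least two pieces of positive mass. Choosing $s$ to be such a branch point of the spanning tree gives $\kappa(\CC,s)\ge\delta_0|\CC|$ regardless of whether cycles exist; this is also why $\delta_0$ comes out independent of $\lambda$, without any coalescent monotonicity argument.

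\textbf{Part~(1).} You propose to reduce to Theorem~\ref{thm:GW_smoothing}(1) via Benjamini--Schramm local convergence of $\CC$ to the Kesten tree. The paper explicitly notes (Subsection~\ref{ss.extopen}) that this local limit statement is not available in the literature for the critical window; the reference \cite{Addario-Berry2010-2} gives the Gromov--Hausdorff scaling limit, which says nothing about $O(1)$-sized neighbourhoods and hence cannot by itself yield tightness of $\kappa(\CC,\sigma)$. The paper instead conditions on vertex~$1$ belonging to the largest cluster, then on the exploration from~$1$ revealing a cluster of size at least $\delta n^{2/3}$; under the latter conditioning the exploration is exactly a Poisson$(np)$ GW tree conditioned to survive to large volume, so the proof of Theorem~\ref{thm:GW_smoothing}(1) applies directly, and a short inclusion-exclusion transfers the bound back to the ``vertex~$1$ is in the largest cluster'' conditioning.

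\textbf{Part~(3).} You assert that the kernel is $2$-edge-connected and has $\Theta(\lambda^{3/2})$ cycles. The surplus is in fact $\Theta(\lambda^3)$, and more importantly the kernel, being a random $3$-regular multigraph on $2(k-1)$ vertices, is not automatically $2$-edge-connected: it can have bridges. What the paper proves (its Lemma on $\mu_k$) is that, as $k\to\infty$, the kernel with high probability has no cut-edge with at least $\eps k$ vertices on either side --- an expander-type statement --- and combines this with the Dirichlet-maximum lemma to conclude that every subgraph hanging off the $2$-connected part has mass $<\eps|\CC|$. Without the expander input, small CRT pieces alone do not prevent the kernel from splitting into two macroscopic halves across a single edge, which would cap $\kappa$ well below $(1-\eps)|\CC|$. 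Finally, your concern about power-law bottlenecks in reaching the core is unnecessary: $k_x(s)$ counts infected vertices, not time, so once every pendant subgraph has at most $\eps|\CC|$ vertices, the first core vertex $s$ hit from any $x$ satisfies $k_x(s)<\eps|\CC|$ deterministically.
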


\subsection{Extensions and open questions}\label{ss.extopen}

According to Lemma \ref{lem:comb_kappa}, the first bottleneck appears in the place where the graph $G$ has a bridge. It is a simple corollary that if the graph is 2-edge-connected, then it contains no temporal bottlenecks for the infection, which is specifically true for a cycle. 

It is possible to extend the result of Theorem~\ref{thm:main_smoothing} to the case of $\aa\in (1/d,1)$, when $d>2$, by defining for each finite connected graph $G$ with a root $s$ the number 
$$\kappa_d(G,s)=\min\limits_{e_1,\dots,e_{d-1}\in E(G)}|\CC(s,G\backslash \{e_1,\dots,e_{d-1}\})|.$$ 
Then, for every $k\leq \kappa_d(G,s)$, the expected time to infect $k$ vertices satisfies $\E T_k \leq C k^{1/\alpha}$. In order to obtain this result, one would first extend Lemma~\ref{lem.expectxy-t} to
$$\EB{\min\{X,Y_1-t,\dots,Y_{d-1}-t\} \md Y_1,\dots,Y_{d-1}>t}\asymp t^{1-\aa},$$
which has the same order of magnitude as before. Then, in Section~\ref{ss.kappa_result}, one should use the delayed process with always having $d$ active edges in the front, and the $Q$ process with $d-1$ always old edges, getting the bound $\E T_k \leq C k^{1/\alpha}$ at the end. However, it is less clear how to extend our speed-up results. In this case, the bottleneck-free subgraphs are $d$-edge-connected, hence adding an extra edge and creating a cycle may not always have a speed up effect, and more sophisticated constructions would need to be introduced. 

We have explained above why it is natural to consider {\it critical} random trees. Nevertheless, one could also condition a {\it subcritical} GW tree to be large, and then, depending on the offspring distribution, one would get a tree either very similar to a long path (with tiny trees hanging from it), or to a large star-graph (many spikes, all of bounded depth) \cite{Janson2012}, so in fact our two introductory examples basically cover subcritical trees, already. For {\it supercritical} GW trees, our questions are already not interesting: on the one hand, adding a single edge to an infinite tree has certainly an insignificant effect; on the other hand, as discussed above, for supercritical graphs there is a quite universal and fast spreading behaviour  \cite{bhamidi2014universality}, where a large tail for $\xi$ does not produce new phenomena.

About the models we have considered, one could also ask more detailed questions. Is there a {\it distributional scaling limit} for the process on a critical Galton-Watson tree conditioned to have depth at least $N$? Time should be scaled by $N^{1/\alpha}$, as for an iid sum of these variables (FPP on a half-line). And what is the scaling limit on the line segment $[-N/2,N/2]$, started from the origin, time scaled by $N^{1/\alpha}$?

In the proof of Theorem~\ref{thm:ER}, part~(1), we are basically showing that the largest cluster of $G\big(n,\frac{1}{n}+\frac{\lambda}{n^{4/3}}\big)$, viewed from a random vertex, resembles a critical GW tree conditioned to have large depth, and then we can use part~(1) of Theorem~\ref{thm:GW_smoothing}. With more work, one should be able to show that the {\it Benjamini-Schramm limit of the largest cluster} is in fact a GW tree with Poisson$(1)$ offspring, conditioned to be infinite. Surprisingly, we have not found a proof of this in the literature. Beyond the upper end of the critical window, the results of \cite{AnatomyYoung} do imply this claim.

\subsection{Notation}

Given two random variables $X_1$ and $X_2$, taking values in some spaces $\mathcal{X}_1$ and $\mathcal{X}_2$, the random variable $(\hat{X_1},\hat{X_2})$ with values in $\mathcal{X}_1\times\mathcal{X}_2$ is a \textit{coupling} of them if $\hat{X_i}$ has the same marginal distribution as $X_i$ for $i=1,2$, i.e., if $\Ps{\hat{X_i}\in E}=\Ps{X_i \in E}$ for all measurable subsets $E$ of $\mathcal{X}_i$.

If $X$ and $Y$ are two discrete random variables on the same space $\mathcal{X}$,  with distributions
$$\Ps{X=x}=p_x,\quad \Ps{Y=y}=q_y,\quad x,y\in\mathcal{X},$$
then the \textit{total variation distance} between the measures $p$ and $q$ is
$$d_{TV}(p,q)=\frac{1}{2}\sum_{x}|p_x-q_x|=\inf \Big\{\Ps{\hat{X}\neq\hat{Y}} : (\hat{X},\hat{Y})\text{ is a coupling of } X\text{ and }Y\Big\}\,.$$
The second equality is usually referred to as Strassen's theorem (see e.g. \cite{Strassen1965} or \cite{RemcoVDH}, p.59).

If $X$ and $Y$ are two real-valued random variables, we say that $Y$ \textit{stochastically dominates} $X$, denoted as $X\preceq Y$, if for every $x\in \R$, the following inequality holds:
$$\Ps{X\geqslant x}\leqslant \Ps{Y\geqslant x}.$$
Stochastic domination $X\preceq Y$ implies $\Es{X}\leqslant \Es{Y}$. We use the fact \cite{Lindvall1999} that $X$ stochastically dominates $Y$ if and only if there is a coupling $(\hat{X},\hat{Y})$ of $X$ and $Y$ such that $\Ps{\hat{X}\geqslant\hat{Y}}=1$.

If $f$ and $g$ are real-valued functions defined on $\N$ or $\R$, then $f\asymp g$ and $f=\Theta(g)$ mean that there exist constants $0<c<C<\infty$ such that $c f(x) < g(x) < C f(x)$ for all $x$. We write $f\sim g$, as $x\to x_0$, to mean that $\lim_{x\to x_0} f(x)/g(x)=1$, and write $f=o(g)$ to mean  that $\lim_{x\to x_0} f(x)/g(x)=0$.


\section{Two basic examples}\label{s.two}


\subsection{Spreading on a cycle}\label{ss.spreading_Z}

Consider the example of SI spreading with power law weights on the cycle $C_n$ with $n$ vertices, with $\aa\in(1/2,1)$. The process on the cycle can be understood via the process on the integer line $\Z$: the time $T_k$ of the infection of the $k$'th vertex has the same distribution on $\Z$ as on $C_n$, for every $k\leqslant n$. On $\Z$, we will denote by $X_i$ the power law distributed random weight on the edge $(i-1,i)$ if $i>0$, and on the edge $(i,i+1)$  if $i<0$. 

\begin{proposition}\label{prop:Z}
For the SI spreading process $(T_k)_{k=1}^{\infty}$ on $\Z$ with power law weights $\aa\in(1/2,1)$, the expected time to infect $k$ vertices satisfies
$$\Es{T_k}\asymp k^{1/\aa},$$
where the constant factors depend only on $\aa$.
\end{proposition}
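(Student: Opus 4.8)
The plan is to analyze the SI process on $\Z$ directly by tracking the ``frontier'' of the infection, which at any time consists of exactly two active edges — one extending to the right, one to the left — and to show that the time increments between successive infections behave, in expectation, like the increments of an i.i.d.~power-law sum, up to constant factors. For the upper bound I would first prove $\Es{T_k} \leq C k^{1/\aa}$. The key point is that when $j$ vertices have been infected, the $(j+1)$-st infection time increment is at most $\min\{X_r, X_\ell\}$ where $X_r, X_\ell$ are the (possibly already partially elapsed) weights on the two frontier edges; but because the process is non-Markovian, one of these edges may be ``old'' — it may have been exposed for some time $t>0$ already. The right tool is a lemma of the type flagged later in the excerpt (``Lemma~\ref{lem.expectxy-t}''): $\EB{\min\{X, Y-t\} \md Y > t} \asymp t^{1-\aa}$, which says that conditioning on an old edge having survived time $t$ only costs a factor comparable to $t^{1-\aa}$, not a divergent one. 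Summing these controlled increments and using that at step $j$ the elapsed time is itself $O(j^{1/\aa})$ by induction should close the bound; I would set this up as an induction on $k$, feeding the current estimate for $\Es{T_j}$ into the bound for the increment $\Es{T_{j+1}-T_j}$.

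For the matching lower bound $\Es{T_k} \geq c k^{1/\aa}$, the natural approach is to note that to infect $k$ vertices the infection must in particular traverse a path of length roughly $k/2$ on one side of the origin (or the two sides split the $k$ vertices, so one side has at least $k/2$). On that side the infection time is at least the sum of $\lceil k/2 \rceil$ i.i.d.~$\pow(\aa)$ weights $X_1 + \cdots + X_{\lceil k/2\rceil}$, since FPP times only increase when we restrict to a subgraph (here, a ray). So it suffices to show that an i.i.d.~sum of $m$ heavy-tailed variables with tail exponent $\aa \in (1/2,1)$ has expectation $\asymp m^{1/\aa}$. The lower bound $\E[X_1+\cdots+X_m] \geq \E[\max_i X_i] \geq c\, m^{1/\aa}$ is immediate from the tail: $\Pr[\max_i X_i > t] = 1 - (1 - t^{-\aa})^m \asymp 1$ for $t \asymp m^{1/\aa}$. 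The upper bound on the i.i.d.~sum — needed for the other direction if one prefers to route the upper bound through the ray as well — follows by truncation at level $m^{1/\aa}$: the truncated mean is $O(m^{1/\aa - 1})$ (here $\aa < 1$ makes the truncated first moment grow) and the contribution of the exceedances is $O(m \cdot m^{1/\aa}\cdot m^{-1}) = O(m^{1/\aa})$.

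The main obstacle I anticipate is the upper bound's handling of the non-Markovian ``aging'' of the frontier edge that is \emph{not} the one just crossed. When the infection makes a step on the right, the left frontier edge has been aging the whole time, and its age is correlated with the entire history of the right side. Making precise that this correlation does not blow up the expected increment — i.e., that one can still apply $\EB{\min\{X,Y-t\}\md Y>t}\asymp t^{1-\aa}$ with $t$ of the right order — requires either a careful conditioning argument or, cleaner, a coupling with a ``delayed'' process in which one artificially keeps resetting or bounding the age of the lagging edge. I expect the cleanest route is: define an auxiliary process that at each step has two ``fresh'' edges plus a deterministic delay equal to the current total elapsed time, show it stochastically dominates the true $T_k$, and then the auxiliary process is a genuine Markov-like recursion to which the increment lemma and the induction apply directly. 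Everything else — the i.i.d.~sum estimates, the reduction to a ray for the lower bound, transferring from $\Z$ to $C_n$ — is routine.
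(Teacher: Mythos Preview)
Your upper bound strategy is sound but takes a different route from the paper's proof of this particular proposition. The paper does \emph{not} track the aging frontier here; instead it uses the deterministic sandwich
\[
\min\{S_{\lfloor k/2\rfloor},\,S^*_{\lfloor k/2\rfloor}\} \leq T_{k+1} \leq \min\{S_k,\,S^*_k\},
\]
where $S_k=\sum_{i=1}^k X_i$ and $S^*_k=\sum_{i=-1}^{-k}X_i$, and then shows $\Es{\min\{S_k,S^*_k\}}\asymp k^{1/\aa}$ via convergence of $S_k/k^{1/\aa}$ to an $\aa$-stable law together with a uniform tail bound and dominated convergence. Your frontier-plus-increment-lemma approach is exactly what the paper later deploys for \emph{general} graphs (Section~\ref{s.determ}, via the ``$Q$-process'' and Lemma~\ref{lem.expectxy-t}), so it would work, but for $\Z$ the paper's direct route entirely avoids the aging/correlation obstacle you flag.

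Your lower bound, however, has a genuine gap. You write that $T_k$ is bounded below by a single i.i.d.~sum $X_1+\cdots+X_{\lceil k/2\rceil}$ on ``that side''. But which side carries at least $k/2$ vertices is \emph{random} and depends on the weights --- in fact it is precisely the side on which the partial sums are smaller --- so you only get $T_{k+1}\geq \min\{S_{\lfloor k/2\rfloor},S^*_{\lfloor k/2\rfloor}\}$, not domination by a fixed sum. Relatedly, your assertion that ``an i.i.d.~sum of $m$ heavy-tailed variables with tail exponent $\aa\in(1/2,1)$ has expectation $\asymp m^{1/\aa}$'' is false: since $\aa<1$, each $X_i$ has infinite mean and hence $\Es{S_m}=\infty$. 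The bound $\Es{S_m}\geq\Es{\max_i X_i}\geq c\,m^{1/\aa}$ is of course true but does nothing for $\Es{\min\{S_m,S^*_m\}}$. What you actually need is a distributional lower bound such as $\Ps{S_m\geq c\,m^{1/\aa}}\geq c'>0$ uniformly in $m$ (then independence of the two sides gives $\Ps{\min\geq c\,m^{1/\aa}}\geq (c')^2$ and hence the expectation bound). The paper extracts this from the stable limit; an elementary alternative is $\Ps{S_m\geq m^{1/\aa}}\geq\Ps{\max_{i\leq m}X_i\geq m^{1/\aa}}=1-(1-m^{-1})^m\to 1-e^{-1}$.
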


\begin{proof}
Let $S_k = \sum\limits_{i=1}^{k}X_i$ and  $S^*_k = \sum\limits_{i=-1}^{-k}X_i$. Note that
$$ \min\{S_{\lfloor k/2 \rfloor}, S^*_{\lfloor  k/2 \rfloor }\}\leqslant T_{k+1} \leqslant \min\{S_k, S^*_k\}.$$ 
Then it is enough to prove that $\Es{\min\{S_k, S^*_k\}}\asymp k^{1/\aa}$. It is well-known (see \cite[Theorem 3.7.2]{Durrett2010}) that the sum $S_k$ as $k\to\infty$ is in the domain of attraction of the stable law $Y$ with the same parameter $\aa$:
$$\Ps{S_k/k^{1/\aa}>t}\xrightarrow[k\to\infty]{} \Ps{Y>t}.$$
Denote $\overline{S}_k=S_k/k^{1/\aa}$. The convergence is given via the convergence of characteristic functions, where the limit characteristic function is given by \cite{Durrett2010}: 
\begin{equation}\label{eq:char_fun}
\phi_Y(t)=\lim_{k\to\infty}\phi_{\overline{S}_k}(t)=C_{\sign(t)} \exp(-b |t|^{\aa}),
\end{equation}
where the constants, $C_{-1}=\overline{C}_{1}$ and $b>0$, depend on $\aa$. Hence, in the bounded interval $|t|<1$ the convergence in \eqref{eq:char_fun} is uniform in $t$, thus we can write
$$\phi_{\overline{S}_k}(t)=C_{\sign(t)} \exp(-b |t|^{\aa})(1+o(1)),$$
where $o(1)\to 0$ as $k\to\infty$ uniformly in $|t|<1$. Using the relation between the tail distribution and the characteristic function, given by the following inequality \cite[Eq. (3.3.1)]{Durrett2010}:
$$\Ps{|X|>2/u}\leqslant \frac{1}{u}\int\limits_{-u}^{u} \left(1-\phi_{X}(t) \right) dt,$$
where $X$ is a random variable with characteristic function $\phi_{X}(t)$, we derive that when $t$ is sufficiently large then for all $k$,
\begin{equation}
\begin{split}
\Ps{\overline{S}_k>t}\leqslant t \int\limits_{-2/t}^{2/t} 1- (1+o(1)) \, C_{\sign(t)} \exp\big(-b|x|^{\aa}\big) \, dx < t\int\limits_{-2/t}^{2/t}C_2|x|^{\aa} dx = C_3 t^{-\aa},
\end{split}
\end{equation}
where $C_3>0$ is constant that depends on $\aa$. Thus we have for sufficiently large $t$:
$$\Ps{\min\{S_k, S^*_k\}/k^{1/\aa}>t}\leqslant C_4 t^{-2\aa},$$
where $C_4>0$ is constant that depends on $\aa$. Since $S_k$ is positive then we can find a random variable $Z$ with power law tail with exponent $2\aa$ such that $|\min\{S_k, S^*_k\}/k^{1/\aa}|<Z$ a.s. for all $k>0$, and thus by Dominated Convergence Theorem for $\aa>1/2$ we have convergence of expectations  
$$\Es{\min\{S_k, S^*_k\}/k^{1/\aa}}\xrightarrow[k\to\infty]{} \Es{\min\{Y,Y^*\}}.$$
where $Y,Y^*$ are stable with parameter $\aa$. The minimum of $Y,Y^*$ has power law tail with exponent $2\aa$ thus has finite expectation and we have:
$$\Es{\min\{S_k, S^*_k\}/k^{1/\aa}}\asymp 1,$$
for all $k>0$, which implies the statement of the proposition.
\end{proof}


\subsection{Spreading on a star}\label{ss.star}

The star graph $\ST_n$ consists of a distinguished root vertex $0$ and vertices $\{1,2,\dots, n-1\}$ attached to it. On $\ST_n$, we consider the SI spreading process $T=(T_k)_{k=1}^n$ started from the root, with power law distributed random weights with $\aa\in(1/2,1)$, denoted as $X_1, X_2, \dots, X_{n-1}$. 

\begin{proposition}\label{prop:star}
On the graph  $\ST_n$ with $n\geqslant 2$, the expected time to infect $k$ vertices, for $k\leqslant n-1$, is bounded by
$$\Es{T_k}\leqslant C_\alpha k^{1/\aa},$$
where $C_\alpha>0$ is a constant that depends only on $\aa$. For $k=n-1$, in order to infect all but one vertices, we have
$$\Es{T_{n-1}}\asymp n^{1/\aa},$$
with the implicit constant factors depending on $\alpha$.
\end{proposition}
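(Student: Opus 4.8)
Since the root $0$ is infected at time $0$, every edge $\{0,i\}$ of $\ST_n$ is active from the start, so leaf $i$ is infected exactly at time $X_i$. Hence $T_1=0$, and for $2\leqslant k\leqslant n$ the time $T_k$ is the $(k-1)$'th smallest of the i.i.d.\ weights $X_1,\dots,X_{n-1}$, i.e.\ the order statistic $X_{(k-1)}$ among $m:=n-1$ i.i.d.\ $\pow(\aa)$ variables. So the whole proposition is a statement about expectations of order statistics of power-law variables, and it will be enough to understand the \emph{second largest} of such variables.

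The plan is to first prove the key estimate: if $Y^{(m)}$ denotes the second largest of $m\geqslant 2$ i.i.d.\ $\pow(\aa)$ variables with $\aa\in(1/2,1)$, then $\Es{Y^{(m)}}\asymp m^{1/\aa}$, with constants depending only on $\aa$. Write $\Es{Y^{(m)}}=\int_0^\infty\Ps{Y^{(m)}>t}\,dt$ and $p(t)=\Ps{\xi>t}=1\wedge(t/t_0)^{-\aa}$; then $\Ps{Y^{(m)}>t}$ is the probability that at least two of the $m$ variables exceed $t$. For the upper bound I would cut the integral at the scale $t^\ast=t_0\binom{m}{2}^{1/(2\aa)}\asymp m^{1/\aa}$, at which $\binom{m}{2}p(t)^2=1$: for $t\leqslant t^\ast$ bound the integrand by $1$, contributing $\asymp m^{1/\aa}$; for $t\geqslant t^\ast$ use the union bound over pairs, $\Ps{Y^{(m)}>t}\leqslant\binom{m}{2}p(t)^2=\binom{m}{2}(t/t_0)^{-2\aa}$, and integrate — this is exactly where $2\aa>1$ is used to make the tail integrable, and the integral evaluates to $t^\ast/(2\aa-1)$, again $\asymp m^{1/\aa}$. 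For the matching lower bound, on the interval $t\in[t_0(m/2)^{1/\aa},\,t_0m^{1/\aa}]$, which has length $\asymp m^{1/\aa}$, we have $p(t)\in[1/m,2/m]$, so the number of variables exceeding $t$ is $\mathrm{Bin}(m,p(t))$ with mean in $[1,2]$, and the elementary bound $\Ps{Y^{(m)}>t}\geqslant\binom{m}{2}p(t)^2(1-p(t))^{m-2}\geqslant c_\aa>0$ holds uniformly there (for $m$ large; bounded $m$ is absorbed into the constants). Integrating over this interval gives $\Es{Y^{(m)}}\geqslant c'_\aa m^{1/\aa}$.

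Given the key estimate, both parts follow quickly. The standard coupling that reveals the weights one at a time shows that the $j$'th smallest order statistic is pointwise non-increasing in the number of samples; hence for $2\leqslant k\leqslant n-1$ (so that $m=n-1\geqslant k$),
$$\Es{T_k}=\Es{X_{(k-1)}^{(m)}}\leqslant\Es{X_{(k-1)}^{(k)}}=\Es{Y^{(k)}}\leqslant C_\aa k^{1/\aa},$$
while $k=1$ is trivial since $T_1=0$. For $k=n-1$, the variable $T_{n-1}=X_{(n-2)}^{(n-1)}$ is exactly the second largest of the $n-1$ weights, so $\Es{T_{n-1}}=\Es{Y^{(n-1)}}\asymp(n-1)^{1/\aa}\asymp n^{1/\aa}$.

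The only genuinely delicate point is the key estimate. The upper bound comes out as $m^{1/\aa}$ — rather than larger or infinite — precisely because $\aa>1/2$, so that $\int^\infty\binom{m}{2}(t/t_0)^{-2\aa}\,dt$ converges; and for the lower bound one must exhibit an interval of $t$-values of length of order $m^{1/\aa}$ on which two of the weights simultaneously exceed $t$ with probability bounded away from $0$. Everything else is a short explicit computation with $p(t)=1\wedge(t/t_0)^{-\aa}$.
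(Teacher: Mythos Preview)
Your proof is correct and follows essentially the same route as the paper: identify $T_k$ as the $(k-1)$'th order statistic of $n-1$ i.i.d.\ $\pow(\alpha)$ variables, use monotonicity in the sample size to reduce to the second largest of $k$ (the paper uses $k+1$) samples, and then show $\E[Y^{(m)}]\asymp m^{1/\alpha}$ by splitting the tail integral at scale $m^{1/\alpha}$ and using that $2\alpha>1$ makes $\binom{m}{2}t^{-2\alpha}$ integrable.

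The one place where the execution differs is the lower bound. The paper writes out the exact tail $\Ps{X^{k+1}_{(k)}>t}=1-(k+1)(1-t^{-\alpha})^k t^{-\alpha}-(1-t^{-\alpha})^{k+1}$, plugs in the second-order bound $(1-t^{-\alpha})^k<1-kt^{-\alpha}+\tfrac12 k^2 t^{-2\alpha}$, and integrates the resulting polynomial in $t^{-\alpha}$ from $k^{1/\alpha}$ to $\infty$. You instead exhibit the window $t\in[t_0(m/2)^{1/\alpha},\,t_0 m^{1/\alpha}]$ of length $\asymp m^{1/\alpha}$ on which $\mathrm{Bin}(m,p(t))$ has mean in $[1,2]$ and hence $\Ps{\text{at least two exceed }t}\geq c_\alpha$. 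Your version is arguably cleaner and more probabilistic, avoiding the algebra, while the paper's gives slightly more explicit constants; both are elementary and short.
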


\begin{proof}
Denote by $X^{n-1}_{(1)}<\dots<X^{n-1}_{(n-1)}$ the order statistics of $X_1,\dots, X_{n-1}$. Then we have $T_{k+1}=X^{n-1}_{(k)}$, and it is obvious that
$$X^{n-1}_{(k)}\preceq X^{k+1}_{(k)},$$
for $k\leqslant n-2$, hence the second statement of the theorem implies the first one.

It is straightforward to calculate the tail distribution of $X^{k+1}_{(k)}$:
\begin{equation}\label{eq:star1}
\begin{aligned}
\Pb{X^{k+1}_{(k)}>t} &= 1-\Ps{X^{k+1}_{(k)}<t}\\
& = 1-(k+1)\,\Ps{X_1,\dots,X_k<t, X_{k+1}>t} - \Ps{X_1,\dots,X_{k+1}<t}\\
& = 1-(k+1)(1-t^{-\aa})^{k}t^{-\aa} - (1-t^{-\aa})^{k+1}. 
\end{aligned}
\end{equation} 

To get an upper bound on $\Es{X^{k+1}_{(k)}}$, we integrate~\eqref{eq:star1} over $t>0$, using the bound $(1-t^{-\aa})^k > 1- kt^{-\aa}$  for $t>k^{1/\aa}$, and the bound $\Ps{X^{k+1}_{(k)}>t}\leqslant 1$ for $t<k^{1/\aa}$. After cancellations,
\begin{equation*}
\begin{split}
\Eb{X^{k+1}_{(k)}} &\leqslant \int\limits_{0}^{k^{1/\aa}} 1\, dt + (k+1)k\int\limits_{k^{1/\aa}}^{\infty}t^{-2\aa}dt \\
& = k^{1/\aa} + (k+1)k\, \frac{k^{1/\aa-2}}{2\aa-1} \leqslant C_\alpha k^{1/\aa}.
\end{split}
\end{equation*}

In order to get a lower bound, we use the bound $(1-t^{-\aa})^k < 1- kt^{-\aa}+\frac{1}{2}k^2 t^{-2\aa}$ for $t>k^{1/\aa}$, and the bound $\Ps{X^{k+1}_{(k)}>t}\geq 0$ for $t<k^{1/\aa}$:
\begin{equation*}
\begin{split}
\Eb{X^{k+1}_{(k)}} &\geqslant \left( k(k+1)-\frac{(k+1)^2}{2} \right) \int\limits_{k^{1/\aa}}^{\infty}t^{-2\aa}dt-\frac{(k+1)k^2}{2} \int\limits_{k^{1/\aa}}^{\infty}t^{-3\aa}dt \\
& \sim\frac12\left( \frac{1}{2\aa-1}  - \frac{1}{3\aa-1} \right) k^{1/\aa}\geqslant c_\alpha k^{1/\aa},
\end{split}
\end{equation*}
with some $c_\alpha>0$. Thus, $$\Eb{X^{k+1}_{(k)}}\asymp k^{1/\aa},$$ which finishes proof of the theorem.
\end{proof} 


\section{General deterministic graphs}\label{s.determ}

\subsection{Preliminary lemmas}\label{ss.detprelim}

We present here two lemmas that will be used in tandem in the proof of Lemma~\ref{thm.expBound1-old&1-new}.

\begin{lemma}\label{lem.boundB_k}
Let $(b_n)_{n=1}^{\infty}$ be a positive sequence that satisfies the following recursive inequality for some $C>0$ and $0<\aa<1$:
\begin{equation}\label{eq.onb_k}
b_{n+1}\leqslant b_n + C b_n^{1-\aa}.
\end{equation}
Then
$$b_n\leqslant dn^{1/\aa},$$
with 
$d=\max\{b_1,(\aa C)^{1/\aa}\}$.
\end{lemma}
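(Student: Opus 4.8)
The plan is to establish $b_n \le d\,n^{1/\aa}$ by induction on $n$, with $d = \max\{b_1,(\aa C)^{1/\aa}\}$ as given. The base case $n=1$ is immediate, since $b_1 \le \max\{b_1,(\aa C)^{1/\aa}\} = d = d\cdot 1^{1/\aa}$.

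For the inductive step I would first record an elementary monotonicity fact: the function $f(x) = x + C x^{1-\aa}$ is strictly increasing on $(0,\infty)$, because $f'(x) = 1 + C(1-\aa)x^{-\aa} > 0$ for $0<\aa<1$. Assuming $0 < b_n \le d\,n^{1/\aa}$, the recursion \eqref{eq.onb_k} then yields
$$b_{n+1} \;\le\; f(b_n) \;\le\; f\!\left(d\,n^{1/\aa}\right) \;=\; d\,n^{1/\aa} + C\,d^{1-\aa}\,n^{(1-\aa)/\aa}.$$
So it remains to verify $d\,n^{1/\aa} + C\,d^{1-\aa}\,n^{(1-\aa)/\aa} \le d\,(n+1)^{1/\aa}$, which after dividing by $d$ and using $(1-\aa)/\aa = 1/\aa - 1$ is equivalent to
$$C\,d^{-\aa}\,n^{1/\aa - 1} \;\le\; (n+1)^{1/\aa} - n^{1/\aa}.$$

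The key point for the right-hand side is convexity: since $1/\aa > 1$, the map $x \mapsto x^{1/\aa}$ is convex, so by the mean value theorem $(n+1)^{1/\aa} - n^{1/\aa} = \tfrac{1}{\aa}\,\xi^{1/\aa-1}$ for some $\xi\in(n,n+1)$, and hence $(n+1)^{1/\aa} - n^{1/\aa} \ge \tfrac{1}{\aa}\,n^{1/\aa-1}$. It therefore suffices to have $C\,d^{-\aa} \le 1/\aa$, i.e.\ $d^{\aa} \ge \aa C$, which holds precisely because $d \ge (\aa C)^{1/\aa}$. This completes the induction and the proof.

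There is no genuine obstacle in this argument; the two places that need a moment's care are (i) checking that $f$ is increasing, so that the inductive hypothesis may legitimately be substituted into $f(b_n)$, and (ii) the convex lower bound on the discrete increment $(n+1)^{1/\aa} - n^{1/\aa}$ — this is exactly the estimate that forces the threshold constant $(\aa C)^{1/\aa}$, and also exactly where the hypothesis $\aa<1$ (equivalently $1/\aa>1$) is used.
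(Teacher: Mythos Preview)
Your proof is correct and follows essentially the same inductive strategy as the paper, hinging on the same key inequality $d \ge (\aa C)^{1/\aa}$. The only cosmetic difference is that you do a direct one-step induction (bounding $b_{n+1}$ from $b_n$ alone via monotonicity of $x\mapsto x+Cx^{1-\aa}$ and the convex lower bound $(n+1)^{1/\aa}-n^{1/\aa}\ge \tfrac{1}{\aa}n^{1/\aa-1}$), whereas the paper uses a strong-induction telescoping sum $b_{n+1}-b_1\le \sum_{k=1}^{n} C b_k^{1-\aa}$ and bounds it by the integral $\int_1^{n+1} C d^{1-\aa}x^{1/\aa-1}\,dx$; these are the same calculus fact packaged differently, and your version is arguably a bit cleaner.
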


\begin{proof}
We prove the statement by induction. By definition, the statement holds for $b_1\leqslant d$. Suppose the statement holds for some $n>1$: for any $k$ with $1\leqslant k\leqslant n$, we have $b_k\leqslant dk^{1/\aa}$. Now rewrite~\eqref{eq.onb_k} as
$$b_{k+1}-b_k \leqslant C b_k^{1-\aa}.$$
Making a telescopic sum, then using the induction hypothesis and bounding the sum with an integral, we obtain 
\begin{equation}
\begin{split}
b_{n+1}-b_1 &\leqslant \sum\limits_{k=1}^{n} C b_k^{1-\aa} \leqslant \sum\limits_{k=1}^{n} C d^{1-\aa} k^{1/\aa-1}\leqslant \int\limits_{1}^{n+1} C d^{1-\aa}x^{1/\aa-1}dx\\
&=\aa C d^{1-\aa}\left((n+1)^{1/\aa}-1\right) \leqslant d\left((n+1)^{1/\aa}-1\right).
\end{split}
\end{equation}
Adding $b_1\leqslant d$ to this inequality, we arrive at $b_{n+1}\leqslant d(n+1)^{1/\aa}$, as desired.
\end{proof} 

\begin{lemma}\label{lem.expectxy-t}
Let $X$ and $Y$ be i.i.d. power law distributed random variables with $\aa\in (1/2,1)$. Then, for any $t>1$:
$$\Es{\min\{X,Y-t\}|Y>t}\asymp t^{1-\aa},$$
with the constant factors depending on $\alpha$.
\end{lemma}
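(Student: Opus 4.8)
Normalize so that $t_0=1$, i.e.\ $\Ps{\xi>s}=1\wedge s^{-\aa}$, and in particular $\xi\geqslant 1$ a.s. First I would compute the conditional tail of the overshoot: for $s\geqslant 0$,
$$\Ps{Y-t>s\,\md\,Y>t}=\frac{\Ps{Y>t+s}}{\Ps{Y>t}}=\Big(1+\tfrac{s}{t}\Big)^{-\aa}.$$
Let $Z$ denote a random variable with this law, taken independent of $X$; then $\min\{X,Y-t\}$ conditioned on $\{Y>t\}$ has the law of $\min\{X,Z\}$, and by the tail (layer-cake) formula together with independence,
$$\Es{\min\{X,Y-t\}\,\md\,Y>t}=\int_0^\infty \Ps{X>s}\,\Ps{Z>s}\,ds=\int_0^\infty \big(s^{-\aa}\wedge 1\big)\Big(1+\tfrac{s}{t}\Big)^{-\aa}\,ds.$$

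Next I would split this integral at $s=1$. On $[0,1]$ the integrand lies between $2^{-\aa}$ (since $t>1$ gives $(1+s/t)^{-\aa}\geqslant 2^{-\aa}$) and $1$, so this part contributes a quantity that is $\Theta(1)$, uniformly in $t>1$. On $[1,\infty)$ the integrand is $s^{-\aa}(1+s/t)^{-\aa}$; substituting $s=tu$ turns this into
$$t^{1-\aa}\int_{1/t}^{\infty} u^{-\aa}(1+u)^{-\aa}\,du.$$
The key analytic point is that $I_\aa:=\int_0^\infty u^{-\aa}(1+u)^{-\aa}\,du$ is finite and strictly positive: integrability at $0$ holds because $\aa<1$, and integrability at $\infty$ (where the integrand decays like $u^{-2\aa}$) holds because $\aa>1/2$ — this is precisely where the hypothesis $\aa\in(1/2,1)$ is used. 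Hence for every $t>1$,
$$0<\int_1^\infty u^{-\aa}(1+u)^{-\aa}\,du\ \leqslant\ \int_{1/t}^\infty u^{-\aa}(1+u)^{-\aa}\,du\ \leqslant\ I_\aa<\infty,$$
so the $[1,\infty)$ part is $\Theta\!\left(t^{1-\aa}\right)$ with constants depending only on $\aa$.

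Finally I would add the two pieces: the total is $\Theta(1)+\Theta(t^{1-\aa})=\Theta(t^{1-\aa})$, since $t^{1-\aa}\geqslant 1$ for $t\geqslant 1$, which is the claimed $\asymp t^{1-\aa}$. There is essentially no serious obstacle here; the only things to be careful about are (i) tracking that the $\Theta(1)$ contribution from small $s$ is genuinely lower order and does not affect the conclusion, and (ii) verifying the finiteness and positivity of $I_\aa$ from the two-sided bound on $\aa$, which is the one place the restriction $\aa\in(1/2,1)$ is essential (and which foreshadows the $\aa>1/d$ condition needed for the $d$-edge extension mentioned in the introduction).
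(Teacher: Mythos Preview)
Your proof is correct and follows essentially the same route as the paper: compute the conditional tail, apply the layer-cake formula, split at $s=1$, and after the substitution $s=tu$ reduce the main term to $t^{1-\aa}$ times an integral whose convergence at $0$ and at $\infty$ uses $\aa<1$ and $\aa>1/2$ respectively. The only cosmetic difference is that the paper splits the integral $\int_{1/t}^\infty u^{-\aa}(1+u)^{-\aa}\,du$ once more at $u=1$ to bound each piece explicitly, whereas you argue directly that $I_\aa=\int_0^\infty u^{-\aa}(1+u)^{-\aa}\,du$ is finite and positive; your version is marginally cleaner.
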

\begin{proof}
The conditional tail distribution of the minimum is the following:
\begin{align*}
\Pb{\min\{X,Y-t\}>s \md Y>t} &= \frac{\Ps{X>s,Y>t+s}}{\Ps{Y>t}} = 
\begin{cases}
	\displaystyle s^{-\aa}\left(1+\frac{s}{t}\right)^{-\aa}, & \mbox{if } s>1;\\
	\displaystyle \left(1+\frac{s}{t}\right)^{-\aa}, & \mbox{if } 0<s<1.
\end{cases}
\end{align*}
Using the substitution $\displaystyle u=\frac{s}{t}$ we write the expected value as follows:
\begin{align*}
\Es{\min\{X,Y-t\}|Y>t} &= \int\limits_{0}^{\infty} \Pb{\min\{X,Y-t\}>s|Y>t} ds \\
 &= \int\limits_{0}^{1} \left(1+\frac{s}{t}\right)^{-\aa} ds + t^{-\aa} \int\limits_{1}^{\infty} \left(\frac{s}{t}\left(1+\frac{s}{t}\right)\right)^{-\aa}ds \\
 &= t\int\limits_{0}^{1/t} (1+u)^{-\aa}du + t^{1-\aa}\int\limits_{1/t}^{\infty} \left(u\left(1+u\right)\right)^{-\aa} du.
\end{align*}

In the first integral, $1\leqslant 1+u \leqslant 2$, hence that integral is $\asymp 1/t$ and the term is altogether $\asymp 1$. To calculate the second term, we split the interval of integration into two parts again:
\begin{align}\label{eq:techlem1}
t^{1-\aa}\int\limits_{1/t}^{\infty} \left(u\left(1+u\right)\right)^{-\aa}du = t^{1-\aa}\left[\int\limits_{1/t}^{1} \left(u\left(1+u\right)\right)^{-\aa}du + \int\limits_{1}^{\infty} \left(u\left(1+u\right)\right)^{-\aa}du\right].
\end{align}
The first integral on the RHS of \eqref{eq:techlem1} is less than $\int_0^1 u^{-\alpha} du = \frac{1}{1-\aa}$. 
The second integral can be bounded in the following way:
$$\int\limits_{1}^{\infty}(1+u)^{-2\aa}du\leqslant\int\limits_{1}^{\infty} \left(u\left(1+u\right)\right)^{-\aa}du\leqslant\int\limits_{1}^{\infty} u^{-2\aa}du,$$
which is clearly $\asymp \frac{1}{2\aa-1}$.
Summing up the three terms we have calculated, we have  proved the lemma. 
\end{proof}

\subsection{Model and main result}\label{ss.kappa_result}

We consider finite simple connected rooted graphs $G=(V,E,s)$ on $n$ vertices with $m$ edges. We denote by $V(G)$ the vertex set of $G$ and by $|G|$ the total size of it, or $|G|=|V|$. We call a path between $s$ and $t$ an \textit{$(s,t)$-path}. The \textit{graph distance} $d(s,t)$ between vertices $s$ and $t$ as the length of the shortest $(s,t)$-path.

Denote the weight, or the infection passage time, attached to the edge $e\in E$ as $X_e$, where $X_e$ is defined on the probability space $(\R_+,\F,\P)$. The probability space $\Omega=(\R_+^{m},\F^{m},\P^{m})$ is the space of all possible random assignments of weights to edges of the graph $G$ equipped with the product measure. Denote by $P(s,t)$ the shortest weighted $(s,t)$-path and by $|P(s,t)|$ the total weight of this path.  The $SI$ spreading process on the graph $G$ is a stochastic process $T=\{T_k:k\in\{1,\dots,n\}\}$, where $T_k$ is defined as the minimum over $H_k\in \GC_k$ of the maximum over vertices $t\in V(H_k)$ of the total weight of the shortest weighted $(s,t)$-path. In symbols,
$$T_k = \min_{H_k\subset \GC_k}\max_{t\in V(H_k)} |P(s,t)|,$$
where $\GC_k$ is the set of subtrees of $G$ on $k$ vertices with the same root $s$. We call an edge as \textit{occupied} at time $t$, if both end vertices are in the infected state $I$, \textit{unoccupied}, if both are in the susceptible state $S$, and \textit{active}, if one is in state $I$ and the other is in state $S$. 

The process $T$ is defined on the space $\Omega$, equipped with natural filtration $\F=\{\F_k:k\in\{1,\dots,n\}\}$. Denote the sample sequence $T(\omega)=\{T_k(\omega):k\in\{1,\dots,n\}\}$, where $\omega\in\Omega$. Each sample sequence $T(\omega),\, \omega\in\Omega$, defines an order $\epsilon_{T(\omega)}=(e^{\epsilon}_{1},e^{\epsilon}_{2},\dots,e^{\epsilon}_{m})$ on the edge set, in which the edges are occupied by the process. Given $T_k(\omega)$ we define for each $k$ the \textit{infection trail tree} $G(T_k(\omega))$ as the subtree from $\GC_k$ that consists of those edges that successfully passed the infection, i.e., the occupied edges on the shortest paths between the root $s$ and all $k-1$ other infected vertices. It may happen that at some time $T_k(\omega)$ two or more edges, incident to a newly infected vertex, become occupied at the same time. In this case we assume that first the edge on the shortest (weighted) path to the root is occupied, and the rest are occupied with respect to some fixed generic order $\overline{\epsilon}$ on the edge set to eliminate ambiguity. Also it may happen that some vertex has two or more shortest paths with equal total weight, but since the system is finite, this event happens with zero probability.

In the current framework, for each $\omega\in\Omega$ the spreading curve is the set of pairs $\{(\Es{T_k},k/n):1\leqslant k\leqslant n\}$. Our goal is to mathematically define the position of the first temporal bottleneck, responsible for the plateau on the spreading curve. Further we restrict ourselves to consideration of power law distributed weights with $\aa\in(1/2,1)$, however some lemmas consider broad class of weights.

\medskip
\noindent\textbf{First temporal bottleneck.} Remember we call an edge active at time $t$ if one of its incident vertices is in state $S$ and the other is in state $I$. In other words, an active edge is an edge that currently transmits an infection. Let $\omega\in\Omega$ and the \textit{front of the epidemic} $F(T_k(\omega))$ be the set of edges, that are active at time $T_k(\omega)$, where $k\in\{1,\dots,n\}$, in the sample sequence $T(\omega)$. Define $\kappa(G,s)$ to be the \textit{maximal} number of vertices $k$ such that for each sample sequence $T(\omega)$ and for each $j<k$, the front $F(T_j(\omega))$ has at least two active edges. In other words, it is the {\it minimal} $k$ such that there exists $\omega\in\Omega$ with $F(T_k(\omega))$ having one or zero active edges. We say the active edge $e$ is \textit{old} and has age $\tau>0$ at time $t$, if the edge has become active at time $t-\tau$. If $\tau=0$, then an edge is called \textit{new}. We now prove that if there is a sample sequence $T(\omega)$ and a number $i$, for which the front of the epidemics $F(T_i(\omega))$ has one active edge, then there is a large plateau at this point on the spreading curve.
\begin{lemma}\label{lem:SI_inf_expect}
Let $G$ be a finite rooted graph and let $T$ be the $SI$ spreading process with absolutely continuous passage times $\xi$, such that $\Es{\xi}=\infty$. Let there exists an $\omega_0\in\Omega$ and a number $i\in\N$, such that the front $F(T_i(\omega_0))$ has one active edge. Then for each $j$, where $i+1\leqslant j\leqslant n$, the expected infection time is $$\Es{T_j}=\infty.$$
\end{lemma}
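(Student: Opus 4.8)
The plan is to reduce the statement to the single claim that $\Es{T_{i+1}}=\infty$, since $T_j\ge T_{i+1}$ deterministically for every $j\ge i+1$. I would start by unpacking the hypothesis combinatorially. In the SI model there are only two vertex states, so at time $T_i(\omega_0)$ the vertex set splits into the infected set $W$ (with $|W|=i$, containing $s$) and its complement, and the front $F(T_i(\omega_0))$ is exactly the edge boundary of $W$ in $G$. By assumption this boundary is a single edge $e_0=(u,v)$ with $u\in W$, $v\notin W$. A short argument shows $e_0$ must be a bridge: otherwise $G\setminus e_0$ is connected, yet (as $e_0$ is the only $W$-to-$W^c$ edge) $W$ would be a union of connected components of $G\setminus e_0$, forcing $W=\emptyset$ or $W=V(G)$, both impossible since $s\in W$ and $v\notin W$. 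Hence $G\setminus e_0$ has exactly two components and $W=\CC(s,G\setminus e_0)$; in particular $|\CC(s,G\setminus e_0)|=i$.

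Next I would introduce the \emph{confined} FPP process: the SI spreading on the induced subgraph $G[W]=\CC(s,G\setminus e_0)$ started from $s$, which is connected because deleting $e_0$ (a $W$-to-$W^c$ edge) leaves the edges inside $W$ untouched. Let $\tau$ be the time this confined process infects all of $W$, and $\tau_u\le\tau$ the time it infects $u$; both are measurable with respect to $(X_e)_{e\in E(G[W])}$, hence independent of $X_{e_0}$, and $\tau<\infty$ almost surely since $G[W]$ is finite. The key coupling observation is that in the full process the infected set stays inside $W$ until the first vertex outside $W$ is infected, and the only edge leaving $W$ is $e_0$; so up to that moment the full process restricted to $W$ is autonomous and coincides with the confined process, and the first vertex outside $W$ to be infected is $v$, at time $R:=\tau_u+X_{e_0}$. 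Consequently, on the event $\{X_{e_0}>\tau\}$ we have $R\ge X_{e_0}>\tau$, so the first $i$ vertices infected are exactly $W$ (all by time $\tau$), the $(i{+}1)$-st infected vertex is $v$, and therefore $T_{i+1}=R\ge X_{e_0}$.

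It then remains to show $\Es{X_{e_0}\,\1_{\{X_{e_0}>\tau\}}}=\infty$. Conditioning on $\tau$ and using $X_{e_0}\perp\tau$, this equals $\int_{[0,\infty)}\Es{X_{e_0}\,\1_{\{X_{e_0}>t\}}}\,d\mu_\tau(t)$, where $\mu_\tau$ is the (proper, since $\tau<\infty$ a.s.) law of $\tau$; because $\Es{\xi}=\infty$, the inner expectation $\Es{X_{e_0}\,\1_{\{X_{e_0}>t\}}}=\Es{\xi}-\Es{\xi\,\1_{\{\xi\le t\}}}$ is infinite for every finite $t$, so the integral is infinite. Hence $\Es{T_{i+1}}\ge\Es{X_{e_0}\,\1_{\{X_{e_0}>\tau\}}}=\infty$, and since $T_j\ge T_{i+1}$ for $i+1\le j\le n$, the lemma follows.

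I expect the only genuinely delicate point to be the coupling in the second paragraph: given the somewhat intricate definition of the process via shortest weighted paths and its non-Markovian character, one must argue carefully that the evolution inside $W$ is genuinely autonomous until $v$ is reached and that it then agrees path-by-path with the confined process (e.g.\ by running both processes and comparing infection times step by step up to the first time a vertex outside $W$ appears). Everything else is elementary: finite-graph combinatorics for the bridge, and a one-line consequence of $\Es{\xi}=\infty$ for the final estimate.
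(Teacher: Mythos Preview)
Your proof is correct and takes a different route from the paper's. The paper argues by perturbation: it exhibits a positive-measure event $\mathcal{A}(\omega_0)=\{\omega:|X_e(\omega)-X_e(\omega_0)|<\varepsilon\text{ for all }e\}$ on which the order of edge occupation, and hence the one-edge front at step $i$, is preserved, and then claims $\E[T_{i+1}-T_i\mid\mathcal{A}(\omega_0)]=\E[X_e]=\infty$. You instead extract the combinatorial content directly --- a one-edge front forces $e_0$ to be a bridge with $W=\CC(s,G\setminus e_0)$ of size exactly $i$ --- and use that every $s$--$w$ path with $w\in W$ avoids $e_0$ while every path to $W^c$ must traverse it, yielding the clean pointwise bound $T_{i+1}\ge X_{e_0}\,\1_{\{X_{e_0}>\tau\}}$ with $\tau$ measurable in $(X_e)_{e\in E(G[W])}$, hence independent of $X_{e_0}$, and $\tau<\infty$ a.s. Your argument is more explicit, avoids the perturbation step entirely (and with it the subtlety that conditioning on $\mathcal{A}(\omega_0)$ as written also pins down $X_e$ itself), and effectively contains the proof of Lemma~\ref{lem:comb_kappa} as a by-product. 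The paper's version is shorter to state and makes the role of absolute continuity visible as what gives $\mathcal{A}(\omega_0)$ positive measure; in your argument absolute continuity enters only to ensure $\xi<\infty$ a.s., hence $\tau<\infty$ a.s.
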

\begin{proof}
The sample sequence $T(\omega_0)$ defines the order of occupation of the edge set $\epsilon_{T(\omega)}=(e^{\epsilon}_{1},e^{\epsilon}_{2},\dots,\\ e^{\epsilon}_{m})$. Since all edge weights have absolutely continuous distribution and the number of edges is finite, there exists a subset $\mathcal{A}(\omega_0)$ of sample sequences of positive measure with the same order of occupation of edges as in $\epsilon_{T(\omega)}$. More precisely, there exists a small $\varepsilon>0$ such that the set:
$$\mathcal{A}(\omega_0)=\{\omega: |X_{e^{\epsilon}_{j}}(\omega)-X_{e^{\epsilon}_{j}}(\omega_0)|<\varepsilon\},$$
which has positive measure for any $\varepsilon>0$. For this, one can take $\varepsilon$ to be smaller than half of the minimum of all the absolute differences between the edge weights of different edges (which is almost surely positive).

Then, since the front $F(T_i(\omega_0))$ has one active edge, then for each $\omega\in\mathcal{A}(\omega_0)$, the front $F(T_i(\omega))$ also has one active edge, say $e$. Therefore, we have
$$\Es{T_{i+1}-T_{i}\md \mathcal{A}(\omega_0)}=\Es{X_e}=\infty,$$
and by the law of total expectation
$$\Es{T_{i+1}}=\Es{T_{i+1}-T_{i}} + \Es{T_i} > \Es{T_{i+1}-T_{i}\md \mathcal{A}(\omega_0)}\Ps{\mathcal{A}(\omega_0)}=\infty.$$
Since $T_{i}\preceq T_{i+1}$ for all $i$, then we have $\Es{T_{j}}=\infty$, for all $j>i+1$, which finishes the proof of the lemma.
\end{proof} 
By Lemma~\ref{lem:SI_inf_expect}, $\kappa(G,s)$ captures the exact number of nodes that are infected before the possible emergence of the first plateau. It has a simple combinatorial description, as we stated in Lemma~\ref{lem:comb_kappa}.

\begin{proof}[Proof of Lemma~\ref{lem:comb_kappa}]
Suppose that there exists $\omega\in\Omega$ and the number $k$, where $0<k<n$, such that $F(T_{k}(\omega))$ has one active edge $e\in E$. Then at time $T_{k}(\omega)$ we can divide vertices of $G$ into two classes: infected (in state I) and susceptible (in state S). In the induced subgraph on infected vertices all edges are occupied, and in the subgraph on susceptible vertices all edges are unoccupied, and since the only edge is active, then it is the only one edge between these two subgraphs. Hence, the active edge $e$ is a cut edge, and the size of the infected subgraph equals to $k_{inf}=|\CC(s,G\backslash e)|$. Since $\kappa(G,s)$ is defined as the minimum over $\omega\in\Omega$, hence can be a bottleneck edge in the process, thus
$$\kappa(G,s)=\min_{e\in E(G)}|\CC(s,G\backslash e)|.$$
This finishes the proof of the Lemma.
\end{proof} 

\noindent\textbf{Delayed process $\T$.}  Fix an arbitrary order on the edge set $\epsilon=(e^{\epsilon}_{1},e^{\epsilon}_{2},\dots,e^{\epsilon}_{m})$. Define the process $\T = \{\T_k:k\in\{1,\dots,n\}\}$, coupled with the original process $T$ as follows. Start with the infected root $s$. Then choose the two active edges incident to $s$ with smallest indices in the order $\epsilon$ and spread the infection through them. At the time when one of these edges becomes occupied, choose the next active edge from $E$ with the smallest index in $\epsilon$ and repeat the procedure. If the two active edges share one susceptible vertex, then, when one edge gets occupied, choose two new active edges, given by the smallest indices in $\epsilon$. The process runs until there are no more new active edges to take, and the remaining times are assumed to be infinite. 

Obviously, for each $k<\kappa(G,s)$ and each $\omega\in\Omega$ the front $F(\T_k(\omega))$ has two active edges, since the delayed process can be turned into a particular realization of an original process with order of edge occupation $\epsilon$.

The process $\T$ stochastically dominates the process $T$, which is proved in the following lemma.
\begin{lemma}\label{lem:stochdom}
Let $G=(V,E,s)$ be a finite connected rooted graph and let $T$ be the SI spreading process on $G$ with absolutely continuous passage times $\xi$. Then the delayed process $\T$ stochastically dominates the process $T$.
\end{lemma}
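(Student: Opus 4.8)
The plan is to prove the stronger, pointwise statement and then invoke the coupling criterion for stochastic domination recorded in the Notation subsection. Run both processes on the same probability space $\Omega$, with the same family of edge weights $(X_e)_{e\in E}$; I claim that then $T_k(\omega)\leqslant\T_k(\omega)$ for every $k\in\{1,\dots,n\}$ and almost every $\omega$, which is what is needed. Since $G$ is connected, whenever a susceptible vertex remains there is at least one active edge, so the delayed process infects every vertex, and in finite time since each $X_e<\infty$ a.s.; hence we may list the vertices as $v_1=s,v_2,\dots,v_n$ in the order in which $\T$ infects them (ties broken by the fixed order $\overline{\epsilon}$), with infection times $0=\T_1\leqslant\T_2\leqslant\dots\leqslant\T_n$. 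Each completion event of $\T$ infects exactly one previously susceptible vertex --- here the rule for two active edges sharing a susceptible endpoint is used --- so for $k\geqslant 2$ there is a well-defined edge $e_k\in E(G)$ whose completion infected $v_k$, together with its endpoint, say $v_{j(k)}$, that was already infected at that moment.

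The core of the argument is the inequality
$$\T_k \;\geqslant\; \T_{j(k)} + X_{e_k}\qquad(k\geqslant 2),$$
which in particular forces $\T_{j(k)}<\T_k$, hence $j(k)<k$. To see it: while $v_k$ is still susceptible, the edge $e_k$ is active if and only if its other endpoint $v_{j(k)}$ is infected, so $e_k$ cannot be \emph{selected} by the delayed process before time $\T_{j(k)}$; once selected it runs, uninterrupted, for exactly $X_{e_k}$ units of time before completing at time $\T_k$. Using this, I would prove by induction on $k$ that $|P(s,v_k)|\leqslant\T_k$, where $|P(s,\cdot)|$ denotes weighted graph distance in $(G,X)$, i.e.\ the infection time of the vertex in the original process $T$. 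The case $k=1$ reads $|P(s,s)|=0$. For the step, since $e_k=(v_{j(k)},v_k)\in E(G)$,
$$|P(s,v_k)| \;\leqslant\; |P(s,v_{j(k)})| + X_{e_k} \;\leqslant\; \T_{j(k)} + X_{e_k} \;\leqslant\; \T_k,$$
the middle inequality being the induction hypothesis, applicable because $j(k)<k$.

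Finally I would pass from this vertex-wise bound to a bound on $T_k$. In the original process vertex $v$ is infected precisely at time $|P(s,v)|$, and the minimum in the defining formula $T_k=\min_{H_k}\max_{t\in V(H_k)}|P(s,t)|$ is attained by taking $H_k$ to be the weighted ball around $s$ of the appropriate radius, which is connected and contains $s$; hence $T_k$ equals the $k$-th smallest value among $\{|P(s,v)|:v\in V(G)\}$. Since $\T_1\leqslant\dots\leqslant\T_k$, the $k$ vertices $v_1,\dots,v_k$ all satisfy $|P(s,v_i)|\leqslant\T_i\leqslant\T_k$, so at least $k$ vertices lie within weighted distance $\T_k$ of $s$, forcing $T_k\leqslant\T_k$. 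As this holds a.e., $\Ps{T_k\geqslant x}\leqslant\Ps{\T_k\geqslant x}$ for all $x$, i.e.\ $T\preceq\T$.

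I expect the main obstacle to be not any single estimate but the careful handling of the somewhat informally specified delayed process: checking that it terminates with all of $G$ infected, that every completion infects exactly one new vertex so that the maps $k\mapsto e_k$ and $k\mapsto j(k)$ (and the resulting trail tree) are well defined, and --- the genuinely load-bearing point --- that an edge cannot be selected before its already-infected endpoint has been infected, which is what makes the key inequality $\T_k\geqslant\T_{j(k)}+X_{e_k}$ hold. The identification of $T_k$ with the $k$-th order statistic of the weighted distances from $s$ is routine but worth writing out, since it is what turns the per-vertex bound into the claim about $T_k$.
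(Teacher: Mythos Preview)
Your proof is correct and follows the same route as the paper's: both establish the pointwise inequality $T_k(\omega)\leqslant\T_k(\omega)$ on the natural coupling, the paper by noting that the delayed infection trail tree $G(\T_k(\omega))$ is a member of $\GC_k$ and invoking the variational formula for $T_k$, you by making the bound $\max_{t\in V(G(\T_k))}|P(s,t)|\leqslant\T_k$ explicit via the induction $|P(s,v_k)|\leqslant\T_{j(k)}+X_{e_k}\leqslant\T_k$. One minor correction: by the paper's convention the delayed process halts with $\T_k=\infty$ once there are no longer two active edges to select, so your claim that it always infects every vertex is not literally true---but this is harmless, since the pointwise inequality is trivial whenever $\T_k=\infty$.
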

\begin{proof}
Consider an $\omega\in\Omega$. Then for any $k$, where $1\leqslant k\leqslant n$, the sample sequence of the delayed process $\T_k(\omega)$ induces the infection trail tree $G(\T_k(\omega))$ and we have 
$$\T_k(\omega) = \max_{t\in V(G(\T_k(\omega)))} |P(s,t)|.$$
On the other hand, the original process is given by the minimum over all possible subtrees on $k$ vertices:
$$T_k(\omega) = \min_{H_k\in \GC_k}\max_{t\in V(H_k)} |P(s,t)|.$$
Therefore, we have $T_k(\omega)\leqslant\T_k(\omega)$ for all $k$, and, hence, $T\preceq\T$.
\end{proof} 

Despite the fact that the delayed process runs slower than the original, the next we define the process $Q$, which is even slower, but is necessary to achieve our final statement.

\medskip
\noindent\textbf{Process $Q$.} Define the $Q=\{Q_k:k\in\{1,\dots,n\}\}$ to be the stochastic process, in which at each time $Q_k$ there are two active edges with weights $X$ and $Y$ in the front: one of them is always old, with the age of the process, and an other is new. In symbols, let 
\begin{equation}\label{eq.procQ_k}
\begin{aligned}
Q_1 &= 0,\\
Q_2 &= \min\{X,Y\},\\
Q_{k+1} & = Q_k + \min\{X,Y-Q_k|Q_k,Y>Q_k\},
\end{aligned}
\end{equation}
where the unconditional $X,Y$ are i.i.d. edge weights. The process $Q$ qualitatively constitutes the particular scenario the infection can spread on $\Z$, having an ever old edge $Y$ and spreading only along new edges $X$ in one direction. The following lemma provides a bound on the expected time to infect $k$ vertices in the process $Q$.

\begin{lemma}\label{thm.expBound1-old&1-new}
Consider the process $Q_k$ defined above in \eqref{eq.procQ_k} with $X,Y\sim\pow(\aa)$. Then, for $\aa\in(1/2,1)$ and for each $k$, where $k\geqslant 1$, we have 
$$\Es{Q_k}\leqslant dk^{1/\aa}.$$
where $d>0$ is a constant that depends on $\aa$.
\end{lemma}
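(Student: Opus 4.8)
The plan is to control $b_k:=\E(Q_k)$ through a recursive inequality of exactly the type handled by Lemma~\ref{lem.boundB_k}. First I would record the one-step estimate. By the definition~\eqref{eq.procQ_k}, conditionally on $Q_k=t$ the increment $Q_{k+1}-Q_k$ has the law of $\min\{X,Y-t\}$ conditioned on $\{Y>t\}$, with $X,Y\sim\pow(\aa)$ i.i.d.\ and independent of the past; hence $\E(Q_{k+1}-Q_k\mid Q_k)=g(Q_k)$ where $g(t):=\E(\min\{X,Y-t\}\mid Y>t)$ is a deterministic function. For $t>1$, Lemma~\ref{lem.expectxy-t} gives $g(t)\leqslant C\,t^{1-\aa}$ for a constant $C=C(\aa)$. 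For $0\leqslant t\leqslant 1$, since $Y-t\leqslant Y$ and $\Pr(Y>t)$ is bounded below by a positive constant on $[0,1]$, we get $g(t)\leqslant c_0$ for an absolute constant $c_0=c_0(\aa)$ (here $c_0$ is just a multiple of $\E(\min\{X,Y\})$, which is finite because $2\aa>1$). Combining the two ranges, $g(t)\leqslant c_0+C\,t^{1-\aa}$ for all $t\geqslant 0$.

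Next I would pass from the conditional bound to a recursion for $b_k$. Taking expectations and using that $x\mapsto x^{1-\aa}$ is concave on $[0,\infty)$ (valid since $0<\aa<1$), Jensen's inequality yields $\E(Q_k^{1-\aa})\leqslant b_k^{1-\aa}$, so
$$
b_{k+1}\leqslant b_k + c_0 + C\,b_k^{1-\aa}.
$$
A short induction, with base cases $b_1=0$ and $b_2=\E(\min\{X,Y\})\in(0,\infty)$, shows that every $b_k$ is finite, which is what legitimises these manipulations.

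It remains to absorb the additive constant $c_0$ so that Lemma~\ref{lem.boundB_k} applies. Since $Q_k$ is nondecreasing, $b_k\geqslant b_2>0$ for all $k\geqslant2$, whence $c_0\leqslant c_0\,b_2^{-(1-\aa)}\,b_k^{1-\aa}$ and therefore
$$
b_{k+1}\leqslant b_k + C'\,b_k^{1-\aa},\qquad C':=C+c_0\,b_2^{-(1-\aa)},
$$
for all $k\geqslant 2$. Applying Lemma~\ref{lem.boundB_k} to the shifted sequence $(b_{k+1})_{k\geqslant1}$ (initial value $b_2$, constant $C'$) gives $b_k\leqslant d\,(k-1)^{1/\aa}\leqslant d\,k^{1/\aa}$ for $k\geqslant2$ with $d=\max\{b_2,(\aa C')^{1/\aa}\}$, and $b_1=0\leqslant d$, which is the asserted bound $\E(Q_k)\leqslant d\,k^{1/\aa}$.

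The proof has essentially no deep obstacle: the content lies entirely in the two preliminary lemmas, and the only mildly delicate points are bookkeeping. Namely, Lemma~\ref{lem.expectxy-t} is stated only for $t>1$, so the range $t\leqslant1$ (i.e.\ the early steps, before $Q_k$ exceeds the scale of the weights) must be handled separately; and the resulting recursion carries an extra additive constant that has to be absorbed into the $b_k^{1-\aa}$ term using the trivial lower bound $b_k\geqslant b_2>0$. Once these are settled, the estimate is immediate from Lemma~\ref{lem.expectxy-t}, Jensen's inequality, and Lemma~\ref{lem.boundB_k}.
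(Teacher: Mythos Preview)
Your proof is correct and follows essentially the same route as the paper: condition on $Q_k$, apply Lemma~\ref{lem.expectxy-t} to bound the increment by $C\,Q_k^{1-\aa}$, use Jensen for the concave map $x\mapsto x^{1-\aa}$, and feed the resulting recursion into Lemma~\ref{lem.boundB_k}. The only difference is bookkeeping: you treat the range $t\leqslant 1$ separately and absorb the resulting additive constant $c_0$ into the $b_k^{1-\aa}$ term via $b_k\geqslant b_2>0$, whereas the paper sidesteps this by implicitly using that the $\pow(\aa)$ law is supported on $[1,\infty)$, so $Q_k\geqslant 1$ a.s.\ for $k\geqslant 2$ and Lemma~\ref{lem.expectxy-t} applies directly. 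Your version is slightly more explicit and would survive a change in the lower cutoff of the weight distribution.
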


\begin{proof}
Using the law of total expectation, Lemma~\ref{lem.expectxy-t} and Jensen's inequality we have that
\begin{align*}
\Es{Q_{k+1}}&=\Es{Q_k+\min\{X,Y-Q_k\}} = \Es{Q_k} + \Eb{\Es{\min\{X,Y-Q_k\}|Q_k}} \\
&\leqslant \Es{Q_k} + C \Es{Q_k^{1-\aa}} \\
&\leqslant\Es{Q_k} + C \Es{Q_k}^{1-\aa},
\end{align*}
where $C>0$ is a large enough constant. Then immediately we have $\Es{Q_k}\leqslant b_k$, where $b_k$ is defined with a recursion
$$b_{k+1}=b_k+C b_k^{1-\aa},$$
$$b_1=\Es{Q_2}= \frac{2\aa}{2\aa-1}\leqslant (\aa C)^{1/\aa}:=d.$$
By Lemma~\ref{lem.boundB_k}, this sequence is bounded and we have for any $k\geqslant 1$:
$$\Es{Q_k}\leqslant d k^{1/\aa}.$$
This finishes the proof of the lemma.
\end{proof} 

Define the random variable $X_s$ with the following probability measure:
$$\Ps{X_s>t}:= \Ps{X-s>t|X>s},$$ 
where $s>0$. We call the random variable $X$ to have \textit{shifted power law} distribution $\shiftpow(\aa)$ with $\aa>0$, if $\Ps{X>t}=(t+1)^{-\aa}$, for all $t\geqslant 0$. Notionally, it represents a power law random variable with a shift of a size of the minimum cutoff. Note that for any $s>0$, if the random variable $X\sim \shiftpow(\aa)$ with $\aa>0$, then $X_s\stackrel{d}{=} (s+1)X$ and therefore, for any $s_1<s_2$:
\begin{equation}\label{eq:old_dom_new}
X_{s_1}\preceq X_{s_2}.
\end{equation}
In other words, if we consider the SI spreading with shifted power law passage times, then the older edges dominate the newer ones. We can now prove the main theorem of this section: 

\begin{proof}[Proof of Theorem~\ref{thm:main_smoothing}]
Let a random variable $X\sim\pow(\aa)$, then $(X-1)\sim \shiftpow(\aa)$. Define $T^{(X-1)}$ to be the SI spreading process  coupled to $T$, with shifted power law passage times with the same parameter $\aa$ as in $T$. The shift in times is deterministic, hence by time $T_k$, the process with shifted times is faster than the original process by the cumulative shift of not more than $k-1$:
$$T_k-T_k^{(X-1)}\leqslant k-1,$$
and equality holds only if the infection trail tree is a path of length $k-1$. The cumulative shift depends on the shape of the infection trail tree given by the process at time $T_k$ and can therefore be non-deterministic.

Since for $X,Y\sim \shiftpow(\aa)$, then by Lemma~\ref{lem:stochdom} the delayed process $\T^{(X-1)}$ with the same weights dominates the process $T^{(X-1)}$, and we have for any $k\leqslant\kappa(G,s)$:
$$T_k\preceq \T_k^{(X-1)}+(k-1).$$
Now consider the process $Q$ with shifted power law weights denoted as $Q^{(X-1)}$. Since in this case the old edges dominate newer ones, the1 process $Q^{(X-1)}$ dominates the process $\T^{(X-1)}$, hence we have
\begin{equation}\label{eq:shift_dom}
T_k\preceq Q_k^{(X-1)} +(k-1).
\end{equation}
Since the shift is negative, then we have $Q^{(X-1)}\preceq Q$. Hence, we have for each $k$, where $1\leqslant k\leqslant \kappa(G,s)$,
$$T_k\preceq Q_k+(k-1).$$
By Lemma \ref{thm.expBound1-old&1-new} we have
$$\Es{T_k}\leqslant d k^{1/\aa}+(k-1)\leqslant (d+1)k^{1/\aa},$$
where $d>0$ depends on $\aa$, which finishes the proof of the theorem.
\end{proof}
From the statement of Lemma~\ref{lem:comb_kappa} we have the following simple corollary.
\begin{corollary}
Let $G$ be a finite $2$-edge-connected rooted graph with root $s$ and let $T$ be the $SI$ spreading process on $G$ with power law weights, where $1/2<\aa<1$. Then for each $k\leqslant n$, 
$$\Es{T_k}\leqslant C k^{1/\aa}.$$
\end{corollary}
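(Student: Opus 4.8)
The plan is to obtain this statement with essentially no new work, as a direct consequence of the combinatorial description of $\kappa(G,s)$ in Lemma~\ref{lem:comb_kappa} combined with Theorem~\ref{thm:main_smoothing}. The only thing to do is to compute $\kappa(G,s)$ for a $2$-edge-connected graph and feed it into the theorem.

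First I would recall that a finite connected graph is $2$-edge-connected if and only if it contains no bridge, that is, no edge $e$ for which $G\backslash e$ is disconnected. Hence for every $e\in E(G)$ the graph $G\backslash e$ is still connected, and since $s\in V(G)$ this gives $\CC(s,G\backslash e)=V(G)$, so that $|\CC(s,G\backslash e)|=n$ for every edge $e$. Next I would invoke Lemma~\ref{lem:comb_kappa}: its hypotheses ask for passage times with absolutely continuous distribution and infinite mean, and the power law distribution $\pow(\alpha)$ with $\alpha\in(1/2,1)$ satisfies both (it is absolutely continuous, and $\Es{\xi}=\infty$ since $\alpha<1$). The lemma then yields
$$\kappa(G,s)=\min_{e\in E(G)}|\CC(s,G\backslash e)|=n.$$

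Finally, Theorem~\ref{thm:main_smoothing} states that $\Es{T_k}\leqslant C k^{1/\alpha}$ for every $1\leqslant k\leqslant\kappa(G,s)$; since here $\kappa(G,s)=n$, this bound holds for all $k\leqslant n$, which is exactly the claim. There is no substantive obstacle: the only points needing a moment's care are checking that the power law weights fit the framework of Lemma~\ref{lem:comb_kappa}, and the trivial remark that $\kappa(G,s)\leqslant n$ always (there are only $n$ vertices), so the identity above really forces $\kappa(G,s)=n$ rather than something larger. In particular, this confirms the informal statement from the introduction that $2$-edge-connected graphs — cycles being the simplest example — exhibit no temporal bottleneck for the SI process with infinite-mean weights.
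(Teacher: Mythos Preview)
Your proposal is correct and matches the paper's approach exactly: the paper presents this as an immediate consequence of Lemma~\ref{lem:comb_kappa} (a $2$-edge-connected graph has no bridges, so $\kappa(G,s)=n$) fed into Theorem~\ref{thm:main_smoothing}. There is nothing to add.
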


Thus, we established the existence of specific number of nodes $\kappa(G,s)$ for each finite connected rooted graph $G$, up to which a spreading curve of the SI process with power law passage times with $\aa\in (1/2,1)$ has no large plateaux. In case of trees, there is an analytic expression for $\kappa$. Consider a rooted tree $\TT$ with root $s$ on $n$ vertices. Let the degree of the root be equal to $d_s$. Denote subtrees hanging from the root as $\TT_1,\dots, \TT_{d_s}$ (see Figure~\ref{fig:cycle_trees_illustr}, a)). Then, by Lemma~\ref{lem:comb_kappa}, 
\begin{equation}\label{eq:kappa_tree}
\kappa(\TT,s)=|\TT|-\max\{|\TT_i|:1\leqslant i\leqslant d_s\}.
\end{equation}

\begin{figure}[h]
\centering
\begin{minipage}[b]{0.49\textwidth}
\centering
\includegraphics[width=0.68\linewidth]{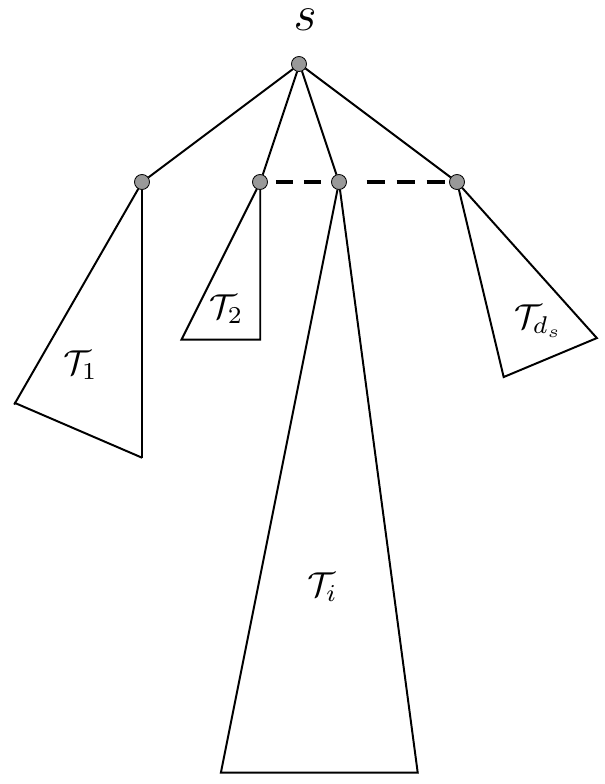}
\caption*{(a)}
\end{minipage}
\begin{minipage}[b]{0.49\textwidth}
\centering
\includegraphics[width=0.95\linewidth]{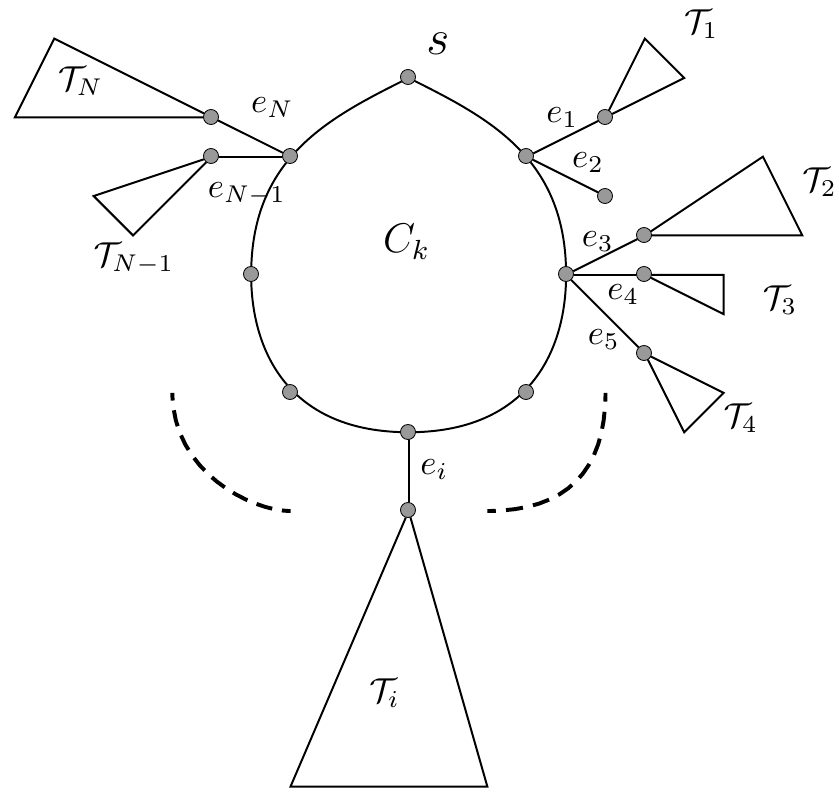}
\caption*{(b)}
\end{minipage}
\caption{Schematic structure: a) the tree $\TT$ with the root $s$ having degree $d_s$; b) the graph $\TT_{+e}$ of a tree $\TT$ with an extra edge $e$: a cycle $C_k$ with hanging trees $\TT_1,\TT_2,\dots,\TT_N$. The sizes of triangles represent the relative size of hanging trees.}
\label{fig:cycle_trees_illustr}
\end{figure}

Now add an extra edge $e$ between $s$ and a randomly chosen vertex in the tree. Then, we obtain a graph, denoted as $\TT_{+e}$, which consists of a cycle $C_k$ of length $k$ with $N$ rooted trees $\TT_1,\TT_2,\dots,\TT_N$, where $N\geqslant d_s$ a.s., attached to it by edges $e_1,e_2,\dots, e_N$ (see Figure~\ref{fig:cycle_trees_illustr}, b)). Then, by Lemma~\ref{lem:comb_kappa}, we have
\begin{equation}\label{eq:kappa_tree_edge}
\kappa(\TT_{+e},s)=|\TT_{+e}|-\max\{|\TT_i|:1\leqslant i\leqslant N\}.
\end{equation}
In the following sections we consider models of random trees, and we use the above formulas to compare how the addition of an extra edge changes $\kappa$.

\section{Critical Galton-Watson trees}\label{s.GW}

\subsection{Preliminaries}\label{ss.GWprelim}

A Galton-Watson (GW) tree $\TT$ is defined as a geneological tree of a \textit{Galton-Watson process} $\langle Z_n : n\geqslant 0\rangle$ of evolution of a particle system and we assume that $Z_0=1$ (see e.g. \cite{Janson2012} for the introduction). The process is characterized by the offspring distribution $\xi$, which is assumed in context of the paper to be non-degenerate, integer and have finite variance $\Var(\xi)<\infty$. The process (and the tree) is called \textit{critical}, if $\Es{\xi}=1$.

The GW trees can be viewed as rooted labeled trees. The root of the tree $\TT$ corresponds to the particle in the generation $Z_0$, and it is denoted by $<0>$. A generic particle in the generation $Z_{k}$ is indexed as $<0,l_1,\dots,l_k>$, where $l_r\geqslant 1$, $1\leqslant r\leqslant k$. The particles $<0,l_1,\dots,l_{k-1},j>$, where $j=1,2,\dots$, denote the children of the particle $<0,l_1,\dots,l_{k-1}>$ in generation $k-1$. Of course, not for all $j$ does $<0,l_1,\dots,l_{k-1},j>$ correspond to an actual vertex of $\TT$. Let $N(0,l_1,\dots,l_{k-1})$ be the number of children of $<0,l_1,\dots,l_{k-1}>$ in the process. Then $<0,l_1,\dots,l_{k-1},j>$ is a vertex of $\TT$ for $1\leqslant j\leqslant N(0,l_1,\dots,l_{k-1})$. 

Denote the set of all GW trees as $\langle GW \rangle$ and a randomly chosen GW tree as $\TT$. The size of $\TT$ is defined as the number of vertices it contains and is denoted as $|\TT|$. It is well known that a CGW tree is almost surely finite (e.g. Theorem 3.1, p. 84, \cite{RemcoVDH}) and the following theorem provides a bound on the size of a CGW tree \cite{Kolchin1986}.

\begin{lemma}\label{thm:size_trees}
Let $\TT$ be a CGW tree with integer offspring distribution $\xi$, such that $\Var(\xi):=\sigma^2<\infty$. Then for $n\to\infty$,
$$\Ps{|\TT|=n}=\frac{1}{\sqrt{2\pi}\sigma}n^{-3/2}(1+o(1)).$$
\end{lemma}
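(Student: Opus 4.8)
The plan is to reduce the statement to a local central limit estimate via the classical correspondence between the total progeny of a Galton--Watson tree and a hitting time of a left-continuous random walk. Let $\xi_1,\xi_2,\dots$ be i.i.d.\ copies of the offspring distribution $\xi$, put $S_0=0$ and $S_j=\sum_{i=1}^{j}(\xi_i-1)$. Exploring $\TT$ one vertex at a time in, say, breadth-first order, the number of vertices that have been discovered but not yet explored after $j$ steps equals $1+S_j$, so $|\TT|$ is exactly the first index $n\geqslant 1$ at which this queue size hits $0$, i.e.\ the first hitting time of $-1$ by the walk $(S_j)$. Since each increment $\xi_i-1$ is at least $-1$, the walk is skip-free to the left, and the cycle lemma (Dvoretzky--Motzkin/Kemperman, see e.g.\ \cite{Janson2012}) gives Dwass' identity
$$\Ps{|\TT|=n}=\frac{1}{n}\,\Ps{S_n=-1},\qquad n\geqslant 1.$$

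Next I would apply the local central limit theorem to $(S_n)$. Because $\Es{\xi}=1$, the increments $\xi_i-1$ have mean $0$ and variance $\Var(\xi)=\sigma^2\in(0,\infty)$; assuming, as is implicit here and harmless for our applications, that $\gcd\{k:\Ps{\xi=k}>0\}=1$ so that the walk $(S_n)$ has span $1$, the local CLT (see, e.g., \cite{Durrett2010}) yields
$$\sup_{x\in\Z}\left|\sqrt{n}\,\Ps{S_n=x}-\frac{1}{\sqrt{2\pi}\,\sigma}\,e^{-x^2/(2n\sigma^2)}\right|\to 0.$$
Taking $x=-1$ and using $e^{-1/(2n\sigma^2)}\to 1$ gives $\Ps{S_n=-1}=\dfrac{1}{\sqrt{2\pi n}\,\sigma}\,(1+o(1))$, and substituting this into Dwass' identity,
$$\Ps{|\TT|=n}=\frac{1}{n}\cdot\frac{1}{\sqrt{2\pi n}\,\sigma}\,(1+o(1))=\frac{1}{\sqrt{2\pi}\,\sigma}\,n^{-3/2}(1+o(1)),$$
as claimed.

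As for difficulty: there is no real obstacle, since this is a classical fact (it is essentially the local asymptotics of Kolchin \cite{Kolchin1986}), and the argument is just an assembly of standard ingredients. The only two points needing care are (i) invoking Dwass' identity with the correct $1/n$ normalization, which relies on the left-continuity (skip-free-to-the-left property) of the exploration walk; and (ii) the arithmetic of the offspring law: if $d:=\gcd\{k:\Ps{\xi=k}>0\}>1$, then $\Ps{S_n=-1}=0$ unless $n$ lies in the appropriate arithmetic progression, along which the local CLT produces an extra factor $d$, so one must either assume $d=1$ (as the statement implicitly does) or restate the asymptotics along that progression.
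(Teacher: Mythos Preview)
Your argument is correct and is the standard derivation: Dwass' identity reduces the total progeny law to a random-walk hitting probability, and the local CLT gives the $n^{-1/2}$ asymptotic for $\Ps{S_n=-1}$. The paper itself does not prove this lemma at all; it simply quotes it as a known result from \cite{Kolchin1986}, so there is no ``paper's proof'' to compare against. Your remark about the aperiodicity caveat (span $d>1$) is also well taken and is the only subtlety worth flagging.
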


The \textit{height} $H(\TT)$ of a GW tree $\TT$ is the length of a longest path from the root or the maximum $N$, such that $Z_N>0$. The following limit theorem about the height of the tree $\TT$ holds \cite{Kesten1966}.

\begin{lemma}\label{thm:kolmogorov}
Let $\TT$ be a CGW tree with offspring distribution $\xi$, such that $\Var(\xi):=\sigma^2<\infty$. Then we have,
$$\lim_{N\to\infty}N\Ps{H(\TT)>N}=\lim_{N\to\infty}N\Ps{Z_N>0}=\frac{2}{\sigma^2}.$$
\end{lemma}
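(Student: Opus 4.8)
The plan is to split the statement into its two halves: the elementary set identity relating the tree height to survival of the branching process, and then the Kolmogorov asymptotics $N\,\Ps{Z_N>0}\to 2/\sigma^2$. The identity is immediate from the definition of $H(\TT)$ as $\max\{N:Z_N>0\}$ (with $Z_0=1$): the event $\{H(\TT)>N\}$ is exactly $\{Z_{N+1}>0\}$, because once $Z_M=0$ all later generations vanish; and since $N\,\Ps{Z_{N+1}>0}\sim(N+1)\,\Ps{Z_{N+1}>0}$, this has the same limit as $N\,\Ps{Z_N>0}$ provided the latter converges. So it suffices to prove $\lim_N N\,\Ps{Z_N>0}=2/\sigma^2$.

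For this I would work with probability generating functions. Let $f(s)=\Es{s^\xi}=\sum_{k\geqslant 0}p_k s^k$, let $f_N$ be its $N$-fold iterate, so $\Es{s^{Z_N}}=f_N(s)$ and $\Ps{Z_N=0}=f_N(0)$, and set $u_N:=\Ps{Z_N>0}=1-f_N(0)$, which satisfies $u_{N+1}=1-f(1-u_N)$. Since $\xi$ is non-degenerate and critical, $p_0>0$, extinction is almost sure, and $u_N\downarrow 0$. Criticality and $\Var(\xi)=\sigma^2<\infty$ give $f(1)=1$, $f'(1)=\Es{\xi}=1$, and $f''(1^-)=\Es{\xi(\xi-1)}=\sigma^2$ (the last limit existing since $\sum_k k(k-1)p_k<\infty$), so Taylor's theorem yields $f(1-x)=1-x+\tfrac{\sigma^2}{2}x^2+o(x^2)$ as $x\to 0^+$; substituting $x=u_N$ turns the recursion into
$$u_{N+1}=u_N-\tfrac{\sigma^2}{2}u_N^2+o(u_N^2).$$

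The conclusion is the standard inversion trick. Because $u_{N+1}=u_N(1+o(1))$,
$$\frac{1}{u_{N+1}}-\frac{1}{u_N}=\frac{u_N-u_{N+1}}{u_N u_{N+1}}=\frac{\tfrac{\sigma^2}{2}u_N^2+o(u_N^2)}{u_N u_{N+1}}\xrightarrow[N\to\infty]{}\frac{\sigma^2}{2},$$
whence by Stolz--Ces\`{a}ro $\frac{1}{N u_N}\to\frac{\sigma^2}{2}$, i.e. $N\,\Ps{Z_N>0}\to 2/\sigma^2$; together with the first paragraph this gives the lemma. I do not expect a genuine obstacle here — this is the classical Kolmogorov estimate for critical branching processes — and the only step needing a little care is legitimizing the $o(u_N^2)$ remainder: writing it in Lagrange form $\tfrac12 f''(\theta_x)x^2$ with $\theta_x\in(1-x,1)$, this reduces to left-continuity of $f''$ at $1$, which follows from $\Var(\xi)<\infty$ by dominated convergence applied to $f''(s)=\sum_{k\geqslant 2}k(k-1)p_k s^{k-2}$. (One could of course also just invoke the cited classical estimate.)
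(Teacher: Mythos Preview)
Your proof is correct; it is the standard generating-function derivation of Kolmogorov's estimate. Note, however, that the paper does not actually prove this lemma: it is stated as a classical result with a citation to Kesten--Ney--Spitzer, so there is no paper proof to compare against. Your argument is exactly the textbook one (the recursion $u_{N+1}=1-f(1-u_N)$, the second-order expansion at $1$ using $f''(1^-)=\sigma^2$, and Stolz--Ces\`aro on the increments of $1/u_N$), and your remark on left-continuity of $f''$ at $1$ via dominated (or monotone) convergence is the right way to justify the $o(u_N^2)$ remainder.
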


The following theorem provides an upper bound on the probability of having a tree of height at least $N$ conditioned on the exact size of this tree \cite{Janson2011}, \cite{Kolchin1986}.
\begin{lemma}\label{thm:width}
Let $\TT$ be a CGW tree with offspring distribution $\xi$, such that $\Var(\xi):=\sigma^2<\infty$. Then there exist positive constants $C$ and $c$, such that
$$\Ps{H(\TT)\geqslant x\big| |\TT|=n}\leqslant Ce^{-cx^2/n}.$$
\end{lemma}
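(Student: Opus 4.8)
This is a classical estimate on conditioned Galton--Watson trees (Kolchin \cite{Kolchin1986}, Janson \cite{Janson2011}); here is the route I would take, using only Lemmas~\ref{thm:size_trees} and~\ref{thm:kolmogorov} as black boxes.

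\textbf{Reduction to a coefficient bound.} Let $\phi(s)=\E[s^{\xi}]$ be the offspring generating function, so that $B(z):=\sum_{n\ge1}\Pr(|\TT|=n)\,z^{n}$ solves $B(z)=z\,\phi(B(z))$, with $B(1)=1$ and, by criticality ($\phi'(1)=1$, $\phi''(1)=\sigma^{2}$), the square-root singularity $1-B(z)=\sqrt{2(1-z)/\sigma^{2}}\,(1+o(1))$ as $z\uparrow1$. For $h\ge0$ put $B_{h}(z)=\E\!\big[z^{|\TT|}\,\mathbf 1\{H(\TT)\le h\}\big]$; then $B_{0}(z)=\phi(0)z$, $B_{h+1}(z)=z\,\phi(B_{h}(z))$, each $B_{h}$ is a polynomial, and $[z^{n}]B_{h}=\Pr(|\TT|=n,\,H(\TT)\le h)$. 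Hence, with $D_{h}:=B-B_{h}$ (which has non-negative coefficients, since $\mathbf 1\{H\le h\}\le1$),
\[
\Pr\!\big(|\TT|=n,\ H(\TT)\ge h+1\big)=[z^{n}]D_{h}(z).
\]
Since $[z^{n}]B=\Pr(|\TT|=n)\sim\tfrac{1}{\sqrt{2\pi}\,\sigma}\,n^{-3/2}$ by Lemma~\ref{thm:size_trees}, it suffices to show $[z^{n}]D_{h}\le C\,n^{-3/2}\,e^{-c(h+1)^{2}/n}$.

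\textbf{A priori bounds for $D_{h}$ on $[0,1)$.} From $D_{h+1}=z\big(\phi(B)-\phi(B_{h})\big)$ and convexity of $\phi$ we get $D_{h+1}(z)\le z\,\phi'(B(z))\,D_{h}(z)=:g(z)D_{h}(z)$ for $z\in[0,1]$. Here $g(z)<1$ on $[0,1)$ (as $\phi'$ is strictly increasing and $B(z)<1$ there), and the square-root expansion of $B$ gives $g(z)=1-\sqrt{2\sigma^{2}(1-z)}\,(1+o(1))$ near $1$; combined with a uniform gap on each $[0,1-\varepsilon_{0}]$ this yields $g(z)\le1-c_{1}\sqrt{1-z}$ on $[0,1)$, hence $D_{h}(z)\le e^{-c_{1}h\sqrt{1-z}}$ (using $D_{0}\le1$ and $1-x\le e^{-x}$). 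On the other hand $D_{h}$, having non-negative coefficients, is increasing on $[0,1]$, so $D_{h}(z)\le D_{h}(1)=\Pr(H(\TT)\ge h+1)\le c_{0}/(h+1)$ by Lemma~\ref{thm:kolmogorov}. Altogether,
\[
D_{h}(z)\ \le\ \min\!\Big\{\tfrac{c_{0}}{h+1},\ e^{-c_{1}h\sqrt{1-z}}\Big\},\qquad z\in[0,1).
\]

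\textbf{Coefficient extraction.} A crude Cauchy bound $[z^{n}]D_{h}\le\rho^{-n}D_{h}(\rho)$ at $\rho=1-1/n$ only gives $e^{-ch/\sqrt n}$, which fails in the band $\sqrt n\lesssim h\lesssim\sqrt{n\log n}$: one genuinely needs the Gaussian factor, so the plan is singularity analysis. First, $B$ (hence each $D_{h}$) continues analytically to a $\Delta$-domain at $z=1$ --- assuming $\xi$ aperiodic, the periodic case being reduced by the usual change of variable on the lattice of possible values of $|\TT|$. Next, rewrite the recursion as $D_{h+1}=g(z)D_{h}-\tfrac{z}{2}\phi''(B(z))\,D_{h}^{2}+O(D_{h}^{3})$ and solve it uniformly in $h$ in a $\Delta$-neighborhood of $z=1$: treating it as a Riccati recursion in $1/D_{h}$, one obtains for $w=1-z\to0$ the matched expansion $D_{h}(1-w)=\tfrac{2}{\sigma^{2}h}-\tfrac{\sqrt2}{\sigma}\sqrt w+\cdots$ when $h^{2}w\ll1$ and $D_{h}(1-w)\asymp\tfrac{2\sqrt2}{\sigma}\sqrt w\,e^{-\sigma\sqrt2\,h\sqrt w}$ when $h^{2}w\gtrsim1$. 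Feeding this into the transfer theorem --- using $[z^{n}]\sqrt{1-z}\sim-\tfrac{1}{2\sqrt\pi}n^{-3/2}$ and $[z^{n}]\big(\sqrt{1-z}\,e^{-a\sqrt{1-z}}\big)\asymp a^{2}n^{-5/2}e^{-a^{2}/(4n)}$ for $a\gtrsim\sqrt n$, with $a=\sigma\sqrt2\,h$ --- gives $[z^{n}]D_{h}\asymp n^{-3/2}$ for $h\lesssim\sqrt n$ and $[z^{n}]D_{h}\asymp\sigma h^{2}n^{-5/2}e^{-\sigma^{2}h^{2}/(2n)}$ for $h\gtrsim\sqrt n$. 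Dividing by $[z^{n}]B$ and using $\tfrac{h^{2}}{n}e^{-\sigma^{2}h^{2}/(2n)}\le Ce^{-\sigma^{2}h^{2}/(4n)}$ absorbs the polynomial prefactor, yielding $\Pr(H(\TT)\ge h+1\mid|\TT|=n)\le C\,e^{-\sigma^{2}h^{2}/(4n)}$, the bound being trivial once $h\lesssim\sqrt n$.

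\textbf{Main obstacle and an alternative.} The one real difficulty is the uniform-in-$h$ singularity analysis of $D_{h}$: rigorously controlling the Riccati recursion near $z=1$ (especially the error terms), and the transfer step in the transitional band $h\asymp\sqrt n$ where the two regimes of $D_{h}$ must be patched. A softer, purely probabilistic alternative avoids generating functions: one may assume $h\ge K\sqrt n$ (the bound being trivial otherwise), chop the first $h$ levels into $\asymp h^{2}/n$ blocks of length $\asymp n/h$, and use Lemma~\ref{thm:kolmogorov} together with the second-moment bound $\Var Z_{\ell}=\sigma^{2}\ell$ on generation sizes to show that, conditioned on reaching the top of a block, the tree both contributes order (block length)$^{2}$ extra vertices and survives to the next block only with probability bounded away from $1$; iterating over the $\asymp h^{2}/n$ blocks produces the factor $e^{-ch^{2}/n}$. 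There the obstacle is instead bookkeeping: carrying the constraint $|\TT|=n$ coherently through the successive blocks (one works with $|\TT|\le n$ and with the unconditioned i.i.d.\ subtrees hanging below each block).
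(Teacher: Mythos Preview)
The paper does not prove this lemma: it is stated in the preliminaries of Section~\ref{s.GW} with a citation to \cite{Janson2011} and \cite{Kolchin1986}, and is then used as a black box (only once, in deriving~\eqref{eq:GW_size_T_bound}). So there is no ``paper's own proof'' to compare against.

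Your sketch is a reasonable roadmap toward the result, and you are honest about where the real work lies. A few remarks.

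\textbf{On the generating-function route.} The recursion $B_{h+1}=z\phi(B_h)$, the convexity bound $D_{h+1}\le z\phi'(B)D_h$, and the resulting $D_h(z)\le e^{-c_1 h\sqrt{1-z}}$ on $[0,1)$ are all correct. Your observation that the crude Cauchy bound is not enough is also right, though note that optimising $\rho$ already gives the Gaussian exponent: minimising $\rho^{-n}e^{-c_1 h\sqrt{1-\rho}}$ over $\rho\in(0,1)$ yields $\exp(-c_1^2h^2/(4n))$. The reason this is still insufficient is that you then divide by $\Pr(|\TT|=n)\asymp n^{-3/2}$, leaving an extra $n^{3/2}$ factor that spoils the bound precisely in the band $\sqrt n\lesssim h\lesssim\sqrt{n\log n}$. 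So you genuinely need the $n^{-3/2}$ from singularity analysis, and your identification of the uniform-in-$h$ Riccati control near $z=1$ as the main obstacle is accurate. One caveat you pass over too quickly: for $B$ to extend analytically to a $\Delta$-domain you need more than $\sigma^2<\infty$ on $\xi$; some regularity of $\phi$ near its radius of convergence is required. Under the bare finite-variance hypothesis of the lemma this route is not obviously available.

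\textbf{On the probabilistic alternative.} This is closer to how the cited references (in particular the Addario-Berry--Devroye--Janson argument behind \cite{Janson2011}) actually proceed, and it works under finite variance alone. The block decomposition you outline is the right idea; the bookkeeping you flag is indeed the delicate part, and the standard way around conditioning on $|\TT|=n$ exactly is to work with the depth-first or breadth-first encoding walk and use a cycle lemma / Vervaat-type argument to pass between the conditioned tree and an unconditioned bridge.

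In short: both approaches you describe are viable and are essentially those of the cited sources; neither is carried to completion here, and you correctly name the gap in each. Since the paper only quotes the lemma, your level of detail already exceeds what the paper provides.
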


We consider the set of CGW trees \textit{conditioned on $Z_N>0$}, where $N>0$, as the subset of trees $\langle GW \rangle$ with height at least $N$. Denote this set of conditioned CGW trees as $\langle GW \big| Z_N>0\rangle$. The expected limit size of the $k$'th generation in such trees is given in the following Theorem \cite{MeirMoon1978}.
\begin{lemma}\label{thm:exp_Z_k}
Let $\TT$ be a CGW tree with offspring distribution $\xi$, such that $\Var(\xi):=\sigma^2<\infty$. Then we have:
$$\lim_{N\to\infty}\Es{Z_k|Z_N>0} = 1+k\sigma^2.$$
\end{lemma}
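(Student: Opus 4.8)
The statement to prove is Lemma~\ref{thm:exp_Z_k}: for a critical Galton-Watson tree with offspring variance $\sigma^2 < \infty$,
$$\lim_{N\to\infty}\Es{Z_k \mid Z_N>0} = 1+k\sigma^2.$$
The natural route is to pass to the size-biased / Kesten tree limit. Recall (Kesten's theorem, cited in the excerpt as \cite{Kesten1986}) that conditioning a critical GW tree on survival to generation $N$ and letting $N\to\infty$ yields a weak limit $\TT^\infty$, the \emph{size-biased tree}: there is a distinguished infinite ``spine'' of vertices, the spine vertex in each generation has offspring distributed according to the size-biased law $\widehat\xi$ (with $\Ps{\widehat\xi=j}=j\,\Ps{\xi=j}/\Es{\xi}=j\,\Ps{\xi=j}$ since $\Es{\xi}=1$), exactly one of those children (chosen uniformly) continues the spine, and every non-spine child starts an independent ordinary critical GW tree. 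Since $Z_k$ depends only on the first $k$ generations — a finite, hence continuity, set in the local topology — weak convergence $\TT^N \Rightarrow \TT^\infty$ already gives $Z_k \mid (Z_N>0) \Rightarrow Z_k(\TT^\infty)$ in distribution. The plan is therefore: (i) invoke this weak convergence; (ii) upgrade it to convergence of expectations via a uniform integrability estimate; (iii) compute $\Es{Z_k(\TT^\infty)}$ directly from the spine decomposition.

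For step (iii): in the size-biased tree write $Z_k = 1 + W_k$, where the $1$ counts the spine vertex at level $k$ and $W_k$ counts the non-spine vertices at level $k$. Each non-spine vertex at level $k$ sits at the top of a critical GW subtree that was planted by a non-spine child of some spine vertex at level $j$, $0\le j\le k-1$, and has then evolved for $k-1-j$ further generations. A spine vertex has $\widehat\xi$ children, one of which is on the spine, so it emits $\widehat\xi-1$ independent critical GW trees; a critical GW tree started from a single particle has expected size $1$ in every generation (since $\Es{Z_m}=\Es{\xi}^m=1$). Hence, by Wald / linearity,
$$\Es{W_k}=\sum_{j=0}^{k-1}\Es{\widehat\xi-1}\cdot 1 = k\,\big(\Es{\widehat\xi}-1\big).$$
Finally $\Es{\widehat\xi}=\Es{\xi^2}/\Es{\xi}=\Es{\xi^2}=\Var(\xi)+(\Es{\xi})^2=\sigma^2+1$, so $\Es{\widehat\xi}-1=\sigma^2$ and $\Es{Z_k(\TT^\infty)}=1+k\sigma^2$, as claimed.

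For step (ii), the uniform integrability: I need $\sup_N \Es{Z_k \mathbf{1}_{\{Z_k > M\}} \mid Z_N > 0}\to 0$ as $M\to\infty$. This is where I expect the only real work. By Lemma~\ref{thm:kolmogorov}, $\Ps{Z_N>0}\sim 2/(\sigma^2 N)$, so $\Es{Z_k\mathbf{1}_{\{Z_k>M\}}\mid Z_N>0} = \Es{Z_k\mathbf{1}_{\{Z_k>M\}}\mathbf{1}_{\{Z_N>0\}}}/\Ps{Z_N>0} \le (\sigma^2 N/2)(1+o(1))\,\Es{Z_k\mathbf{1}_{\{Z_k>M\}}\mathbf{1}_{\{Z_N>0\}}}$. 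Bounding the numerator: condition on the first $k$ generations; given $Z_k=z$, the event $\{Z_N>0\}$ has probability at most $z\,\Ps{Z_{N-k}>0}\le z\cdot c/(N-k)$ by a union bound over the $z$ subtrees and Lemma~\ref{thm:kolmogorov} again (for $N$ large relative to the fixed $k$, $N-k\ge N/2$). Therefore $\Es{Z_k\mathbf{1}_{\{Z_k>M\}}\mathbf{1}_{\{Z_N>0\}}}\le (2c/N)\,\Es{Z_k^2\mathbf{1}_{\{Z_k>M\}}}$, and the $N$-factors cancel, leaving $\Es{Z_k\mathbf{1}_{\{Z_k>M\}}\mid Z_N>0}\le C_k\,\Es{Z_k^2\mathbf{1}_{\{Z_k>M\}}}$ uniformly in $N$. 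Since $\xi$ has finite variance, $Z_k$ has finite second moment for each fixed $k$ (indeed $\Es{Z_k^2}$ is finite by the standard branching-process recursion $\Es{Z_{k+1}^2}=\Es{Z_k^2}+\sigma^2\Es{Z_k}$ in the critical case), so $\Es{Z_k^2\mathbf{1}_{\{Z_k>M\}}}\to 0$ as $M\to\infty$, giving the required uniform integrability. Combined with the weak convergence from step (i) and the computation in step (iii), this yields $\lim_N \Es{Z_k\mid Z_N>0}=\Es{Z_k(\TT^\infty)}=1+k\sigma^2$. (Alternatively, one can avoid invoking the Kesten tree altogether and compute $\Es{Z_k\mathbf{1}_{\{Z_N>0\}}}$ asymptotically by the same conditioning argument, pushing through exact branching-process identities; but the spine picture makes the answer $1+k\sigma^2$ transparent and is the cleaner write-up.)
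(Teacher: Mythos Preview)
Your proof is correct. Note, however, that the paper does not actually prove this lemma: it is quoted from \cite{MeirMoon1978} and used as a black box. So there is no ``paper's own proof'' to compare against in the strict sense.

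That said, your argument is the natural modern one and is fully rigorous. The three ingredients --- Kesten's local convergence $\TT^N\Rightarrow\TT^\infty$ (giving $Z_k\mid Z_N>0 \Rightarrow Z_k(\TT^\infty)$ in law, since $Z_k$ depends only on finitely many generations), the uniform integrability bound
\[
\Es{Z_k\,\mathbf{1}_{\{Z_k>M\}}\mid Z_N>0}\;\le\;\frac{\Ps{Z_{N-k}>0}}{\Ps{Z_N>0}}\,\Es{Z_k^2\,\mathbf{1}_{\{Z_k>M\}}}\;\le\;C\,\Es{Z_k^2\,\mathbf{1}_{\{Z_k>M\}}}\to 0,
\]
and the spine computation $\Es{Z_k(\TT^\infty)}=1+k(\Es{\widehat\xi}-1)=1+k\sigma^2$ --- fit together exactly as you describe. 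The UI step is the only place any care is needed, and your bound via $\Ps{Z_N>0\mid Z_k=z}\le z\,\Ps{Z_{N-k}>0}$ together with $\Es{Z_k^2}<\infty$ is precisely the right move.

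One remark worth making: the paper, in the course of proving Proposition~\ref{pr:coupling_T_T*}, does compute $\Es{Z_k\mid\TT^\infty}=\sum_T(\#T_k)^2\,\Ps{\TT[k]=T}=1+k\sigma^2$ directly, i.e.\ it identifies the limit with the unconditioned second moment $\Es{Z_k^2}$. This is the same number you get from the spine picture, via the well-known critical recursion $\Es{Z_{k+1}^2}=\Es{Z_k^2}+\sigma^2$, and gives a slightly shorter route to step~(iii) that avoids introducing $\widehat\xi$ explicitly. Either computation is fine.
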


Kesten in \cite{Kesten1986} proved that CGW trees, conditioned on $Z_N>0$, converge in distribution to an infinite CGW tree $\TT^{\infty}$, which is the geneological tree of the critical Galton-Watson process conditioned on \textit{non-extinction}. The infinite tree has the following construction. The tree $\TT^{\infty}$ has two types of vertices: \textit{normal} and \textit{special}, with root being special. Normal vertices have offsprings according to independent copies of $\xi$, while special nodes have a number of offsprings according to the size-biased distribution $\hat{\xi}$, where
$$\Ps{\hat{\xi}=k}:=kp_k,$$
and $k=0,1,2,\dots$. Every offspring of a normal vertex is normal. When a special vertex produces a number of offsprings, one of its children is selected uniformly at random and becomes special, while all other children are normal.

An alternative construction of the tree $\TT^{\infty}$ is to start by taking an infinite path $\gamma$ of special vertices from the root, which is called a \textit{spine}, and then attach $\nu=\hat{\xi}-1$ independent CGW trees at each node of the spine. Since each CGW tree is a.s. finite, it follows that $\TT^{\infty}$ a.s. has exactly one infinite path from the root, viz. the spine.

\subsection{Speed of convergence to the infinite tree}

We write as $(\TT[k]=T)$ and $(\TT^{\infty}[k]=T)$ the event that the first $k$ generations of the tree $\TT$ and $\TT^{\infty}$ respectively match the first $k$ generations of a given tree $T$. Denote by $\#T_k$ the size of $k$'th generation in the tree $T$. The following lemma holds for the trees $\TT$ and $\TT^{\infty}$~\cite{Kesten1986}.
\begin{lemma}\label{lem:measure}
Let $\TT$ be a CGW tree with offspring distribution $\xi$. Then, for any rooted vertex-labeled tree $T$ of at least $k$ generations:
$$\lim_{N\to\infty}\Ps{\TT[k]=T|Z_N>0} = \#T_k\cdot \Ps{\TT[k]=T}.$$
Then
$$\Ps{\TT^{\infty}[k]=T}=\#T_k\cdot \Ps{\TT[k]=T}.$$
\end{lemma}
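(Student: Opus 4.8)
The plan is to establish the stated identity $\Ps{\TT^{\infty}[k]=T}=\#T_k\cdot \Ps{\TT[k]=T}$ as a direct consequence of the limit statement in the first half of the lemma, which we are allowed to take as given (it is Kesten's result, cited as \cite{Kesten1986}). The key observation is that the left-hand side, $\Ps{\TT^{\infty}[k]=T}$, does not depend on $N$ at all, while the right-hand side is the limit as $N\to\infty$ of $\Ps{\TT[k]=T\mid Z_N>0}$. So the entire content is to check that $\Ps{\TT[k]=T\mid Z_N>0}$ genuinely converges to $\Ps{\TT^{\infty}[k]=T}$, i.e. that the finite conditioned trees converge in the sense of finite-dimensional marginals to $\TT^{\infty}$, and then to read off the limit from the first displayed formula.

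Concretely, first I would note that Kesten's theorem (the convergence in distribution of $\langle GW\mid Z_N>0\rangle$ to $\TT^{\infty}$, recalled in Subsection~\ref{ss.GWprelim}) gives, for every fixed labelled tree $T$ with at least $k$ generations,
$$\lim_{N\to\infty}\Ps{\TT[k]=T\mid Z_N>0}=\Ps{\TT^{\infty}[k]=T}.$$
Indeed, the event $\{\TT[k]=T\}$ is a cylinder event depending only on the first $k$ generations, hence is both open and closed in the natural topology on locally finite rooted labelled trees, so weak convergence forces convergence of its probability. Second, I would combine this with the first display of the lemma, $\lim_{N\to\infty}\Ps{\TT[k]=T\mid Z_N>0}=\#T_k\cdot\Ps{\TT[k]=T}$. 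Since a sequence has at most one limit, the two right-hand sides must agree, giving $\Ps{\TT^{\infty}[k]=T}=\#T_k\cdot\Ps{\TT[k]=T}$, which is exactly the claim.

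Alternatively — and this is probably the cleaner route to present, avoiding any appeal to the topology of trees — one can prove the second identity directly from the explicit two-type construction of $\TT^{\infty}$ described above, and then the first identity becomes a corollary rather than an input. The computation is a telescoping product along the spine: condition on which vertex of generation $k$ lies on the spine (there are $\#T_k$ choices, all visible in $T$), and multiply the probabilities along the way. At each spine vertex of out-degree $d$ in $T$, the size-biased law $\hat\xi$ contributes $\Ps{\hat\xi=d}=d\,p_d$, whereas an ordinary GW vertex contributes $p_d$; the ratio is exactly $d$. One of those $d$ children continues the spine, chosen uniformly, contributing a factor $1/d$; the remaining $d-1$ subtrees, and all off-spine subtrees throughout, are ordinary GW and contribute the same factors as in $\Ps{\TT[k]=T}$. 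Thus each spine vertex contributes a net factor $d\cdot(1/d)=1$ except that summing over the $\#T_k$ choices of which generation-$k$ vertex is the spine vertex produces the overall factor $\#T_k$; more precisely, the product of the branching factors $d_v$ along any root-to-generation-$k$ path telescopes against the uniform-choice factors $1/d_v$, and summing the $1$'s over the $\#T_k$ admissible paths yields $\#T_k\cdot\Ps{\TT[k]=T}$.

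The main obstacle, if one goes the first route, is purely a bookkeeping/formalization point: making precise that $\{\TT[k]=T\}$ is a continuity set for the weak limit, or equivalently that convergence in distribution of trees implies convergence of all cylinder probabilities — this is standard but must be stated. If one goes the second route, the main obstacle is the careful telescoping argument identifying the product of size-bias factors with the generation count $\#T_k$; there is no real difficulty, only the need to be careful that off-spine subtrees are counted with exactly the GW weights and that the uniform spine-continuation choices cancel the size-bias ratios, leaving the pure combinatorial factor $\#T_k$. Either way, no step requires more than elementary manipulation once Kesten's structural description is in hand.
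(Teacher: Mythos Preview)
The paper does not actually prove this lemma: it is quoted verbatim as a result from \cite{Kesten1986}, with no argument given beyond the citation. So there is nothing to compare your proof against on the paper's side.

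Your proposal is correct, and both routes you sketch are standard. A small caveat on the first route: the ``first display'' (the limit identity) is already essentially the statement that the conditioned trees converge locally to $\TT^\infty$, so invoking Kesten's convergence theorem to deduce the second display from the first is slightly circular --- the two displays are really two faces of the same result. Your second route, the direct telescoping computation from the spine construction, is the cleaner self-contained argument and is how one normally verifies the size-biased formula; your description of the cancellation $d_v\cdot(1/d_v)=1$ at each spine vertex, with the sum over $\#T_k$ admissible spine endpoints producing the factor $\#T_k$, is exactly right.
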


It is natural that as $N\to\infty$ the conditioned tree $\TT^N:=(\TT|Z_N>0)$ and $\TT^{\infty}$ w.h.p.~start looking similar. The question now is how large (as a function of $N$) that similar part is. The following useful proposition answers this question. Versions of this proposition, with the tree conditioned to  have exactly $N$ vertices, have appeared in \cite[Theorem 5]{Kersting} and \cite[Theorem 6.5]{StuflerEnriched}, but we have not found our exact statement in the literature, hence include a proof.

\begin{proposition}\label{pr:coupling_T_T*}
Let $\TT^N$ be a CGW tree conditioned on $Z_N>0$ and $\TT^{\infty}$ be an infinite CGW tree. Then, as $N\to\infty$, for any $\varepsilon>0$ there exist $\delta>0$ and a coupling between $\TT^N$ and $\TT^{\infty}$, such that
$$\Ps{\TT^N[\delta N]\neq \TT^{\infty}[\delta N]}<\varepsilon.$$
\end{proposition}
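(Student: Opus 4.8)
The plan is to use the characterization of $\TT^\infty$ via a spine $\gamma$ with i.i.d.\ CGW trees $\TT^{(v)}$ hanging off its vertices (the alternative construction from Subsection~\ref{ss.GWprelim}), and the exact measure-comparison formula from Lemma~\ref{lem:measure}, namely $\Ps{\TT^\infty[k]=T}=\#T_k\cdot\Ps{\TT[k]=T}$. The key structural fact is that the first $k$ generations of $\TT^\infty$ are determined by the first $k$ spine vertices together with the truncations to level $k$ of the bushes hanging from them; in particular $\TT^\infty[k]$ agrees with $\TT^N[k]$ as long as (a) the law of $\TT^N[k]$ is close in total variation to that of $\TT^\infty[k]$, and (b) under the $\TT^\infty$ picture, none of the finitely many bushes hanging from the first $\delta N$ spine vertices reaches down to level $\delta N$ (so that the conditioning $Z_N>0$ would not ``notice'' anything extra).

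First I would quantify (a). Fix a level $k=k(N)$. By Lemma~\ref{lem:measure}, $\Ps{\TT[k]=T\mid Z_N>0}\to \#T_k\,\Ps{\TT[k]=T}=\Ps{\TT^\infty[k]=T}$ pointwise in $T$; since these are probability measures on the countable set of $k$-generation trees, Scheff\'e's lemma upgrades pointwise convergence to convergence in total variation, for each \emph{fixed} $k$. To make $k$ grow with $N$ I would instead work with the explicit ratio: conditioning on $Z_N>0$ multiplies the weight of a $k$-generation tree $T$ by $\Ps{Z_N>0\mid \TT[k]=T}/\Ps{Z_N>0}$, which by Lemma~\ref{thm:kolmogorov} and a union bound over the $\#T_k$ vertices at level $k$ equals $\#T_k\cdot\frac{2}{\sigma^2 (N-k)}\big(1+o(1)\big)\big/\big(\frac{2}{\sigma^2 N}(1+o(1))\big)$ up to correction terms controlled by the probability that two different level-$k$ vertices both have descendants at level $N$, which is $O\big(\#T_k^2 (N-k)^{-2}\big)$ and negligible as long as $\E[\#T_k\mid Z_N>0]=O(k)$ (Lemma~\ref{thm:exp_Z_k}) and $k\ll \sqrt N$. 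So for $k=\delta N$ this argument is too lossy; the honest route is: pick a slowly growing cutoff $k_0=k_0(N)\to\infty$ with $k_0=o(\sqrt N)$, couple $\TT^N[k_0]$ and $\TT^\infty[k_0]$ perfectly with probability $1-o(1)$ via Strassen/total-variation, and then extend the coupling from level $k_0$ to level $\delta N$ by generation.

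Second, for the extension I would run both trees forward from their (coupled, hence equal) $k_0$-th generation. In $\TT^\infty$, beyond level $k_0$ the tree is still a spine with i.i.d.\ finite CGW bushes; in $\TT^N$ it is a CGW tree conditioned on reaching level $N$, which from level $k_0$ on is a CGW forest conditioned so that \emph{at least one} of the current $Z_{k_0}$ living particles has a descendant line reaching level $N$. The coupling continues to agree provided: (i) in $\TT^\infty$, exactly one of the $Z_{k_0}$ particles is the spine vertex and its line survives forever while \emph{all the bushes} hanging from the first $\delta N$ spine vertices die out before level $\delta N$; and (ii) in $\TT^N$, exactly one of the $Z_{k_0}$ particles carries the line to level $N$ and the rest die out before level $\delta N$. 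The probability that a single CGW tree has height $\ge \delta N$ is $\asymp (\delta N)^{-1}$ by Lemma~\ref{thm:kolmogorov}, and there are $O(\delta N)$ bushes (each of expected size $O(1)$ per spine vertex, with $\delta N$ spine vertices), so the expected number of ``bad'' tall bushes is $O(\delta)$ — small if $\delta$ is small. Similarly, conditioned on $Z_N>0$ the chance that two distinct lineages survive to level $N$ is $O(1)$ but can be made $<\varepsilon$ by... actually it is \emph{not} small, so here one must be more careful: the correct statement is that the discrepancy between the two measures at level $\delta N$ is governed by the event that the spine decomposition of $\TT^N$ itself already branches (a second lineage to level $N$ appears) within the first $\delta N$ levels, and by Lemma~\ref{thm:width} (the Gaussian tail $\Ps{H(\TT)\ge x\mid |\TT|=n}\le Ce^{-cx^2/n}$ for the i.i.d.\ bushes of typical size $O(1)$) plus Lemma~\ref{thm:kolmogorov} this has probability $O(\delta)$, uniformly in $N$.

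Putting these together: choose $\delta$ small enough that all the $O(\delta)$ error terms sum to $<\varepsilon/2$, choose $k_0=k_0(N)\to\infty$ slowly so that the initial total-variation coupling costs $<\varepsilon/2$, and take $N$ large. The main obstacle I anticipate is making the ``extension from $k_0$ to $\delta N$'' rigorous: one has to set up the right coupling between a CGW forest conditioned to reach a far level $N$ and Kesten's spine, and control — uniformly in $N$ and in the growing window — the probability that the conditioned tree develops a second long lineage or a long bush in the window $[k_0,\delta N]$. The cleanest implementation is probably to condition further on the exact size $|\TT^N|$ (which is $\asymp N^2$ with the relevant tails by Lemmas~\ref{thm:size_trees} and~\ref{thm:width}) and invoke the already-cited size-conditioned coupling results of \cite{Kersting, StuflerEnriched} as a black box on that event, then integrate over the size; but writing it out so that the $\delta$ is genuinely uniform as $N\to\infty$ is the delicate part.
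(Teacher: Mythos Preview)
You abandon the direct total-variation argument at level $k=\delta N$ too early. Your claim that the correction terms are ``negligible as long as $\E[\#T_k\mid Z_N>0]=O(k)$ and $k\ll\sqrt N$'' is the wrong conclusion from the right computation. Write out the Bayes ratio as you did: using Lemma~\ref{thm:kolmogorov} and the union/inclusion--exclusion bounds on $\Ps{\exists i:\ Z^{(i)}_{N-k}>0}$, one gets
\[
\Big|\Pns{\TT[k]=T}-\Ps{\TT^{\infty}[k]=T}\Big|\ \le\ \#T_k\,\Ps{\TT[k]=T}\left(\frac{k}{N-k}+\varepsilon\,\frac{N}{N-k}+C\,\frac{\#T_k\,N}{(N-k)^2}\right).
\]
Summing over all level-$k$ trees $T$ uses exactly two identities: $\sum_T \#T_k\,\Ps{\TT[k]=T}=1$ and $\sum_T (\#T_k)^2\,\Ps{\TT[k]=T}=\E[Z_k^2]=1+k\sigma^2$. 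The second identity is precisely Lemma~\ref{thm:exp_Z_k} (that expectation is $\E[Z_k\mid\TT^\infty]$, i.e.\ the size-biased first moment, which equals the unconditioned second moment). Plugging in $k=\delta N$ gives a total-variation bound of order
\[
\frac{\delta}{1-\delta}+\frac{\varepsilon}{1-\delta}+\frac{C\sigma^2\delta}{(1-\delta)^2}+O(1/N),
\]
which is $O(\delta)+o(1)$. No restriction $k\ll\sqrt N$ is needed; the point is that $\E[Z_k^2]$ grows \emph{linearly} in $k$, not quadratically, so the ``two lineages survive'' correction, once averaged, is $O(k/N)$ rather than $O(k^2/N^2)$. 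You cite Lemma~\ref{thm:exp_Z_k} but do not actually use it for this summation --- that is the missing step.

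Consequently your entire second stage (couple at $k_0=o(\sqrt N)$, then extend generation by generation to $\delta N$, worrying about second long lineages and invoking \cite{Kersting,StuflerEnriched} as a black box) is unnecessary. It is also, as you yourself note, not rigorous as written: the ``extension'' step is exactly the hard part, and outsourcing it to size-conditioned results defeats the purpose of proving the height-conditioned statement. The paper's proof is precisely the one-step TV bound you discarded, carried out carefully with the upper and lower bounds on $\Pns{\TT[k]=T}$ and then summed using $\E[Z_k^2]=1+k\sigma^2$; Strassen then gives the coupling.
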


\begin{proof}
In order to prove the statement of the lemma we show that the conditioned measure and the infinite measure are close in total variation distance. First we establish bounds on the conditioned measure. Consider a rooted tree $T$ with height $k$, where $k\leqslant\delta N$ and $\delta>0$ is small. Then by Bayes' formula,
\begin{equation}\label{eq:bayes}
\begin{split}
\Ps{\TT[k]=T|Z_N>0} &= \frac{\Ps{Z_N>0|\TT[k]=T}}{\Ps{Z_N>0}}\Ps{\TT[k]=T} \\
&= \frac{\Ps{Z^{(1)}_{N-k}>0\cup \dots\cup Z^{(\#T_k)}_{N-k}>0}}{\Ps{Z_N>0}}\Ps{\TT[k]=T},
\end{split}
\end{equation}
where $Z^{(i)}_{N-k}$ denotes the $(N-k)$'th generation in the copy of the CGW process $Z^{(i)}$, started from a vertex at level $k$. By Lemma~\ref{thm:kolmogorov}, for a large enough $N$ there exists $\varepsilon_0>0$ such that,
\begin{equation}\label{eq:kolmog_lim_est}
\frac{2}{\sigma^2 N}(1-\varepsilon_0)<\Ps{Z_N>0}<\frac{2}{\sigma^2 N}(1+\varepsilon_0).
\end{equation}
Also, when $N-k$ is large enough, there exists $\varepsilon_1>0$, such that
\begin{equation}\label{eq:kolmog_lim_est2}
\frac{2}{\sigma^2 (N-k)}(1-\varepsilon_1)<\Ps{Z^{(i)}_{N-k}>0}<\frac{2}{\sigma^2 (N-k)}(1+\varepsilon_1),
\end{equation}
where $1\leqslant i\leqslant \#T_k$. In order to simplify the further calculations we take common $\varepsilon_2:=\max(\varepsilon_0,\varepsilon_1)$ instead of $\varepsilon_0$ and $\varepsilon_1$ in \eqref{eq:kolmog_lim_est} and \eqref{eq:kolmog_lim_est2}, and the conditioned measure is denoted as $\Pns{\cdot}:=\Ps{\cdot\md Z_N>0}$. 

\medskip
\noindent\textbf{Upper bound.} We use the union bound on the right-hand side of \eqref{eq:bayes} and together with \eqref{eq:kolmog_lim_est} and \eqref{eq:kolmog_lim_est2}, we obtain
\begin{equation}\label{eq:estim1}
\begin{split}
\Pns{\TT[k]=T} &\leqslant \frac{\#T_k\Ps{Z_{N-k}>0}}{\Ps{Z_N>0}}\Ps{\TT[k]=T} \\
&< \frac{N}{(N-k)}\#T_k\Ps{\TT[k]=T}\frac{1+\varepsilon_2}{1-\varepsilon_2}.
\end{split}
\end{equation}
Therefore, we can write that for small enough $k$ there exists $\varepsilon_3>0$, such that
\begin{equation}\label{eq:estim2}
\Pns{\TT[k]=T} < \frac{N}{(N-k)}\#T_k\Ps{\TT[k]=T}(1+\varepsilon_3).
\end{equation}

\noindent\textbf{Lower bound.} We rewrite \eqref{eq:bayes} using \eqref{eq:estim2} as follows:
\begin{equation}\label{eq:bayes1}
\begin{split}
\Pns{\TT[k]=T} &= \frac{1}{\Ps{Z_N>0}}\left(1-\Ps{Z^{(1)}_{N-k}=0\cap \dots\cap Z^{(\#T_k)}_{N-k}=0}\right)\Ps{\TT[k]=T} \\
&=\frac{1}{\Ps{Z_N>0}}\left(1-\left(1-\Ps{Z_{N-k}>0}\right)^{\#T_k}\right)\Ps{\TT[k]=T} \\
&>\frac{1}{\Ps{Z_N>0}}\left(1-\left(1-\frac{2(1-\varepsilon_1)}{\sigma^2 (N-k)}\right)^{\#T_k}\right)\Ps{\TT[k]=T}.
\end{split}
\end{equation}
Since for any $x$, where $x>0$, we have $1-x< \exp(-x)<1-x+x^2/2$, then for any $n\geqslant 1$
\begin{equation}\label{eq:exponent2}
1-(1-x)^n>1-\exp(-nx)>nx-\frac{(nx)^2}{2}.
\end{equation}
We rewrite \eqref{eq:bayes1} using \eqref{eq:exponent2} for $x=\Ps{Z_{N-k}>0}$ and $n=\#T_k$ as follows:
\begin{equation}\label{eq:bayes2}
\begin{split}
\Pns{\TT[k]=T} &>\frac{\Ps{\TT[k]=T}}{\Ps{Z_N>0}}\left(\#T_k\Ps{Z_{N-k}>0}-\frac{1}{2}\left(\#T_k\Ps{Z_{N-k}>0}\right)^2\right).
\end{split}
\end{equation}
Now use \eqref{eq:kolmog_lim_est2} and we obtain:  
\begin{equation}\label{eq:bayes3}
\begin{split}
\Pns{\TT[k]=T} &> \frac{\Ps{\TT[k]=T}}{\Ps{Z_N>0}}\left(\frac{2\#T_k(1-\varepsilon_2)}{\sigma^2 (N-k)}-\frac{1}{2}\left(\frac{2\#T_k(1+\varepsilon_2)}{\sigma^2 (N-k)}\right)^2\right) \\
&>\Ps{\TT[k]=T}\#T_k\left(\frac{N}{N-k}\frac{1-\varepsilon_2}{1+\varepsilon_2}-\#T_k\frac{C_1N}{(N-k)^2}\right),
\end{split}
\end{equation}
where $C_1=\frac{(1+\varepsilon_{2})}{2\sigma^2}<1/\sigma^2$. Therefore, we can write that for small enough $k$ there exists $\varepsilon_4>0$ and a bounded $C_2>0$, that depends on $\sigma$ and $\varepsilon_2$, such that
\begin{equation}\label{eq:estim3}
\Pns{\TT[k]=T} > \Ps{\TT[k]=T}\#T_k\left(\frac{N}{N-k}-\#T_k\frac{C_2 N}{(N-k)^2}\right)(1-\varepsilon_4).
\end{equation}
Combining the \eqref{eq:estim2} and \eqref{eq:estim3}, and choosing $\varepsilon_5:=\max\{\varepsilon_3,\varepsilon_4\}$, we obtain the following bounds on the probability $\Pns{\TT[k]=T}$:
\begin{equation}\label{eq:bound_on_p_inf}
\left(\frac{N}{N-k}-\#T_k\frac{C_2N}{(N-k)^2}\right)(1-\varepsilon_5)\leqslant\frac{\Pns{\TT[k]=T}}{\#T_k\Ps{\TT[k]=T}}\leqslant\frac{N}{(N-k)}(1+\varepsilon_5).
\end{equation}

\noindent\textbf{Total variation distance.} Now we bound the total variation distance between conditioned and infinite measures. From the upper bound in \eqref{eq:bound_on_p_inf} we obtain that when $k$ is small enough, the following inequality holds:
\begin{equation*}
\begin{split}
\Pns{\TT[k]=T}- \Ps{\TT^{\infty}[k]=T}& \leqslant \left(\frac{N}{N-k}(1+\varepsilon_5)-1\right)\#T_k\Ps{\TT[k]=T} \\
&=\left(\left(\frac{N}{N-k}-1\right)+ \frac{N}{N-k}\varepsilon_5\right)\#T_k\Ps{\TT[k]=T},
\end{split}
\end{equation*}
and, on the other hand, from the lower bound in \eqref{eq:bound_on_p_inf} we obtain
\begin{equation*}
\begin{split}
\Ps{\TT^{\infty}[k]=T}- \Pns{\TT[k]=T}\leqslant & \left( 1-\frac{N}{N-k}(1-\varepsilon_5)+\#T_k\frac{C_{\varepsilon}N}{(N-k)^2}(1-\varepsilon_5)\right)\cdot \\ 
&\cdot \#T_k\Ps{\TT[k]=T} \\
=&\left(\left( 1-\frac{N}{N-k}\right)+\frac{N}{N-k}\varepsilon_5+\#T_k\frac{C_{\varepsilon}N}{(N-k)^2}(1-\varepsilon_5)\right)\cdot \\ 
&\cdot \#T_k\Ps{\TT[k]=T}.
\end{split}
\end{equation*}
Comparing both bounds we see that all summands are positive, except of $\left(1-\frac{N}{N-k}\right)$, thus we can inverse the sign it and derive the bound for an absolute value. Summing those bounds over all possible trees of height $k$, we obtain
\begin{equation}\label{eq:bound_measure1}
\begin{split}
\sum\limits_{T}\Big|\Pns{\TT[k]=T}- \Ps{\TT^{\infty}[k]=T}\Big|\leqslant & \sum\limits_{T}\left(\left( \frac{N}{N-k}-1\right)+\varepsilon_5\frac{N}{N-k}+\#T_k\frac{C_{\varepsilon}N}{(N-k)^2}(1-\varepsilon_5)\right)\cdot \\ 
&\cdot \#T_{k}\Ps{\TT[k]=T}.
\end{split}
\end{equation}
From the fact that we have a measure on the set of infinite trees and from Lemma~\ref{thm:exp_Z_k} we have 
$$\sum\limits_{T}\Ps{\TT^{\infty}[k]=T}=\sum\limits_{T}\#T_{k}\Ps{\TT[k]=T} = 1,$$ 
$$\Es{Z_k\md \TT^{\infty}}=\sum\limits_{T}(\#T_{k})^2 \Ps{\TT[k]=T} = 1+k\sigma^2.$$
Therefore we can rewrite \eqref{eq:bound_measure1} for $k=\delta N$, when $\delta>0$ is small, and obtain
\begin{equation*}
\begin{split}
\sum\limits_{T}\Big|\Pns{\TT[\delta N]=T}- \Ps{\TT^{\infty}[\delta N]=T}\Big|\leqslant & \frac{\delta}{1-\delta}+\frac{\varepsilon_5}{1-\delta}+C'_2\delta\frac{1-\varepsilon_5}{(1-\delta)^2}+\frac{C_2}{N}\frac{1-\varepsilon_5}{(1-\delta)^2},
\end{split}
\end{equation*}
where $C'_2=C_2\sigma^2<1$. Hence, for any $\varepsilon_6>0$ we can find large $N$ and small $\delta>0$, such that
\begin{equation}\label{eq:bound_abs}
\begin{split}
\sum\limits_{T}\Big|\Pns{\TT[\delta N]=T}- \Ps{\TT^{\infty}[\delta N]=T}\Big|& \leqslant \varepsilon_6.
\end{split}
\end{equation}
Denote the projection of measures $\Pn$ and $\Pinf$ onto the trees with common first $\delta N$ layers $\TT[\delta N]$ as $\Pn\restr{\delta N}$ and $\Pinf\restr{\delta N}$ respectively. Then, by \eqref{eq:bound_abs} and the definition of the total variation distance we have
\begin{equation*}
d_{TV}\left(\Pn\restr{\delta N},\Pinf\restr{\delta N}\right)\leqslant \frac{1}{2}\varepsilon_6.
\end{equation*}
Hence by Strassen's Theorem there exists a coupling of random variables $\TT[\delta N]$ and $\TT^{\infty}[\delta N]$, with the same $d_{TV}$. This finishes the proof of Proposition~\ref{pr:coupling_T_T*}.
\end{proof}

\subsection{Proof of Theorem~\ref{thm:GW_smoothing}}

\begin{proof} {\bf (1)} Consider first the infinite CGW tree $\TT^{\infty}$. Following the construction of $\TT^{\infty}$ we denote the spine as $\gamma$ and label the unconditioned CGW trees attached to the root as $t_1, t_2,\dots, t_{\nu}$, and the rest of the tree as $t_0$. The number of the unconditioned trees $\nu$ is represented by shifted size-biased i.i.d. random variables$\nu=\hat{\xi}-1$. Let $\mu:=\E\nu$, then it is straightforward to show that $\mu=\sigma^2<\infty$. By Markov's inequality for any given $C_1>0$,
$$\Ps{\nu>C_1} < \frac{\sigma^2}{C_1}.$$
Hence, for any $\varepsilon_1>0$ there exists $C_1>0$, such that
$$\Ps{\nu<C_1} > 1-\varepsilon_1.$$
Then, using the law of total probability we bound the total size of these trees in the following way:
\begin{equation}\label{eq:tree_size}
\begin{split}
\PB{\sum\limits_{i=1}^{\nu}|t_i|>C_1 K} &=\PB{\sum\limits_{i=1}^{\nu}|t_i|>C_1 K\md \nu<C_1}\Ps{\nu<C_1}\\
&\quad + \PB{\sum\limits_{i=1}^{\nu}|t_i|>C_1 K\md \nu>C_1}\Ps{\nu>C_1} \\
&< \PB{\sum\limits_{i=1}^{C_1}|t_i|>C_1 K} +\varepsilon_1.
\end{split}
\end{equation}
It remains to show that the total size of $C_1$ trees is bounded. Using the union bound we can write:
\begin{equation*}
\PB{\sum\limits_{i=1}^{C_1}|t_i|>C_1 K} < \Ps{\mbox{at least one of }|t_i|>K} < C_1\Ps{|t_i|>K},
\end{equation*}
and by Lemma~\ref{thm:size_trees} and bounding the sum with an integral, we obtain that for large $K$,
\begin{equation}\label{eq:gw_size_upper_bound}
\Ps{|t_1|>K}\leqslant C_2\sum_{k=K+1}k^{-3/2}
\leqslant C_2\int\limits_{K}^{\infty} x^{-3/2}dx= \frac{C_3}{\sqrt{K}},
\end{equation}
where $C_3>0$ is constant that depends on $\xi$. Hence, for any $\varepsilon_2>0$ and given $C_1>0$ there exists large $K$, such that 
\begin{equation}\label{eq:total_size_t_k}
\PB{\sum\limits_{i=1}^{\nu}|t_i|<C_1 K}> 1-\varepsilon_2.
\end{equation}
Since the total size of trees $t_i$, where $i\geqslant 1$, is bounded w.h.p., then their height is bounded too. By Proposition~\ref{pr:coupling_T_T*}, for large $N$ and any $\varepsilon_3>0$ we can find $\delta>0$ and a coupling of $\TT^N$ and $\TT^{\infty}$, such that the tree $\TT^N[\delta N]$ is same as $\TT^{\infty}[\delta N]$ with probability at least $(1-\varepsilon_3)$, hence by \eqref{eq:total_size_t_k} for any $\varepsilon_4>0$ and large $N$ there exists $K>0$ such that in the conditioned tree $\TT^{N}$:
\begin{equation}\label{eq:total_size_t_k_inN}
\PnB{\sum\limits_{i=1}^{\nu}|t_i|<C_1 K}> 1-\varepsilon_4.
\end{equation}

That is, the total size of trees $t_i$, where $1\leqslant i\leqslant \nu$, is absolutely bounded. On the other hand, as $N\to\infty$, the conditioning $Z_N>0$ ensures that the total size of $\TT^N$ grows to infinity, which then must be due to the tree $t_0$. Thus, by formula~\eqref{eq:kappa_tree}, where $\TT_i=t_i$ for $i\geqslant 0$, we have that $\kappa(\TT^N,s)$ is exactly equal to $\sum_{i=1}^{\nu}|t_i|$. Therefore,~\eqref{eq:total_size_t_k_inN} implies the tightness $\kappa(\TT^N,s)$, as claimed. 
\medskip

\noindent {\bf (2)} In order to prove the second statement, we make use of formula (\ref{eq:kappa_tree_edge}). We consider the fraction of the conditioned tree that w.h.p.~resembles the infinite tree up to the depth $\delta N$, where $\delta>0$, then show that the total size of the trees that hang off the first $\delta^2 N$ vertices on the spine constitutes a non-zero but small fraction of the total size of the conditioned tree w.h.p.  Thus, the probability that the extra edge is attached to any of these trees is small, and hence all these trees will hang off from the cycle created, ensuring that  $\kappa$ is positive. 

We will first consider the infinite tree $\TT^{\infty}$, then will translate our results using Proposition~\ref{pr:coupling_T_T*}. Denote as $\gamma_k$, where $k\geqslant 1$, the initial part of $\gamma$ between the root and the vertex at depth $k$ inclusive. Following the construction of $\TT^{\infty}$,  the number of unconditioned trees attached to $\gamma_k$ is $S_k=\sum_{i=1}^{k}\nu_i$, where $\nu_i=\hat{\xi_i}-1$, and $\hat{\xi_i}$ is the shifted size-biased version of the offspring distribution $\xi$. Label these unconditioned trees in the breadth-first order as $t_1,t_2,\dots,t_{\nu_1},\dots,t_{S_k}$. 

Let $n:=\delta N$ and $n':=\delta^2 N$. Consider the $S_{n'}$ unconditioned trees that hang off $\gamma_{n'}$, and let $t_0:=\TT^{\infty}\backslash \left(\gamma_{n'}\cup t_1\cup\dots\cup t_{S_{n'}}\right)$. We show that the $t_i$'s, where $1\leqslant i\leqslant S_{n'}$, do not go deeper than generation $n$  in $\TT^{\infty}$ w.h.p., or in other words, they have height at most $n-n'$. To start with, by Lemma \ref{thm:kolmogorov}, for any $\varepsilon'_1>0$, small $\delta>0$ and $i$, for all large enough $N$ we have 
$$\Ps{H(t_i)>n-n'}=\Ps{H(t_i)>(1-\delta)\delta N}<\frac{2}{\sigma^2(1-\delta)\delta N}(1+\varepsilon'_1).$$
We will now prove that the number of unconditioned trees $S_{n'}$ is linear in $n'$ w.h.p. Remember that $\mu=\Es{\nu}<\infty$. Hence, by the SLLN, for any $\varepsilon'_2>0$ and any $0<k'_1<1<K'_1<\infty$, if $n'$ is large enough, then 
\begin{equation}\label{eq:hangingSLLN}
\Ps{ k'_1 \mu n' < S_{n'} < K'_1\mu n'}>1-\varepsilon'_2.
\end{equation}
Hence, using the law of total probability and a union bound, we have 
\begin{align*}
\PB{\exists i\in \{1,\dots,\nu_{n'}\}:\,\,H(t_i)>n-n'} &< \PB{\exists i\in \{1,\dots,K'_1\mu n'\}:\,\,H(t_i)>n-n'}+ \varepsilon'_2\\
&<\frac{2K'_1\mu\delta }{\sigma^2(1-\delta)}(1+\varepsilon'_1) + \varepsilon'_2\,.
\end{align*}
Thus, for any $\varepsilon'_3>0$ there exists $\delta>0$ small enough, such that 
$$
\Pb{H(t_i)<n-n'\,\, \forall i\in \{1,\dots,\nu_{n'}\}}>1-\varepsilon'_3.
$$

We will now show that the total size of $S_{n'}$ unconditioned trees is w.h.p.~of order $n'^2$. Indeed, for any $0<k'_2<K'_2<\infty$, using~\eqref{eq:hangingSLLN} again,
\begin{equation}\label{eq:tree_size2}
\Ps{\sum\limits_{i=1}^{S_{n'}}|t_i|>K'_2 n'^2} < \Ps{\sum\limits_{i=1}^{K'_1\mu n'}|t_i|>K'_2 n'^2} +\varepsilon'_2,
\end{equation}
and similarly,
\begin{equation}\label{eq:tree_size3}
\Ps{\sum\limits_{i=1}^{S_{n'}}|t_i|<k'_2 n'^2} < \Ps{\sum\limits_{i=1}^{k'_1\mu n'}|t_i|<k'_2 n'^2} + \varepsilon'_2.
\end{equation}
Lemma~\ref{thm:size_trees} implies that $\sum |t_i|$ belongs to the domain of attraction of L\'evy's stable law with exponent $1/2$; see \cite[Theorem 3.7.2]{Durrett2010}. Therefore, $\sum_{i=1}^{k'n'} |t_i| / (k'n')^2$ has a non-degenerate limit law that is independent of $k'$. It follows that if $K_2'$ is chosen large enough in~(\ref{eq:tree_size2}), and $k_2'$ is chosen small enough in~(\ref{eq:tree_size3}), then the first terms on the right hand sides become small. Altogether,
\begin{equation}\label{eq:GW_eps4}
\PB{\frac{1}{n'^2}\sum\limits_{i=1}^{S_{n'}} |t_i|\in [k'_2,K'_2]} > 1-\varepsilon'_{4}.
\end{equation}

Now, by Proposition~\ref{pr:coupling_T_T*}, for large $N$ and any $\varepsilon'>0$ we can find $\delta>0$ and a coupling of $\TT^N$ and $\TT^{\infty}$, such that the tree $\TT^N[n]$ is same to $\TT^{\infty}[n]$ with probability at least $1-\varepsilon'$. There exists an image of $\gamma_{n}$ in $\TT^N$, and all trees that hang off $\gamma_{n'}$ do not go deeper than generation $Z_n$ w.h.p. This and~\eqref{eq:GW_eps4} imply that for any $\varepsilon'_5>0$ and large $N$ there exist $k'_2,K'_2>0$ such that in the conditioned tree $\TT^{N}$:
\begin{equation}\label{eq:GW_eps5}
\PnB{\frac{1}{n'^2}\sum\limits_{i=1}^{S_{n'}} |t_i|\in [k'_2,K'_2]} > 1-\varepsilon'_{5}.
\end{equation}

Now we are ready to prove that the extra edge is attached to the subtree $t_0$ in $\TT^N$ w.h.p. The probability of this event can be bounded as follows, for any $\delta_0>0$:
\begin{equation}\label{eq:GW_P_attach_edge}
\begin{split}
\Ps{e\in t_0} &>  \PB{e\in t_0\md \frac{|\TT^N_{+e}\backslash t_0|}{|\TT^N_{+e}|}<\delta_0} \, \PB{\frac{|\TT^N_{+e}\backslash t_0|}{|\TT^N_{+e}|}<\delta_0}\\
&> (1-\delta_0) \, \PB{\frac{|\TT^N_{+e}\backslash t_0|}{|\TT^N_{+e}|}<\delta_0}\,,
\end{split}
\end{equation}
where we used that the attachment is uniform.

Now notice that~\eqref{eq:GW_eps5} implies, in particular, that for any $\varepsilon_5>0$ there is a small enough $\beta>0$ such that 
\begin{equation}\label{eq:smallT}
\Pns{|\TT^N|>\beta N^2}> 1-\varepsilon_5.
\end{equation}
Using \eqref{eq:smallT} and \eqref{eq:GW_eps4}, we have
$$
\PB{\frac{|\TT^N_{+e}\backslash t_0|}{|\TT^N_{+e}|}<\frac{K'_2\delta^4}{\beta}}>1-\varepsilon_5-\varepsilon'_{5}.
$$
Since $\delta$ can be chosen to be small enough, combining this with~\eqref{eq:GW_P_attach_edge} we get that for any $\varepsilon'_{6}>0$ there exists $\delta'>0$  such that for all large enough $N$, 
\begin{equation}\label{eq:attach}
\Ps{e\in t_0}>1-\varepsilon'_{6}.
\end{equation}

Now since the statement of the theorem about the fraction of $\kappa(\TT^N_{+e},s)$ and $|\TT^N_{+e}|$, the large volume of the tree can decrease this fraction, thus we need to prove that the probability to have a very large tree $\TT^N$ on the $N^2$ scale is also small. Let us first write Bayes' formula:
\begin{equation*}
\Pns{|\TT^N|=M}=\Ps{|\TT|=M\md Z_{N}>0} = \frac{\Ps{Z_{N}>0\md |\TT|=M}}{\Ps{Z_N>0}}\Ps{|\TT|=M}\,.
\end{equation*}
Since the condition $Z_N>0$ implies that $|\TT^N| > N$, we may assume that $M > N$. Then, by Lemmas~\ref{thm:size_trees},~\ref{thm:kolmogorov} and~\ref{thm:width}, it follows that
\begin{equation}\label{eq:GW_size_T_bound}
\Pns{|\TT^N|=M} < C_1 e^{-c_1\frac{N^2}{M}}NM^{-3/2}.
\end{equation}
Taking a large $B>0$, we thus have:
\begin{equation*}
\begin{split}
\Ps{|\TT^N_{+e}|> B N^2} &< C_1\sum\limits_{m>B N^2}e^{-c_1\frac{N^2}{m}}Nm^{-3/2} \\
&< C_1\int\limits_{B N^2}^{\infty}e^{-c_1\frac{N^2}{m}}Nm^{-3/2}dm =C_1\int\limits_{B}^{\infty}e^{-\frac{c_1}{x}}x^{-3/2}dx < C_2 \, B^{-1/2}.
\end{split}
\end{equation*}
Hence, for any $\varepsilon_6>0$ we can find $B>0$ such that
\begin{equation}\label{eq:largeT}
\Ps{|\TT^N_{+e}|<B N^2}> 1-\varepsilon_{6}.
\end{equation}

Now use formula~\eqref{eq:kappa_tree_edge} for $\kappa(\TT^N_{+e},s)$. If the other endpoint of the extra edge $e$ is in $t_0$, then  $\kappa(\TT^N_{+e},s)> \min\big\{|t_0|,\sum_{i=1}^{S_n'}|t_i|\big\}$, and if $\delta$ is chosen small enough so that $\beta > 2K'_2\delta^4$, and the events of~\eqref{eq:smallT} and~\eqref{eq:GW_eps4} are satisfied,  then this minimum is $\sum_{i=1}^{S_n'}|t_i|$. That is, combining everything, we have 
$$
\PB{\frac{\kappa(\TT^N_{+e},s)}{|\TT^N_{+e}|}>\frac{k'_2\delta^4}{B}}>1-\varepsilon_{5}-\varepsilon'_{5}-\varepsilon_{6}-\varepsilon'_{6}.
$$
This finishes proof of the second part of Theorem~\ref{thm:GW_smoothing}.
\end{proof}

\section{Uniform spanning tree of the complete graph}\label{s.UST}

The uniform spanning tree $\UST(n)$ is a uniform random element of the set of spanning trees of the complete graph $K_n$ on $n$ vertices (of which there are $n^{n-2}$, as Cayley's formula tells us). Marking a vertex $s$ of $K_n$ as the root, we have a uniform rooted spanning tree. This random tree model can be considered as a variation of the first one: instead of producing a ``large critical tree'' by conditioning a critical Galton-Watson tree to have depth larger than a given value, one could condition on having a given volume. In particular, a critical Poisson Galton-Watson tree, conditioned to have volume $n$, turns out to be exactly the rooted $\UST(n)$; see, e.g., \cite{Janson2012} for this equivalence. 

In this section we derive the analogous result about speeding up of the SI spreading with power law weights on the rooted $\UST(n)$ with an extra edge added between the root and a random vertex in the tree. This model could probably be studied using variations of the methods of the previous section, but we use a rather different approach, based on Loop-Erased Random Walks (LERW) and time-inhomogeneous P\'olya urns. The use of LERW is called for by Wilson's algorithm to construct $\UST(n)$ (see \cite{Wilson1996} or \cite[Section 4.1]{LPbook}). The rooted $\UST(n,s)$ with an extra edge is obtained using a slight variation of Wilson's algorithm: we first add the extra edge $e=(x_0,x_1)$, then the first LERW path is run from $x_0$ to $x_1$. This procedure does not spoil the uniformity of measure because of the symmetries of Wilson's algorithm.

\medskip
\noindent \textbf{Colored Wilson's algorithm} for the construction of $\UST(n)_{+e}$: 
\begin{enumerate}

\item Enumerate the vertices as $\{x_0,x_1,\dots,x_{n-1}\}$, where $x_0=s$, and add the edge $(x_0,x_1)$. 

\item Run a LERW from $x_1$ until hitting $x_0$; the union of this walk with the existing edge will be denoted by $C_{-1}$.

\item Run a LERW from $x_2$ until hitting $C_{-1}$. This walk, including its endpoint, will be denoted by $R_0$, while $C_{-1}$ minus the endpoint of $R_0$ will be $B_0$. We also set $C_0=R_0\cup B_0$.

\item Now, recursively, given $C_i=R_i\cup B_i$ for some $i\geqslant 0$, run a LERW from $x_{i+3}$ until hitting $C_i$, denoting the walk, excluding its endpoint, by $\Delta_{i+1}$. If the endpoint is in $R_i$, then we let $R_{i+1}=R_i\cup \Delta_{i+1}$ and $B_{i+1}=B_i$; otherwise, we let $B_{i+1}=B_i\cup\Delta_{i+1}$ and $R_{i+1}=R_i$. 

\item Iterate this procedure until reaching $C_I = \{x_0,x_1,\dots,x_{n-1}\}$. In retrospect, set also $\Delta_{-1}=C_{-1}\setminus\{x_0\}$ and $\Delta_0 = C_0\setminus C_{-1}$.
\end{enumerate}

\begin{figure}[ht]
\centering
\begin{minipage}[b]{0.4\textwidth}
\centering
\includegraphics[scale=0.6]{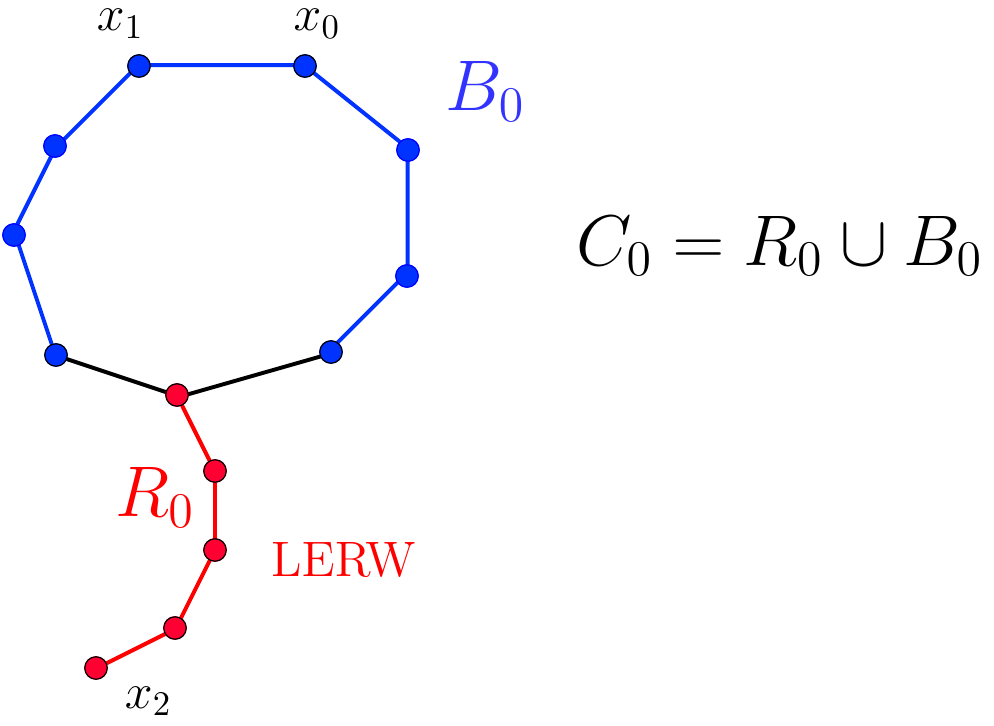}
\caption*{(a)}
\end{minipage}\vrule
\begin{minipage}[b]{0.59\textwidth}
\centering
\includegraphics[scale=0.6]{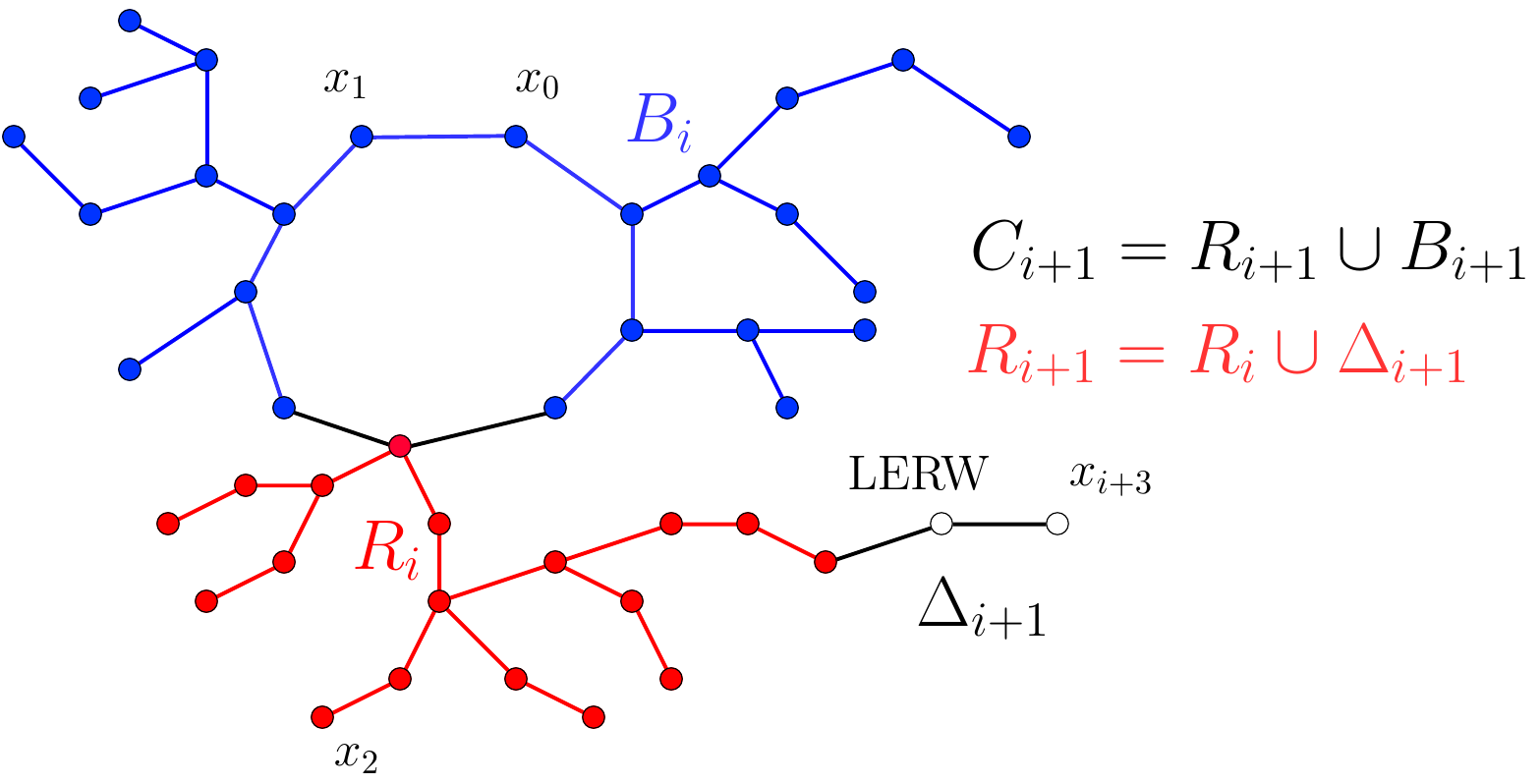}
\caption*{(b)}
\end{minipage}
\caption{Illustration of the colored Wilson's algorithm for constructing the rooted $\UST(n,s)$. (a) Step 3, with the emergence of two colored classes. (b) Step 4, the LERW from $x_{i+3}$ hitting $R_i$.}
\label{fig:Wilson_illustration}
\end{figure}

We will refer to $C_i$ as the set of colored vertices, red in $R_i$, blue in $B_i$. We are keeping track of the colors in order to understand equation \eqref{eq:kappa_tree_edge} for $\kappa(\UST(n),s)$ --- if the largest subtree is red, then $\kappa(\UST(n),s)=|B_I|$, otherwise we have $\kappa(\UST(n),s) \geqslant |R_I|$. Thus, our goal can be formulated as follows:

\begin{theorem}\label{PolyaEnd}
In the above framework of constructing a rooted $\UST(n)$, the final partition $\big(R_I,B_I\big)$ of the set $\{x_0,x_1,\dots,x_{n-1}\}$, as $n\to\infty$, is asymptotically almost surely non-trivial: for every $\varepsilon>0$ there exists $\delta>0$ such that 
$$
\PB{\frac{\kappa(\UST(n),s)}{n} > \delta} \geqslant \PB{|R_I| / |C_I| \in (\delta,1-\delta)} > 1-\varepsilon,
$$
for all $n>n_{\varepsilon}$.
\end{theorem}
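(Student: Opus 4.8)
The plan is to recode the coloured Wilson's algorithm as a P\'olya urn with time-dependent increments and to show that this urn ends up balanced. The feature of $K_n$ that makes the urn structure appear: when a LERW is run from a vertex outside $C_i$ until it hits $C_i$, the hitting vertex is \emph{uniform} on $C_i$ (harmonic measure on $K_n$ seen from outside a set is uniform on the set, and loop-erasure does not move the hitting point), and, conditionally on $\F_i$, it is \emph{independent} of the loop-erased path strictly before the hit, hence of the number $\ell_{i+1}:=|\Delta_{i+1}|$ of newly added vertices. Thus $(R_i,B_i)$ evolves exactly as a two-colour P\'olya urn in which at step $i$ a ``ball of size $\ell_{i+1}$'' --- a quantity that, given the past, is independent of the colour drawn --- is adjoined to red with probability $R_i/C_i$ and to blue with probability $B_i/C_i$, the urn being run until $C_I=n$; and since $R_0,B_0\ge 1$, neither colour dies. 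A one-line computation shows that $M_i:=R_i/C_i$ (and $1-M_i=B_i/C_i$) is a martingale bounded in $[0,1]$, so it converges and $\E[R_I/n]=\E[M_I]=\E[M_0]$.

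The heart of the proof is to show that $M_I=R_I/n$ is non-degenerate: for every $\eps>0$ there is $\delta>0$ with $\Pr[\delta<M_I<1-\delta]>1-\eps$ for all large $n$. First one pins down the initial data using standard facts about SRW and LERW on $K_n$ (the hitting time of a set of size $m$ from outside is geometric with mean $\asymp n/m$, $|C_{-1}|\asymp\sqrt n$): both $R_0=|\Delta_0|$ and $B_0=|C_{-1}|-1$ are of order $\sqrt n$, and in fact, since the walk from $x_2$ runs $\asymp\sqrt n$ steps before hitting $C_{-1}$ and loop-erasing a walk of that length on $n$ vertices keeps a positive fraction of it, $|\Delta_0|\asymp|C_{-1}|$ with high probability, so for every $\eps>0$ there is $\delta>0$ with $\Pr[\delta<M_0<1-\delta]>1-\eps$ for all large $n$. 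Next, split the run at the first index $i_A$ with $C_{i_A}\ge A\sqrt n$, $A$ a large constant: on $\{C_i\ge A\sqrt n\}$ one has $\ell_{i+1}\asymp n/C_i$ and therefore $\sum_{i\ge i_A}\E[(M_{i+1}-M_i)^2\mid\F_i]\le\tfrac14\sum_{i\ge i_A}(\ell_{i+1}/C_i)^2=O(A^{-2})$, whence $|M_I-M_{i_A}|=O(A^{-1/2})$ with probability $1-O(A^{-1})$. So it suffices to control $M_{i_A}$, i.e.\ the urn only up to total size $A\sqrt n$; dividing all pile sizes by $\sqrt n$, this is a P\'olya urn started from masses of order $1$ and run, through $O(A^2)$ order-$1$ chunks, until the total reaches $A$. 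As $n\to\infty$ this finite urn converges in distribution (its chunk sizes converge, independently of the colours), and as $A\to\infty$ the resulting law converges to the limiting ``continuous'' P\'olya distribution on $(0,1)$; the remaining point is that this limit is non-atomic at $0$ and $1$, which one gets by a Pemantle-type argument \cite{Pemantle1990} adapted to our regime --- here $\sum_i 1/C_i<\infty$, so the standard hypothesis fails and one instead estimates directly the probability that one colour is drawn only finitely often after size $A\sqrt n$, using $R_0,B_0\ge 1$. Combined with the balance of $M_0$, this gives $\Pr[\delta<M_I<1-\delta]>1-\eps$.

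Translating back is immediate from~\eqref{eq:kappa_tree_edge} and the colour bookkeeping set up before the theorem: $R_I$ and $B_I$ partition the $n$ vertices, and $R_I$ is exactly one hanging tree of the cycle $C_{-1}$ (the one containing $x_2$, as every later red branch attaches within it); hence if that tree is the largest hanging tree then $\kappa(\UST(n),s)=|B_I|=n-|R_I|$, and otherwise $\kappa(\UST(n),s)\ge|R_I|$, so in either case $\kappa(\UST(n),s)\ge\min(|R_I|,\,n-|R_I|)$ and $\Pr[\kappa(\UST(n),s)/n>\delta]\ge\Pr[|R_I|/n\in(\delta,1-\delta)]>1-\eps$. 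The main obstacle is the second step: getting control of LERW lengths on $K_n$ that is quantitative enough to fix the initial condition $M_0$ and the increment orders, and then establishing the absence of atoms at $0$ and $1$ for the limiting urn proportion in the regime $\sum_i 1/C_i<\infty$, where the standard time-dependent P\'olya urn theorem does not directly apply.
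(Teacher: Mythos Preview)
Your P\'olya-urn framing and the martingale $M_i=|R_i|/|C_i|$ are exactly right, and the paper uses the same urn structure. The late-phase quadratic variation bound is fine in spirit (modulo the sloppiness that $(M_{i+1}-M_i)^2\le(\ell_{i+1}/C_i)^2$ involves the random $\ell_{i+1}$, so one really needs $\E[\ell_{i+1}^2\mid\F_i]/C_i^2$; this is easily supplied from the LERW length law). The genuine gap is your early-phase step: you reduce everything to showing that the limiting proportion at scale $A\sqrt n$ has no atoms at $0$ or $1$, then write ``a Pemantle-type argument adapted to our regime\dots one instead estimates directly the probability that one colour is drawn only finitely often after size $A\sqrt n$, using $R_0,B_0\ge 1$''. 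This does not close the argument. The urn runs for only finitely many steps, so ``drawn finitely often'' is not the right event; and $R_0,B_0\ge 1$ alone cannot prevent an atom --- what matters is that the normalized increments are not too large relative to the current totals. You flag this yourself as ``the main obstacle'', and it remains open in your write-up.

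The paper resolves exactly this point, and in a way that makes the early/late split unnecessary. It proves that \emph{all} increments are uniformly bounded after normalization: for every $\eps>0$ there is $M<\infty$ with $\P[\,\Delta_i/\sqrt n<M\text{ for all }i\le I\,]>1-\eps$. The key technical ingredient is a growth estimate $\P[\,|C_i|^2<ani\,]<\exp(-Ai)$ with $A\to\infty$ as $a\to 0$, obtained from the explicit LERW hitting law $\P[|\Delta_{i+1}|=k\mid|C_i|]=\frac{k+|C_i|}{n}\prod_{j=1}^{k-1}(1-\tfrac{j+|C_i|}{n})$, which yields a Gaussian-type tail $\P[|\Delta_{i+1}|>t\sqrt n\mid|C_i|]\asymp\exp(-t^2/2-t|C_i|/\sqrt n)$. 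Summing these tails over $i$ gives the uniform bound. Once all increments of the scaled urn $(|R_i|,|B_i|)/\sqrt n$ are bounded, Pemantle's theorem applies directly (via the stopping-time corollary: continue the urn past $I$ with bounded increments, so an atom at $0$ or $1$ at time $I$ would survive to the limit, contradicting Pemantle). This single boundedness statement replaces both your early-phase ``Pemantle-type'' argument and your late-phase quadratic-variation bound; the latter is correct but redundant once the former is available in this stronger form.
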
 

Note that $\Delta_{i+1}$ will be added to $R_i$ or $B_i$ with probability $|R_i|/\big(|R_i|+|B_i|\big)$ and $|B_i|/\big(|R_i|+|B_i|\big)$, respectively; moreover, the entire set $\Delta_{i+1}$ is independent of whether we add it to $R_i$ or $B_i$. Therefore, we can think of the process $\big(|R_i|,|B_i|\big)_{i\geqslant 0}$ as a P\'olya urn with random, symmetric, time-inhomogeneous increments: in each round $i$, we add $|\Delta_i|$ balls either to the red or to the blue balls, with probabilities proportional to the current color counts. In such a P\'olya urn process, when $(|\Delta_i|)_{i\geqslant 1}$ is uniformly bounded, the following result holds \cite{Pemantle1990}.

\begin{theorem}\label{thm.Pemantle}
Let the P\'{o}lya urn $\left(R_i,B_i\right)_{i\geqslant 0}$ has increments $\Delta_i$ bounded by some constant $M$ for all $i\geqslant 1$. Let $C_i=R_i+B_i$. Then the limiting distribution of $\lim\limits_{i\to\infty}\left(R_i/C_i, B_i/C_i\right)$ has no atoms in either coordinate.
\end{theorem}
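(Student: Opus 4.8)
I would use the martingale structure of the urn to reduce ``no atoms'' to a quantitative anti-concentration bound for the limiting proportion, which I would then attack separately in the interior of $[0,1]$ and at its endpoints.

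\emph{Reduction to anti-concentration.} Put $\rho_i := R_i/C_i$. Conditioning on $\F_i$ together with the (adapted) increment $\Delta_{i+1}$, the new balls join the red pile with probability $\rho_i$, so $\Es{R_{i+1}\mid\F_i,\Delta_{i+1}}=R_i+\Delta_{i+1}\rho_i$ and, since $C_{i+1}=C_i+\Delta_{i+1}$ is then known, $\Es{\rho_{i+1}\mid\F_i}=\rho_i$. Hence $\rho_i$ is a bounded martingale, $\rho_i\to\rho_\infty$ a.s.\ (and $B_i/C_i\to1-\rho_\infty$), so it suffices to bound $\Ps{\rho_\infty\in[x,x+h]}$ by a quantity $\varphi(h)$ with $\varphi(h)\downarrow0$ as $h\downarrow0$, uniformly in $x\in[0,1]$. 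One may moreover condition on the whole sequence $(\Delta_i)_{i\ge1}$ and work with a fixed weight sequence $\Delta_i\in\{1,\dots,M\}$; because the weights are bounded, $C_0+i\le C_i\le C_0+Mi$, the martingale increments are $\rho_{i+1}-\rho_i=\gamma_{i+1}(\xi_{i+1}-\rho_i)$ with $\gamma_{i+1}:=\Delta_{i+1}/C_{i+1}\asymp1/i$ and $\Ps{\xi_{i+1}=1\mid\F_i}=\rho_i$, $\xi_{i+1}\in\{0,1\}$, and the predictable quadratic variation $\langle\rho\rangle_i=\sum_{k<i}\gamma_{k+1}^2\rho_k(1-\rho_k)$ converges a.s.\ while $\sum_k\gamma_k=\infty$.

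\emph{Interior points.} For $x\in(0,1)$ the mechanism is diffusive: the increments $\gamma_k$ tend to $0$ with $\sum\gamma_k^2<\infty$ but $\sum\gamma_k=\infty$, so $\rho$ behaves like a time-changed Brownian motion run for a finite but positive amount of time. Concretely, fix $n$ and work on $\{\rho_k\in(\eta,1-\eta)\ \forall k\ge n\}$; a martingale central limit theorem (Lindeberg holds) identifies the conditional law of $\rho_\infty-\rho_n$, suitably rescaled, as Gaussian, with conditional variance $\ge\eta^2\sum_{k\ge n}\Es{\gamma_{k+1}^2}>0$. To turn this into a genuine atomless estimate rather than a distributional limit, I would iterate a dyadic splitting: across the geometric scales $[2^jn,2^{j+1}n)$ the increments of $\rho$ are, up to small corrections, independent and each spread over a window of width $\asymp 2^{-j/2}n^{-1/2}$, so summing the scales smears any prospective atom of the conditional law of $\rho_\infty$ given $\F_n$; averaging over $n$ then gives $\Ps{\rho_\infty=x}=0$ and, with a little more bookkeeping, $\Ps{\rho_\infty\in[x,x+h]}\le C_\eta h$ on these events.

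\emph{Boundary points --- the main obstacle.} The hard case is $x\in\{0,1\}$, say $x=0$ ($x=1$ is symmetric): there $\rho_k(1-\rho_k)\to0$, the diffusive spreading above degenerates, and one must rule out $\rho_i\to0$ directly, with a rate. Here the lower bound $\Delta_i\ge1$ is essential: at every step the red (minority) pile receives at least one ball with probability $\rho_i$, so by a conditional Borel--Cantelli argument $R_i\to\infty$, and the task is to upgrade this to the quantitative estimate $\Ps{\rho_\infty<\delta}\le C\delta^{c}$ for constants $C,c>0$ depending only on $M$ and the initial composition --- which excludes an atom at $0$ and, in the application to Theorem~\ref{PolyaEnd}, supplies the uniform continuity of the limiting distribution function near $0$ that is actually needed. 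Proving this polynomial decay is where essentially all the work lies: one must control the log-ratio $\log\rho_i=\log R_i-\log C_i$, whose (nearly) martingale increments are not summable, balancing ``$R_i$ is not too small'' against ``$\rho_i$ does not decay too fast,'' and it is precisely the boundedness of the increments (hence $C_i\asymp i$, $\gamma_i\asymp1/i$) that makes such a balance possible; this is the heart of Pemantle's argument in \cite{Pemantle1990} and where I would concentrate my effort.
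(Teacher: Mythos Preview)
The paper does not give a proof of this theorem: it is quoted verbatim from \cite{Pemantle1990}, so there is no ``paper's own proof'' to compare your proposal against. Your task was therefore either to reproduce Pemantle's argument or to supply an independent one, and what you have written is an outline rather than a proof.

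The martingale setup is correct, and the reduction to showing that the law of $\rho_\infty$ has no atoms is the right framing. But both of your two cases have genuine gaps. For the interior, the assertion that the increments of $\rho$ across dyadic blocks $[2^jn,2^{j+1}n)$ are ``up to small corrections, independent'' is doing all the work and is not justified: each increment depends on the current value $\rho_k$, and controlling that dependence uniformly is exactly the difficulty. A CLT for the martingale tail does not by itself rule out atoms, as you note yourself, and the dyadic smearing heuristic needs a real anti-concentration input (for instance, a uniform lower bound on the conditional variance of $\rho_\infty-\rho_n$ given $\F_n$ on the event $\rho_n\in(\eta,1-\eta)$, together with an argument that a martingale limit with uniformly positive conditional variance at every finite time cannot have atoms). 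For the boundary, you explicitly defer to \cite{Pemantle1990}: writing that the polynomial bound $\Ps{\rho_\infty<\delta}\le C\delta^c$ ``is the heart of Pemantle's argument\dots and where I would concentrate my effort'' is an honest acknowledgment that you have not proved it. Since this is precisely the case that cannot be handled by diffusive heuristics (the quadratic variation vanishes there), the proposal as it stands does not establish the theorem.

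If you want to complete the argument rather than cite it, one workable route is to first prove $\rho_\infty\in(0,1)$ a.s.\ directly---e.g., by bounding $\Es{\rho_\infty^{-\theta}}$ or $\Es{\log\rho_\infty}$ using $\Delta_i\le M$ and $R_i\ge R_0\ge1$---and only then run the interior anti-concentration, now knowing that $\rho_n$ stays bounded away from $0$ and $1$ eventually. That ordering avoids the circularity of needing the boundary case inside the interior argument.
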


\begin{corollary}\label{c.PemStop}
In the setting of the previous theorem, for any stopping time $I$ for the process $\left(R_i,B_i\right)_{i\geqslant 0}$, the distribution of $\left(R_I/C_I, B_I/C_I\right)$ has no atoms at 0 or 1. 
\end{corollary}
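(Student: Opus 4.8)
The plan is to deduce Corollary~\ref{c.PemStop} directly from Theorem~\ref{thm.Pemantle}. Since $C_i=R_i+B_i$, one has $B_i/C_i=1-R_i/C_i$, so it suffices to show $\P[R_I/C_I=0]=0$ and $\P[R_I/C_I=1]=0$ (the two cases being analogous, with red and blue interchanged). For this I would split on whether the stopping time $I$ is finite: on $\{I<\infty\}$ the fraction $R_I/C_I$ stays strictly inside $(0,1)$ for an entirely elementary reason, while on $\{I=\infty\}$ it equals the almost-sure limit of the urn fraction, whose law has no atoms by Theorem~\ref{thm.Pemantle}.

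Concretely, I would first record the martingale structure of the urn. Conditioning on $\F_i$ together with the size $|\Delta_{i+1}|$ of the next batch, the batch is appended to the red pile with probability $R_i/C_i$ and to the blue pile with probability $B_i/C_i$, while $C_{i+1}=C_i+|\Delta_{i+1}|$ is already determined; a one-line computation then gives $\E\big[R_{i+1}/C_{i+1}\,\big|\,\F_i\big]=R_i/C_i$. Hence $(R_i/C_i)_{i\ge0}$ is a bounded $(\F_i)$-martingale and converges almost surely to a limit $W:=\lim_i R_i/C_i$; on the event $\{I=\infty\}$ we read $R_I/C_I$ as $W$. I would also record two trivialities: the piles are non-decreasing, so $R_i\ge R_0$ and $B_i\ge B_0$ for every $i$, and the urn is non-degenerate, $R_0\ge1$ and $B_0\ge1$ (in the colored Wilson construction this holds because $R_0$, $\Delta_{-1}=C_{-1}\setminus\{x_0\}$ and $\Delta_0=C_0\setminus C_{-1}$ are non-empty LERW segments). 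Then the argument splits. On $\{I<\infty\}$ we have $C_I<\infty$, so $R_I/C_I=0$ would force $R_I=0$, contradicting $R_I\ge R_0\ge1$; hence $\P\big[R_I/C_I=0,\ I<\infty\big]=0$, and likewise $\P\big[R_I/C_I=1,\ I<\infty\big]=\P\big[B_I/C_I=0,\ I<\infty\big]=0$ since $B_I\ge B_0\ge1$. On $\{I=\infty\}$, $R_I/C_I=W$, so $\P\big[R_I/C_I\in\{0,1\},\ I=\infty\big]\le\P\big[W\in\{0,1\}\big]=0$ by Theorem~\ref{thm.Pemantle} applied to each coordinate of the limiting distribution. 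Adding the two cases gives $\P[R_I/C_I=0]=\P[R_I/C_I=1]=0$, which is the corollary; it then feeds, via Theorem~\ref{PolyaEnd}, into the lower bound on $\kappa(\UST(n),s)/n$.

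The point I expect to require the most care — there is no real computation — is the observation that a general stopping time genuinely can create atoms of $(R_I/C_I,B_I/C_I)$ inside the open interval $(0,1)$: for instance $I=\inf\{i:C_i\ge k_0\}$ makes $R_I$ take only finitely many values, each with positive probability. So one cannot hope to transport the atomlessness of the limit through a single optional-stopping assertion; the endpoints $0$ and $1$ are special, and they must be shielded by two different mechanisms — monotonicity together with non-degeneracy of the initial urn when $I<\infty$, and Pemantle's theorem for the limiting fraction when $I=\infty$. Once this dichotomy is recognized, the conditional-expectation identity and the non-degeneracy check are routine.
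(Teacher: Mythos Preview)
Your proof is correct and somewhat different in spirit from the paper's. The paper does not split on whether $I$ is finite; instead it invokes the strong Markov property to continue the urn from time $I$ and argues in one stroke that an atom of $R_I/C_I$ at $0$ or $1$ would survive in the limit law of $R_i/C_i$, contradicting Theorem~\ref{thm.Pemantle}. (Unpacked: $R_i/C_i$ is a bounded martingale with limit $W$, so $R_I/C_I=\E[W\mid\F_I]$ for any stopping time, and since $W\in[0,1]$, having $\E[W\mid\F_I]\in\{0,1\}$ forces $W\in\{0,1\}$ on that event.) Your route replaces this optional-stopping/absorbing-state observation by the more elementary fact that $R_I\ge R_0\ge 1$ and $B_I\ge B_0\ge 1$ on $\{I<\infty\}$, handling $\{I=\infty\}$ directly via the limit. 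Your argument is a bit longer but makes the role of the non-degenerate starting configuration explicit; the paper's is terser and treats finite and infinite $I$ uniformly. Both lead to the same conclusion and feed into Theorem~\ref{PolyaEnd} in the same way.
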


\begin{proof}[Proof of the corollary] By the strong Markov property of $\big(|R_i|,|B_i|\big)_{i\geq 0}$, we can continue the urn process from $\big(R_I,B_I\big)$, and if $|R_I| / |C_I|$ had an atom at $0$ or $1$, then it would survive in the limit law of $|R_i| / |C_i|$. 
\end{proof}

Of course, our increments $\Delta_i$ are not at all bounded; however, it will turn out that the sequence $\big(|\Delta_i|/\sqrt{n}\big)_{i\geq 1}^I$ is asymptotically almost surely bounded, i.e., for every $\eps>0$ there exists $M<\infty$ such that 
\begin{equation}\label{e.sqrtbdd}
\P\big[\Delta_i/\sqrt{n} < M \textrm{ for all } i=1,2,\dots,I\big]>1-\eps,
\end{equation}
for all $n>n_{\eps}$ large enough. Hence, with probability $1-\eps$, we can consider $\big(|R_i|,|B_i|\big)/\sqrt{n}$ as a symmetric time-inhomogeneous P\'olya urn process with bounded increments. Since the normalizations by $\sqrt{n}$ cancel each other out, Corollary~\ref{c.PemStop} implies that $|R_I| / |C_I|$ cannot have an atom at $0$ or $1$ of a uniformly positive size, as $n\to\infty$. Finally, we can take the mixture of these laws $|R_I| / |C_I|$ over the law of the initial configuration $\big(|R_0|,|B_0|\big)/\sqrt{n}$ and over the sequence $\big(\Delta_i/\sqrt{n}\big)_{i\geq 1}^I$, to obtain the final result.

\begin{proof}[Proof of \eqref{e.sqrtbdd}] First, we need to prove that the initial configuration $\big(|R_0|,|B_0|\big)/\sqrt{n}$ is non-trivial. It is a well-known result that the length of the first LERW path $C_{-1}$ has Rayleigh distribution in the limit \cite{camarri2000limit}. We present here the derivation taken from \cite{PeresRevelle2004}, since we will use a generalization of this argument. Pick two vertices $x$ and $y$ in the complete graph $K_n$, let $L=d_{\UST(n)}(x,y)$ be the length of the LERW path from $x$ to $y$, and denote by $\gamma_k$ the first $k$ vertices of the path itself. Because of the symmetries of $K_n$, we can write the following, with any specific sequence $x=x_0, x_1,\dots,x_{k-1}$ of distinct vertices that does not contain $y$: 
\begin{equation}\label{e.PF1}
\begin{split}
\Pb{L=k \md L\geqslant k} &= \Pb{L=k \md \gamma_k=\{x_0,\dots,x_{k-1}\}}\\
&=\PB{\Step(x_{k-1},y) \md \Past(x_0,\dots,x_{k-1}),\,\Future(x_0,\dots,x_{k-1})}\,;
\end{split}
\end{equation}
here, $\Past(x_0,\dots,x_{k-1})$ is the event that the SRW from $x$ to $y$ is currently at $x_{k-1}$ and its loop-erasure so far is $\{x_0,\dots,x_{k-1}\}$; the event $\Future(x_0,\dots,x_{k-1})$ is that the SRW continued from $x_{k-1}$ will not return to $\{x_0,\dots,x_{k-1}\}$ before hitting $y$; the event $\Step(x_{k-1},y)$ is that the first step of the SRW continued from $x_{k-1}$ will be to $y$; and finally, the equality between the two lines is by considering the last visit of the SRW to $x_{k-1}$. We can then continue~\eqref{e.PF1} as follows:
\begin{equation}\label{e.PF2}
\begin{split}
\frac{\Pb{\Step(x_{k-1},y) \md \Past(x_0,\dots,x_{k-1})}}{\Pb{\Future(x_0,\dots,x_{k-1}) \md \Past(x_0,\dots,x_{k-1}) } }= \frac{\frac{1}{n-1}}{\frac{1}{n-1}+\frac{n-k-1}{n-1}\frac{1}{k+1}}=\frac{k+1}{n}\,,
\end{split}
\end{equation}
where the second term in the denominator corresponds to first stepping to some vertex not in $\{x_0,\dots,x_{k-1},y\}$, then hitting $y$ before $\{x_0,\dots,x_{k-1}\}$. Now we can write a telescopic product:
\begin{equation}\label{e.prod}
\begin{split}
\Pb{L=k} &= \Pb{L=k \md L \geqslant k} \cdot \Pb{L > k-1 \md L\geqslant k-1} \cdots \Pb{L> 1 \md L\geqslant 1}\\
&= \frac{k+1}{n}\prod\limits_{j=1}^{k-1}\left(1-\frac{j+1}{n}\right).
\end{split}
\end{equation}

In order to understand the size of~\eqref{e.prod}, note that for all $0<x<1/2$ we have $\exp(-x-x^2)< 1-x < \exp(-x)$, and thus, for any $t>0$ and $n$ large enough,
$$
\exp\left(-\frac{\lfloor t\sqrt{n}\rfloor^2}{2n}-\frac{\lfloor t\sqrt{n}\rfloor^3}{3n^2}\right) < \prod_{j=1}^{\lfloor t\sqrt{n}\rfloor} \left(1-\frac{j+1}{n}\right) < \exp\left(-\frac{\lfloor t\sqrt{n}\rfloor^2}{2n}\right)\,.
$$
Therefore,~\eqref{e.prod} gives, as $n\to\infty$,
$$
\Pb{L=\lfloor t\sqrt{n}\rfloor} \sim \exp\left(-\frac{t^2}{2}\right)\frac{t}{\sqrt{n}}\,,
$$
which proves the Rayleigh limit law for $L/\sqrt{n}$.

In the further steps of the algorithm, the LERW hits the set $C_i$ instead of a single vertex, thus using similar arguments we can obtain the conditional distribution of the size of $\Delta_i$, for each $i\geqslant -1$:
\begin{equation}\label{DeltaRecur}
\Pb{|\Delta_{i+1}|=k \md |C_i|}= \frac{k+|C_i|}{n} \prod_{j=1}^{k-1} \left( 1-\frac{j+|C_i|}{n} \right).
\end{equation}
In particular, using the same calculus estimate as before, 
$$
\PB{R_0=\lfloor r\sqrt{n}\rfloor \md B_0=\lfloor b\sqrt{n}\rfloor } \sim \frac{r+b}{\sqrt{n}}  \exp\left(-\frac{(r+b)^2-b^2}{2}\right).
$$
That is, $(R_0,B_0)/\sqrt{n}$ indeed has an absolutely continuous limit law on $[0,\infty)^2$.

Continuing in the same way, assuming $|C_i|<n/4$ and $t\sqrt{n}<n/4$ in order for the calculus estimate to work, we have
$$
\PB{|\Delta_{i+1}| = \lfloor t\sqrt{n}\rfloor  \md |C_i|} \sim \left(\frac{t}{\sqrt{n}}+\frac{|C_i|}{n}\right) \exp\left(-\frac{t^2}{2}-\frac{t |C_i|}{\sqrt{n}}\right).
$$
Summing up, this implies for  $|C_i|<n/4$ that
\begin{equation}\label{eq.Delta_i}
\PB{|\Delta_{i+1}|> t\sqrt{n} \md |C_i|} \asymp \exp\left(-\frac{t^2}{2}-\frac{t|C_i|}{\sqrt{n}}\right).
\end{equation}
For $|C_i| \geqslant n/4$, equation~\eqref{DeltaRecur} implies an exponential tail for $|\Delta_{i+1}|$ directly. We will also use a slightly different version of~\eqref{eq.Delta_i}:
\begin{equation}\label{eq.boundondeltac}
\PB{|\Delta_{i+1}|\geqslant t\frac{n}{|C_i|} \md |C_i|}  \asymp \exp\left(-t-t^2\frac{n}{2|C_i|^2}\right).
\end{equation}

Now, in order to prove boundedness of $\big(|\Delta_i|/\sqrt{n}\big)_{i\geq 1}$, for all the steps till $I$, we need the probability $\Pb{|\Delta_{i+1}|> t\sqrt{n}}$ to decay fast as $i$ increases. In light of~\eqref{eq.Delta_i}, this is equivalent to $|C_i|$ increasing steadily. In fact, we will prove that there exist constants $a,A>0$ such that
\begin{equation}\label{e.CiSLLN}
\Pb{|C_i|^2 < ani} < \exp(-A i),
\end{equation}
where, moreover, $A\to\infty$ as $a\to 0$. Assuming this, using~\eqref{eq.Delta_i},
\begin{equation}\label{e.finalbound}
\begin{split}
\Pb{|\Delta_{i+1}|> t\sqrt{n}} &= \Pb{|\Delta_{i+1}|> t\sqrt{n}, |C_i|^2\geqslant ani}+ \Pb{|\Delta_{i+1}|> t\sqrt{n}, |C_i|^2< ani}\\
&< \PB{|\Delta_{i+1}|> t\sqrt{n} \md |C_i|^2\geqslant ani} + \Pb{|C_i|^2< ani}\\
&< C \exp(-t\sqrt{ai}) + \exp(-Ai)\,.
\end{split}
\end{equation}
Given any $\eps>0$, if we take first $A$ large (by taking a small $a$), then $t$ large, we get 
$$\sum_{i=1}^\infty \Big(C\exp(-t\sqrt{ai}) + \exp(-Ai)\Big) < \eps\,,$$ 
and~\eqref{e.sqrtbdd} is proved.

Now, in order to prove~\eqref{e.CiSLLN}, consider the increments 
\begin{equation}\label{eq.CiDi}
X_{i+1} := |C_{i+1}|^2-|C_i|^2=2\,|C_i|\,|\Delta_{i+1}|+|\Delta_{i+1}|^2\,,
\end{equation}
of the process $\big(|C_i|^2\big)_{i\geq 1}$. If we prove that, conditioned on an arbitrary $C_i$, there exist constants $b_1,b_2>0$ such that
\begin{equation}\label{e.Xibound}
\Pb{X_{i+1} < b_1n \md C_i} < b_2,
\end{equation}
with $b_2\to 0$ as $b_1\to 0$, then~\eqref{e.CiSLLN} will follow by a simple argument: (\ref{e.Xibound}) implies that the random variable $I:=\#\{1\leq j\leq i: X_{j+1} > b_1 n\}$ stochastically dominates a Binom$(i,1-b_2)$ variable, thus for any $A<\infty$, if $b_2>0$ is small enough, then $\Ps{I > i/2} \geq 1-\exp(-Ai)$, and on the event $\{I>i/2\}$ we have $|C_i|^2 > b_1 n i/2$, implying~\eqref{e.CiSLLN}.

To start the proof of~\eqref{e.Xibound}, we can assume that $C_i<n/4$, since otherwise $C_{i+1}\geqslant C_i+1$ implies $X_{i+1}\geqslant n/2$ deterministically. Now, if $|C_i|<\sqrt{n}$, then~\eqref{eq.Delta_i} implies that $\Pb{|\Delta_{i+1}|>b_1\sqrt{n} \md C_i}\to 1$ as $b_1\to 0$, and hence the term $|\Delta_{i+1}|^2$ in~\eqref{eq.CiDi} ensures that~\eqref{e.Xibound} holds. On the other hand, if $\sqrt{n}\leqslant |C_i|<n/4$, then~\eqref{eq.boundondeltac} implies that $\Pb{|C_i|\,|\Delta_{i+1}|>b_1\sqrt{n} \md C_i}\to 1$ as $b_1\to 0$, and the term $|C_i|\,|\Delta_{i+1}|$ in~\eqref{eq.CiDi} ensures that~\eqref{e.Xibound} holds. That is, in all cases we are done.
\end{proof}


\section{Critical versus near-critical Erd\H{o}s-R\'enyi graphs}\label{s.ER}

We will consider the Erd\H{o}s-R\'enyi graph $G(n,p)$ in its critical window for the emergence of a giant cluster, at $p=p_n(\lambda)=1/n+\lambda/n^{4/3}$, with $\lambda\in(-\infty,\infty)$. It is always helpful to use the standard monotone coupling of these random graphs: the edges of the complete graph $K_n$ have i.i.d.~$U_e\sim$Unif$[0,1]$ labels, and then the graph with edge set $\{e\in E(K_n): U_e\leqslant p\}$ is distributed like $G(n,p)$. This way, we can think of raising $p$ or $\lambda$ as adding edges to the graph.

It is well-known that the cluster of a typical vertex in $G(n,p_n(\lambda))$ in the critical case $\lambda=0$ is locally a GW tree with Poisson offspring distribution with mean $1$, while more globally, the largest clusters have size of order $n^{2/3}$ (see \cite[Section 12.3]{PGG} for an outline of the proof and references). As we are raising $\lambda$, extra edges appear in the standard coupling; since the number of possible edges in a cluster of size $n^{2/3}$ is of order $n^{4/3}$, the number of extra edges in each large component is approximately Poisson $(f(\lambda))$ for some function $f$, while, of course, the extra edges also merge some of these components. That is, the large scale structure of large critical versus near-critical clusters resembles but does not exactly coincide with our first example: a critical random tree conditioned to be large, plus a constant number of random edges.

One result making this picture more precise is that the probability that the largest cluster $\CC^n_1(\lambda)$ of $G(n,p_n(\lambda))$ is a tree converges to some $r_0(\lambda)>0$, which decays rapidly as $\lambda\to\infty$ (see \cite{JansonKnuth1993}). Moreover, conditioned on the largest cluster being a tree on $N$ vertices, it is clearly a uniform random tree $\UST(N)$. This means that, for $\lambda=0$, with a decent positive probability the SI spreading on $G$ will encounter bottlenecks everywhere during the process, and, because of these bottlenecks at random locations, the averaged spreading curve will not converge and will produce jumps. However, at large $\lambda>0$, a typical largest cluster will have one or more extra edges, hence, on a typical realization of the cluster, we expect to see the smoothing effect. The following results from our numerical simulations show that, even at a moderately off-critical value $\lambda=5$, the smoothing effect is typically quite severe (see Figure~\ref{fig:ER_simulation}).

\begin{figure}[ht]
\centering
\begin{minipage}[b]{0.49\textwidth}
\centering
\includegraphics[scale=0.45]{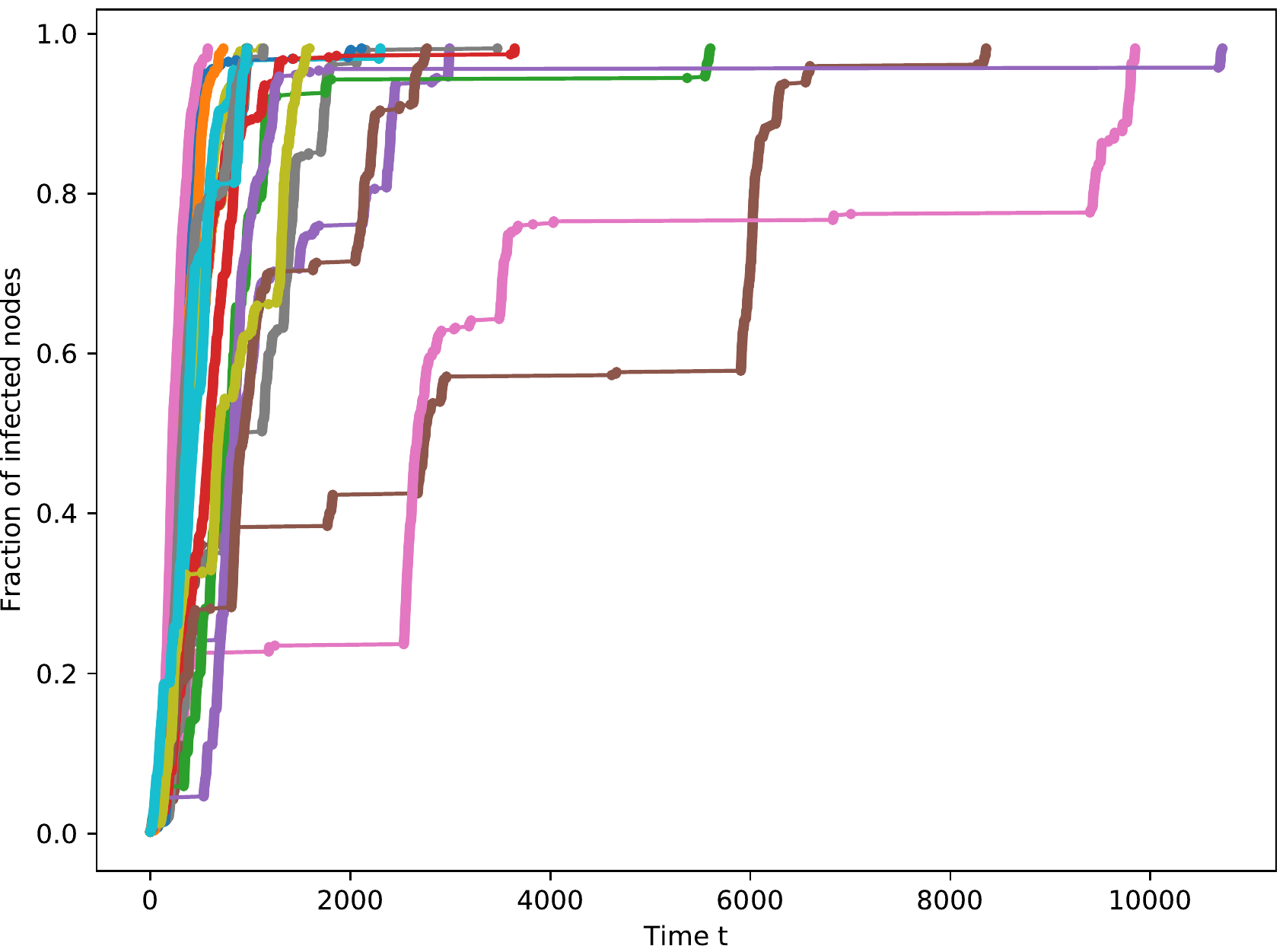}
\caption*{(a)}
\end{minipage}\hfill
\begin{minipage}[b]{0.49\textwidth}
\centering
\includegraphics[scale=0.45]{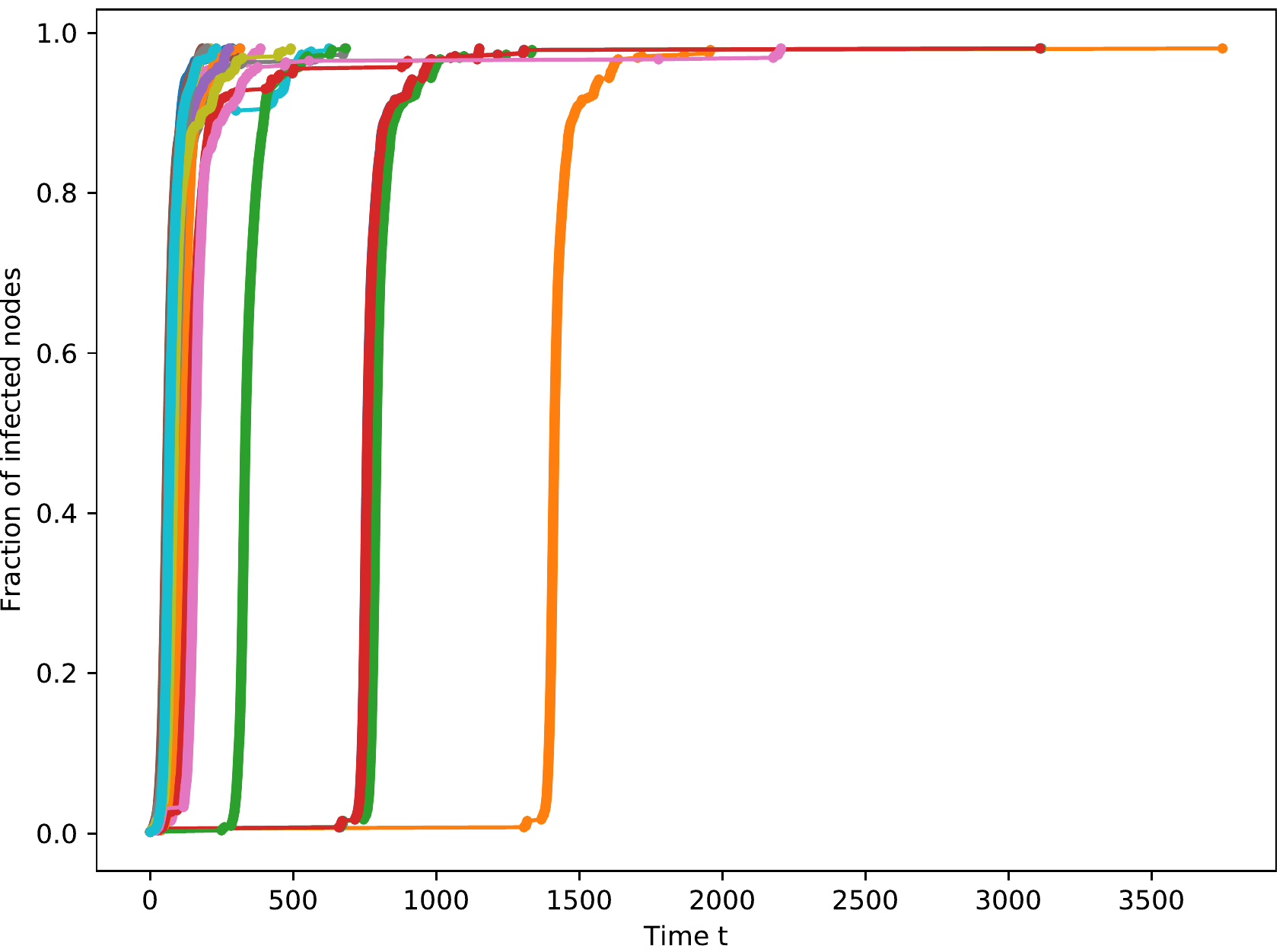}
\caption*{(b)}
\end{minipage}
\caption{Simulation of SI spreading with power law inter-event times with $\aa=0.8$ on the largest component of a near-critical Erd\H os-R\'enyi graph with $n$ vertices. The edge weights are fixed, 20 runs are shown with random starting vertices. (a) $\lambda=0$, $n=6000$ and the component has 541 vertices, no surplus edges. (b) $\lambda=5$, $n=1000$ the component has 515 vertices, 27 surplus edges.}
\label{fig:ER_simulation}
\end{figure}

Of course, in order to apply our Theorem~\ref{thm:main_smoothing}, we need to know the structure of $G$ with its extra edges, to prove that $\kappa(G,s)$ is typically positive when $\lambda$ is large. A seminal result of Aldous \cite{Aldous1997} says that the decreasingly ordered vector $\mathbf{C}^n(\lambda):=(|\CC^n_1(\lambda)|, |\CC^n_2(\lambda)|, \dots )$ of the sizes of the clusters of $G(n,p_n(\lambda))$, together with the vector $\mathbf{S^n}(\lambda):=(S^n_1(\lambda),S^n_2(\lambda),\dots)$ of the number of surplus edges in each $\CC^n_i(\lambda)$, meaning the number of edges additional to the minimum possible value $|\CC^n_i(\lambda)|-1$, has a joint scaling limit:
\begin{equation}\label{e.AldousCS}
\Big(\frac{\mathbf{C}^n(\lambda)}{n^{2/3}}, \mathbf{S}^n(\lambda)  \Big)\xrightarrow{\ d\ }  \Big(\mathbf{C}^\infty(\lambda), \mathbf{S}^\infty(\lambda)  \Big)\,,
\end{equation}
where the distribution of the limiting vector $\mathbf{C}^\infty(\lambda)$ is given by the excursion lengths (in decreasing order) of a one-dimensional Brownian motion with a parabolic drift, $B_t+\lambda t - t^2/2$, above its running minimum, and each surplus is given by the arrivals of a rate one planar Poisson point process inside the excursion. This implies, in particular, that there is a limiting distribution for the number of surplus edges in the largest cluster, $\Pb{S^n_1(\lambda)=k}\to \Pb{S^\infty_1(\lambda)=k} = r_k(\lambda)$, and it can be described using Brownian computations. For instance, for large $\lambda>0$, it is not hard to show that the Brownian perturbation around the parabola is unlikely to ruin completely the excursion of area $\Theta(\lambda^3)$ given by the positive part of the parabola, and therefore
\begin{equation}\label{e.largelambda}
\E S^\infty_1(\lambda) \asymp \lambda^3\quad\text{and}\quad S^\infty_1(\lambda)\xrightarrow{\P} \infty,\quad\text{as }\lambda\to\infty.
\end{equation}
For a proof via counting arguments, see~\cite{JansonKnuth1993}.

Building on the above \cite{Aldous1997}, the metric structure of large  near-critical clusters was described in \cite{Addario-Berry2010,Addario-Berry2010-2}. To start with, the reader should recall the notion of Aldous' {\it Brownian Continuum Random Tree} \cite{AldousCRTI,AldousCRTII,AldousCRTIII}, abbreviated as CRT from now on, which is a random real metric tree built from the Brownian Excursion measure, together with a mass measure that is supported on the leaves of the tree. It is the {\it Gromov-Hausdorff-Prokhorov limit} of critical GW trees with finite variance offspring distribution, conditioned to have volume $N$, edge lengths scaled by $\sqrt{N}$, with the mass measure coming from the uniform distribution on the vertices. (See, e.g., \cite{AldousCRTII,LeGall2005,abraham2013note} for the definitions and overviews.) Then it was proved in \cite{Addario-Berry2010} that the vector of metric spaces $(\CC^n_1(\lambda), \CC^n_2(\lambda), \dots )$ converges to some $(\CC^\infty_1(\lambda), \CC^\infty_2(\lambda), \dots )$ in the Gromov-Hausdorff topology extended to vectors in a suitable way. For the limit, the following construction was given in \cite[Procedure 1]{Addario-Berry2010-2}. 

\medskip
\noindent{\bf Construction.} Sample the sizes $(C_1,C_2,\dots)$ and surpluses $(S_1,S_2,\dots)$ for $(\CC^\infty_1(\lambda), \CC^\infty_2(\lambda), \dots )$ according to the limit distribution in~(\ref{e.AldousCS}). In particular, $S_1$ is given by the distribution $r_k(\lambda)$ mentioned before. Conditionally on these data, the coordinates $\CC^\infty_i(\lambda)$ will be independent from each other, and the distributions depend on the size $C_i$ only through a scaling. So, it is enough to describe $\CC^\infty_i(\lambda)$ conditionally on the scaled volume being $1$ and the surplus being some  $k\in\N$.
\begin{itemize}
\item If $k=0$ then let the component simply be a Brownian CRT of total mass 1. 
\item If $k=1$ then let $(X_1,X_2)$ be a $\mathrm{Dirichlet}(\frac 12,\frac 12)$ random vector, let $\mathcal{T}_1,\mathcal{T}_2$ be independent Brownian CRT's of sizes $X_1$ and $X_2$, and identify the root of $\mathcal{T}_1$ with a uniform leaf of $\mathcal T_1$ and with the root of $\mathcal T_2$, to make a ``lollipop'' shape. (For the definition of the Dirichlet distibutions, see \cite[Section~3.1]{Addario-Berry2010-2} or Wikipedia.)

\item If $k \geq 2$, let $K$ be a random 3-regular multigraph with $2(k-1)$ vertices chosen from a finite list, according to the following probability measure:
\begin{equation}\label{e.3reg}
\mu_k(K):=\frac{1}{Z_k} \Bigg(2^{t(K)}\prod_{e \in E(K)} \mathrm{mult}(e)!\Bigg)^{-1},
\end{equation}
where $t(K)$ is the number of loops in the multigraph $K$, $\mathrm{mult}(e)$ is the multiplicity of the edge $e$, and $Z_k$ is the normalization factor to get a  probability measure. This $K$ will be called the {\it kernel} of the cluster. Then:
\begin{enumerate}
\item Order the edges of $K$ arbitrarily as $e_1,\ldots,e_{3(k-1)}$, with $e_i=u_iv_i$. 
\item Let $(X_1,\ldots,X_{3(k-1)})$ be a $\mathrm{Dirichlet}(\frac 12,\ldots,\frac 12)$ random vector. 
\item Let $\mathcal{T}_1,\ldots,\mathcal{T}_{3(k-1)}$ be independent Brownian CRT's, with tree $\mathcal{T}_i$ having mass $X_i$, and for each $i$ let $r_i$ and $s_i$ be the root and a uniform leaf of $\mathcal T_i$. 
\item Form the component by replacing edge $u_iv_i$ with tree $\mathcal{T}_i$, identifying $r_i$ with $u_i$ and $s_i$ with $v_i$, for $i=1,\ldots,3(k-1)$.
\end{enumerate}
\end{itemize}

We will now pull back this information on the scaling limit to the discrete graphs:

\proof[Proof of Theorem~\ref{thm:ER}] {\bf (1)} As a warm-up, note that the weaker result 
\begin{equation}\label{e.kappa0}
\kappa(\CC_1^n,\sigma)/|\CC_1^n|\xrightarrow{\P}0
\end{equation} 
follows easily from the scaling limit description. It is well-known that the mass measure of the CRT is fully supported on the leaves; that is, a uniform random vertex in a discrete random tree on $N$ vertices that converges to the CRT will have, with probability tending to 1, only a single macroscopic path emanating from it, i.e., going to distance of order $\sqrt N$. By the mass measure having no atoms in the scaling limit, pieces of vanishing diameter have vanishing volume in the discrete tree, and hence by cutting the one macroscopic path, we can separate most of the tree from the uniform vertex. Together with the construction above that glues finitely many CRT's, we obtain~\eqref{e.kappa0}.

In order to prove the stronger statement, knowledge about the scaling limit will not suffice, since that cannot distinguish between different distances of size $o(\sqrt{|\CC_1^n|})$ and between different volumes of size $o(|\CC_1^n|)$. That is, we need to go back to the discrete model, and the breadth-first search exploration process used in \cite{Aldous1997}.

Let $\B$ be the event that the largest cluster is the cluster $\CC$ of vertex $1$. By symmetry, it is enough to prove that $\Pb{\kappa(\CC,1)>K \md \B}<\eps$ if $K>K_0(\eps)$ is large enough, for all $n>n_0(\lambda,\eps)$. Furthermore, let $\A_\delta$ be the event that the exploration process of the cluster of 1 finds that $\CC$ has size at least $\delta n^{2/3}$. Now observe the following two facts:
\begin{itemize}
\item For any $\delta>0$, there exists $\tilde\delta >  0$, tending to 0 with $\delta$, such that, for all large enough $n$,
$$
\Pb{\B \md \A_\delta} > \tilde\delta
\qquad\text{and}\qquad
\Pb{\A_\delta \md \B} > 1-\tilde\delta\,.
$$
These follow easily from \cite{Aldous1997}. Indeed, for the first inequality, the event $\{$all clusters other than $\CC$ are smaller than $\delta n^{2/3}\}$ has probability at least some $\tilde\delta$, it is positively correlated with $\A_\delta$, and their intersection implies $\B$. The second inequality follows from the size of the largest cluster having a scaling limit with no atom at 0. 
\item For the exploration process, given $\A_\delta$ with any $\delta>0$, the proof of Theorem~\ref{thm:GW_smoothing}~(1) applies, hence, for any $\eta>0$, if $K>K_0(\delta,\eta)$ is large enough, then 
$$
\Pb{\kappa(\CC,1)>K \md \A_\delta}<\eta\,.
$$
\end{itemize}
Now, combining these two facts, with $\eta=\tilde\delta^2$ and writing $\RR_K$ for the event $\{\kappa(\CC,1)>K\}$,
\begin{align*}
\Pb{\RR_K \md \B} = \frac{\Ps{\RR_K \cap  \B}}{\Ps{\B}} &\leq \frac{\Ps{\RR_K\cap\A_\delta} + \Ps{\B\setminus \A_\delta} }{\Ps{\B}}\\
&\leq \frac{\Ps{\RR_K\md \A_\delta} \Ps{\A_\delta}}{\Ps{\B\cap\A_\delta}} + \frac{\Ps{\B\setminus \A_\delta}}{\Ps{\B}}\\
& = \frac{\Ps{\RR_K \md \A_\delta}}{\Ps{\B\md\A_\delta}}+\Ps{\A_\delta^c \md \B} < \frac{\tilde\delta^2}{\tilde\delta} + \tilde\delta=2\tilde\delta\,.
\end{align*}
Since $\delta>0$ can be chosen so small that $2\tilde\delta<\eps$, this finishes the proof of part (1).
\medskip

\noindent {\bf (2)} For the lower bound, $\CC$ always contains a spanning tree with the structure of the $k=0$ case, converging to Aldous' CRT. In this limit, removing any vertex whose distance from the set of leaves is positive  breaks the tree into pieces of positive mass measure (by the self-similar nature of the CRT). Translating this to the discrete tree, of volume $N$, with probability close to 1, there exist vertices that have at least two disjoint paths of length $\delta \sqrt{N}$ if $\delta$ is small enough, and the resulting subtrees have volume $\delta_0 N$. 

For the upper bound, just notice that the surplus is at most some $k=k(\lambda,\eps)$ with probability at least $1-\eps$, then we break the total volume $N$ into at most $3k-3$ pieces, hence the largest of these pieces has volume at least $N / (3k-3)$. Regardless of how this piece fits in the kernel, by the structure of the CRT, with probability close to 1 if $\delta$ is small enough, it has a subtree of volume at least $\delta N / (3k-3)$ that can be separated from $\CC$ by a single edge. This implies the claimed  bound $(1-\delta_1(\lambda,\eps)) N$.
\medskip

\noindent {\bf (3)} The intuition for the proof is that the macroscopic structure of the largest cluster is more and more determined by the kernel as $\lambda\to\infty$, and this kernel more and more becomes an expander graph. This is similar to the strategy used in \cite{BenKozWor} to understand the mixing time of random walk on the giant cluster in a supercritical  Erd\H{o}s-R\'enyi graph. We will need two lemmas:

\begin{lemma}\label{lem.DirMax}
If $(X_1,\dots,X_n)$ is a $\mathrm{Dirichlet}(\frac 12,\ldots,\frac 12)$ random vector, then $\max_{1\leq i\leq n} X_i$ converges to 0 in probability, as $n\to\infty$.
\end{lemma}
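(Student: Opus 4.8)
The plan is to use the standard construction of the Dirichlet law from independent Gamma variables. Let $G_1,\dots,G_n$ be i.i.d.\ $\mathrm{Gamma}(1/2,1)$ random variables and set $S_n=\sum_{j=1}^n G_j$; then $(G_1/S_n,\dots,G_n/S_n)$ has exactly the $\mathrm{Dirichlet}(\tfrac12,\dots,\tfrac12)$ distribution, so it suffices to show that $\max_{1\le i\le n} G_i / S_n \to 0$ in probability. (Equivalently, writing $G_i=Z_i^2/2$ with $Z_i$ i.i.d.\ standard normal, this is $\max_i Z_i^2 / \|Z\|^2$, but the Gamma formulation is all we need.) The argument then splits into a lower bound on the denominator and an upper bound on the numerator.

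First I would control the denominator: since $\E G_1 = 1/2$, the weak law of large numbers gives $S_n/n \to 1/2$ in probability, so for every $\eta>0$ we have $\Ps{S_n < n/4} < \eta$ once $n$ is large. Next I would control the numerator by a union bound using the exponential tail of the Gamma distribution: the $\mathrm{Gamma}(1/2,1)$ density is $\pi^{-1/2} x^{-1/2} e^{-x}$, so $\Ps{G_1 > t} \le \pi^{-1/2} t^{-1/2} e^{-t}$ for all $t>0$, whence $\Ps{\max_{1\le i\le n} G_i > \eps n} \le n\,\Ps{G_1 > \eps n} \to 0$ for each fixed $\eps>0$ (in fact super-polynomially fast). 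Combining, on the intersection of $\{S_n \ge n/4\}$ and $\{\max_i G_i \le \eps n\}$, whose probability tends to $1$, we have $\max_i X_i \le 4\eps$; letting $\eps\downarrow 0$ finishes the proof.

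I do not expect any genuine obstacle here: the two estimates above are both one-liners (the law of large numbers for the denominator, and an elementary Gamma-tail bound plus union bound for the numerator). If one wishes to avoid the Gamma representation entirely, the same scheme goes through directly from the $\mathrm{Beta}(\tfrac12,\tfrac{n-1}{2})$ marginal of $X_1$, using $\Ps{X_1 > \eps} \le C\sqrt{n}\,(1-\eps)^{n/2}$ together with a union bound over the $n$ coordinates; this variant is only marginally messier because of the ratio of Gamma functions $\Gamma(n/2)/\Gamma((n-1)/2) \asymp \sqrt{n}$ appearing in the normalization of the Beta density.
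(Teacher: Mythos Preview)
Your proof is correct. The paper, however, takes a shorter route: it works directly with the $\mathrm{Beta}(\tfrac12,\tfrac{n-1}{2})$ marginal of each $X_i$, but instead of bounding the tail of the density (your alternative at the end), it simply computes $\E[X_i^2]\sim 3/n^2$ and applies Markov's inequality to $X_i^2$ followed by a union bound, obtaining $\Pb{\max_i X_i > n^{-1/4}} \leq n\cdot(3+o(1))\sqrt{n}/n^2 \to 0$ in one line. Your Gamma-representation argument is slightly longer because it splits into two estimates (WLLN for the denominator, exponential tail for the numerator), but it has the advantage of giving a much sharper quantitative bound---the maximum is in fact $O((\log n)/n)$ rather than just $o(n^{-1/4})$---and of making transparent why the result holds (the $G_i$ have light tails while $S_n$ is of order $n$). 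For the purposes of the paper either proof suffices; the second-moment trick is just more economical when no rate is needed.
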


\proof The marginal distribution of each $X_i$ is $\mathrm{Beta}(\frac12,\frac{n-1}{2})$. It is well-known and easy to calculate that $\Eb{ \mathrm{Beta}(\frac12,\frac{n-1}{2})^2} \sim \frac{3}{n^2}$. Thus, Markov's inequality for $X_i^2$ and a union bound give
$$
\PB{\max_{1\leq i\leq n} X_i > \frac{1}{n^{1/4}} } \leq n \, \Pb{X_i^2 > \frac{1}{n^{1/2}} } \leq n \frac{(3+o(1)) \sqrt{n}}{n^2} \to 0\,,
$$ 
as desired.\qed

\begin{lemma}\label{lem.3reg}
For any $\eps>0$ fixed, if $K$ is sampled according to $\mu_k$, then the probability that $K$ has a cut-edge that has at least $\eps k$ vertices on either side goes to 0 as $k\to\infty$.
\end{lemma}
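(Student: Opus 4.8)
The plan is to first recognize $\mu_k$ as a familiar random multigraph model, and then run a direct first‑moment computation showing that a ``balanced'' cut‑edge is exponentially unlikely.

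\textbf{Step 1 — $\mu_k$ is the configuration model.} Write $n:=2(k-1)$ and consider the configuration model $\mathrm{CM}(n;3)$: put $3$ half‑edges at each of $n$ vertices, take a uniformly random perfect matching of the $3n=6(k-1)$ half‑edges (note $3n$ is even), and contract each matched pair into an edge. For a cubic multigraph $K$ with $t(K)$ loops and edge multiplicities $\mathrm{mult}(e)$, the number of half‑edge matchings projecting to $K$ is $(3!)^{n}/\big(2^{t(K)}\prod_{e}\mathrm{mult}(e)!\big)$, so $\Pr_{\mathrm{CM}}[K]\propto\big(2^{t(K)}\prod_{e}\mathrm{mult}(e)!\big)^{-1}$. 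This is exactly the weight in~\eqref{e.3reg}, and both are probability measures on the same finite set, hence $\mu_k=\mathrm{law}\big(\mathrm{CM}(n;3)\big)$. So it suffices to prove the statement for $K\sim\mathrm{CM}(n;3)$.

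\textbf{Step 2 — reduction to a single crossing edge.} If $K$ has a cut‑edge with at least $\eps k$ vertices on each side, then, taking $S$ to be the vertex set of one side of that bridge, there is $S\subseteq V$ with $\eps k\le|S|\le n-\eps k$ such that \emph{exactly one} edge of $K$ joins $S$ to $V\setminus S$. Letting $\mathcal N$ be the number of such subsets $S$, it is enough to show $\E\,\mathcal N\to 0$ and apply Markov's inequality. In the pairing model, $S$ with $|S|=s$ contains $3s$ half‑edges; having exactly one matched outside $S$ forces $3s$ (hence $3(n-s)$) to be odd, and then
\begin{equation}\label{e.ps}
p_s \;=\; \frac{\,3s\cdot 3(n-s)\cdot(3s-2)!!\cdot\big(3(n-s)-2\big)!!\,}{(3n-1)!!}\,,
\end{equation}
by choosing the crossing pair in $3s\cdot3(n-s)$ ways and then matching the remaining half‑edges inside each side; the probability is $0$ when $3s$ is even. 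Thus $\E\,\mathcal N\le\sum_{s}\binom{n}{s}p_s$, summed over $\eps k\le s\le n-\eps k$ of the correct parity.

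\textbf{Step 3 — Stirling estimate and conclusion.} Using $(2m-1)!!=(2m)!/(2^m m!)\sim\sqrt2\,(2m/e)^m$ and the usual estimate for $\binom{n}{s}$, all factors of $3$ and $e$ cancel, and writing $s=xn$ one obtains
$$
\binom{n}{s}p_s \;\le\; \mathrm{poly}(n)\cdot\big(x^{x}(1-x)^{1-x}\big)^{n/2}.
$$
Since $n=2(k-1)$, the constraint $\eps k\le s\le n-\eps k$ gives $x\in[\eps/3,\,1-\eps/3]$ for all large $k$, on which $x^{x}(1-x)^{1-x}$ attains its maximum $q:=(\eps/3)^{\eps/3}(1-\eps/3)^{1-\eps/3}<1$ at the endpoints. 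Hence each summand is at most $\mathrm{poly}(n)\,q^{n}$, and summing the $\le n$ terms yields $\E\,\mathcal N\le\mathrm{poly}(n)\,q^{n}\to0$ as $k\to\infty$, which proves the lemma.

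\textbf{Main obstacle.} The only genuinely delicate point is the bookkeeping in Step~3: keeping track of the parity constraint and of the polynomial prefactors (from the Stirling corrections to the double factorials and to $\binom{n}{s}$) so that the exponential factor $q^{n}$ with $q<1$ really dominates. Everything else is routine. An alternative would be to invoke known edge‑connectivity/expansion results for random cubic (multi)graphs — a balanced cut‑edge is an $\Omega(n)$‑scale obstruction, whereas the configuration model's only low‑edge‑connectivity defects involve $O(1)$ vertices — but the self‑contained first‑moment argument above is cleaner and self‑contained.
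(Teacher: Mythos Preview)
Your proof is correct. You and the paper agree on Step~1 --- identifying $\mu_k$ as the law of the standard random cubic multigraph (your configuration model description is equivalent to the paper's ``$3(k-1)$ independent random pairs conditioned on 3-regularity''). After that, the approaches diverge: the paper simply cites \cite{BenKozWor} for the fact that $K$ is an expander with high probability (which immediately rules out any balanced cut-edge), whereas you run a self-contained first-moment computation in the pairing model. Your route is more elementary and requires no external input beyond Stirling; the paper's route is a two-line appeal to a known structural result. Two minor remarks on bookkeeping: (i) your exponent should really be $q^{n/2}$ rather than $q^n$ (since you derived $(x^x(1-x)^{1-x})^{n/2}$), which of course still tends to~$0$; (ii) the parity constraint ``$3s$ odd'' is just ``$s$ odd'', and your bound $x\ge\eps/3$ is conservative (in fact $\eps k/n\ge\eps/2$), but this only helps.
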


\proof As explained, e.g., in \cite{Wormald1999models}, the weights in~(\ref{e.3reg}) mean that $K$ can be generated by the model of taking $3(k-1)$ independent random pairs on the set of $2(k-1)$ vertices, conditioned on 3-regularity. Then the results of \cite[Section 5]{BenKozWor} apply, meaning that $K$ is an expander with probability tending to 1, as  $k\to\infty$.\qed
\medskip

The combination of~\eqref{e.largelambda} with Lemma~\ref{lem.3reg} gives us that the kernel has no ``large-scale cut-edges'' with high probability as $\lambda\to\infty$. Combining also with Lemma~\ref{lem.DirMax}, we get that there is a 2-connected core of $\CC$ from which all subgraphs hanging off have small volume. This immediately implies the statement.
\qed


\section*{Acknowledgements}

We are indebted to J\'anos Kert\'esz for drawing our attention to the phenomenon studied in the paper, for many useful discussions, and constant support;  to J\'ulia Komj\'athy for suggesting that we look at near-critical Erd\H{o}s-R\'enyi graphs; to Bal\'azs R\'ath for many comments and discussions regarding the manuscript; and to Louigi Addario-Berry for some references.

Most of the work was done while AM was a PhD student at the Central European University, Budapest. AM also gratefully acknowledges financial support from the European Research Council grant ``Limits of discrete structures'', 617747, ARC (Federation Wallonia-Brussels) project "Big Data Models" and from Grant 16-01-00499 of the Russian Foundation of Basic Research. GP acknowledges support from the Hungarian National Research, Development, and Innovation Office, NKFIH grant K109684, and from an MTA R\'enyi Institute ``Lend\"ulet'' Research Group. 

\bibliographystyle{alpha}
\bibliography{references}

\newcommand{\etalchar}[1]{$^{#1}$}
\begin{thebibliography}{PSCVMV15}

\bibitem[ABBG10a]{Addario-Berry2010}
L.~Addario-Berry, N.~Broutin, and C.~Goldschmidt.
\newblock The continuum limit of critical random graphs.
\newblock {\em Probability Theory and Related Fields}, 152(3):367--406, 2010.

\bibitem[ABBG10b]{Addario-Berry2010-2}
Louigi Addario-Berry, Nicolas Broutin, and Christina Goldschmidt.
\newblock Critical random graphs: Limiting constructions and distributional
  properties.
\newblock {\em Electron. J. Probab.}, 15:no. 25, 741--775, 2010.

\bibitem[ABDJ13]{Janson2011}
Louigi Addario-Berry, Luc Devroye, and Svante Janson.
\newblock Sub-{G}aussian tail bounds for the width and height of conditioned
  {G}alton-{W}atson trees.
\newblock {\em Ann. Probab.}, 41(2):1072--1087, 2013.

\bibitem[ADH13]{abraham2013note}
Romain Abraham, Jean-Fran{\c{c}}ois Delmas, and Patrick Hoscheit.
\newblock A note on the {G}romov-{H}ausdorff-{P}rokhorov distance between
  (locally) compact metric measure spaces.
\newblock {\em Electronic Journal of Probability}, 18, 2013.

\bibitem[AHD15]{FPP50}
Antonio Auffinger, Jack Hanson, and Michael Damron.
\newblock 50 years of first passage percolation.
\newblock {\em arXiv preprint arXiv:1511.03262}, 2015.

\bibitem[AL07]{AldousLyons}
David Aldous and Russell Lyons.
\newblock Processes on unimodular random networks.
\newblock {\em Electron. J. Probab}, 12(54):1454--1508, 2007.

\bibitem[Ald91a]{AldousCRTI}
David Aldous.
\newblock The continuum random tree {I}.
\newblock {\em The Annals of Probability}, pages 1--28, 1991.

\bibitem[Ald91b]{AldousCRTII}
David Aldous.
\newblock The continuum random tree {II}: an overview.
\newblock {\em Stochastic analysis}, 167:23--70, 1991.

\bibitem[Ald93]{AldousCRTIII}
David Aldous.
\newblock The continuum random tree {III}.
\newblock {\em The Annals of Probability}, pages 248--289, 1993.

\bibitem[Ald97]{Aldous1997}
David Aldous.
\newblock Brownian excursions, critical random graphs and the multiplicative
  coalescent.
\newblock {\em The Annals of Probability}, 25(2):812--854, 1997.

\bibitem[BGD{\etalchar{+}}17]{MartinBivas2017}
Ayan~Kumar Bhowmick, Martin Gueuning, Jean-Charles Delvenne, Renaud Lambiotte,
  and Bivas Mitra.
\newblock Temporal pattern of (re)tweets reveal cascade migration.
\newblock In {\em Proceedings of IEEE/ACM International Conference on Advances
  in Social Networks Analysis and Mining ASONAM 2017, Sydney, Australia}, 2017.
\newblock to appear.

\bibitem[BKW14]{BenKozWor}
Itai Benjamini, Gady Kozma, and Nicholas Wormald.
\newblock The mixing time of the giant component of a random graph.
\newblock {\em Random Structures \& Algorithms}, 45(3):383--407, 2014.

\bibitem[BS01]{BSch}
Itai Benjamini and Oded Schramm.
\newblock Recurrence of distributional limits of finite planar graphs.
\newblock {\em Electron. J. Probab.}, 6:no. 23, 1--13, 2001.

\bibitem[BvdHH10]{BvdHH}
Shankar Bhamidi, Remco van~der Hofstad, and Gerard Hooghiemstra.
\newblock First passage percolation on random graphs with finite mean degrees.
\newblock {\em Ann. Appl. Probab.}, 20(5):1907--1965, 10 2010.

\bibitem[BvdHH11]{vdHBhamidi2011}
Shankar Bhamidi, Remco van~der Hofstad, and Gerard Hooghiemstra.
\newblock First passage percolation on the {E}rd{\H{o}}s--{R}{\'e}nyi random
  graph.
\newblock {\em Combinatorics, Probability and Computing}, 20(5):683--707, 2011.

\bibitem[BvdHH17]{bhamidi2014universality}
S~Bhamidi, R~van~der Hofstad, and G~Hooghiemstra.
\newblock Universality for first passage percolation on sparse random graphs.
\newblock {\em Ann. Probab.}, to appear, 2017+.
\newblock arXiv:1210.6839.

\bibitem[BvdHK14]{Komjathy2014}
Shankar Bhamidi, Remco van~der Hofstad, and J\'ulia Komj\'athy.
\newblock The front of the epidemic spread and first passage percolation.
\newblock {\em Journal of Applied Probability}, 51A:101--121, 12 2014.

\bibitem[BvdHK17]{BaroniKomjathy2017}
Enrico Baroni, Remco van~der Hofstad, and J\'ulia Komj\'athy.
\newblock Nonuniversality of weighted random graphs with infinite variance
  degree.
\newblock {\em Journal of Applied Probability}, 54(1):146--164, 2017.

\bibitem[CP00]{camarri2000limit}
Michael Camarri and Jim Pitman.
\newblock Limit distributions and random trees derived from the birthday
  problem with unequal probabilities.
\newblock {\em Electron. J. Probab}, 5(2):1--18, 2000.

\bibitem[DKLP11]{AnatomyYoung}
Jian Ding, Jeong~Han Kim, Eyal Lubetzky, and Yuval Peres.
\newblock Anatomy of a young giant component in the random graph.
\newblock {\em Random Struct. Alg.}, 39:139--178, 2011.

\bibitem[Dur10]{Durrett2010}
Rick Durrett.
\newblock {\em Probability: Theory and examples}.
\newblock Cambridge University Press, 4th edition, 2010.

\bibitem[FP93]{FillPemantle1993}
James~Allen Fill and Robin Pemantle.
\newblock Percolation, first-passage percolation and covering times for
  {R}ichardson's model on the n-cube.
\newblock {\em Ann. Appl. Probab.}, 3(2):593--629, 05 1993.

\bibitem[GCSdA{\etalchar{+}}17]{YeraliWiki2017}
Yerali Gandica, Joao Carvalho, Fernando Sampaio~dos Aidos, Renaud Lambiotte,
  and Timoteo Carletti.
\newblock Stationarity of the inter-event power-law distributions.
\newblock {\em PLOS ONE}, 12(3):1--10, 03 2017.

\bibitem[HK14]{HorvathKertesz2014}
Dávid~X Horváth and János Kertész.
\newblock Spreading dynamics on networks: the role of burstiness, topology and
  non-stationarity.
\newblock {\em New Journal of Physics}, 16(7):073037, 2014.

\bibitem[HW65]{Hammersley}
J.~Hammersley and D.~Welsh.
\newblock First-passage percolation, subadditive processes, stochastic
  networks, and generalized renewal theory.
\newblock In {\em Proc. Internat. Res. Semin., Statist. Lab., Univ. California,
  Berkeley}, pages 61--110. Springer-Verlag, New York, 1965.

\bibitem[IM09]{IribarrenMoro2009}
Jos\'e~Luis Iribarren and Esteban Moro.
\newblock Impact of human activity patterns on the dynamics of information
  diffusion.
\newblock {\em Phys. Rev. Lett.}, 103:038702, Jul 2009.

\bibitem[Jan12]{Janson2012}
Svante Janson.
\newblock Simply generated trees, conditioned {G}alton-{W}atson trees, random
  allocations and condensation.
\newblock {\em Probab. Surveys}, 9:103--252, 2012.

\bibitem[JK{\L}P93]{JansonKnuth1993}
Svante Janson, Donald~E. Knuth, Tomasz {\L}uczak, and Boris Pittel.
\newblock The birth of the giant component.
\newblock {\em Random Structures \& Algorithms}, 4(3):233--358, 1993.

\bibitem[JPKK14]{Jo_etal2014}
Hang-Hyun Jo, Juan~I. Perotti, Kimmo Kaski, and J\'anos Kert\'esz.
\newblock Analytically solvable model of spreading dynamics with
  non-{P}oissonian processes.
\newblock {\em Phys. Rev. X}, 4:011041, Mar 2014.

\bibitem[Ker98]{Kersting}
G{\"o}tz Kersting.
\newblock On the height profile of a conditioned {G}alton-{W}atson tree.
\newblock {\em arXiv preprint arXiv:1101.3656}, 1998.

\bibitem[Kes86]{Kesten1986}
Harry Kesten.
\newblock Subdiffusive behavior of random walk on a random cluster.
\newblock {\em Annales de l'Institut Henri Poincar{\'e}. Probabilit{\'e}s et
  Statistiques}, 22(4):425--487, 1986.

\bibitem[Kes87]{Kesten1987}
Harry Kesten.
\newblock Percolation theory and first-passage percolation.
\newblock {\em Annals of Probability}, 15(4):1231--1271, 1987.

\bibitem[KKP{\etalchar{+}}11]{Karsai_etal2011}
M.~Karsai, M.~Kivel\"a, R.~K. Pan, K.~Kaski, J.~Kert\'esz, A.-L. Barab\'asi,
  and J.~Saram\"aki.
\newblock Small but slow world: How network topology and burstiness slow down
  spreading.
\newblock {\em Phys. Rev. E}, 83:025102, Feb 2011.

\bibitem[KNS66]{Kesten1966}
H.~Kesten, P.~Ney, and F.~Spitzer.
\newblock The {G}alton-{W}atson process with mean one and finite variance.
\newblock {\em Theory of Probability \& Its Applications}, 11(4):513--540,
  1966.

\bibitem[Kol86]{Kolchin1986}
Valentin~F. Kolchin.
\newblock {\em Random mappings}.
\newblock Optimization Software Inc., 1986.

\bibitem[LG05]{LeGall2005}
Jean-Fran{\c{c}}ois Le~Gall.
\newblock Random trees and applications.
\newblock {\em Probab. Surv}, 2(245):10--1214, 2005.

\bibitem[Lin99]{Lindvall1999}
Torgny Lindvall.
\newblock On {S}trassen's theorem on stochastic domination.
\newblock {\em Electron. Commun. Probab.}, 4:no. 7, 51--59, 1999.

\bibitem[LP16]{LPbook}
Russell Lyons and Yuval Peres.
\newblock {\em Probability on Trees and Networks}, volume~42 of {\em Cambridge
  Series in Statistical and Probabilistic Mathematics}.
\newblock Cambridge University Press, New York, 2016.
\newblock Available at \url{http://pages.iu.edu/~rdlyons/}.

\bibitem[MH13]{HolmeMasuda2013}
Naoki Masuda and Petter Holme.
\newblock Predicting and controlling infectious disease epidemics using
  temporal networks.
\newblock {\em F1000 prime reports}, 5:6, 2013.

\bibitem[MM78]{MeirMoon1978}
A.~Meir and J.~W. Moon.
\newblock On the altitude of nodes in random trees.
\newblock {\em Canadian Journal of Mathematics}, 30:997--1015, 1978.

\bibitem[New03]{MN1}
M.~E.~J. Newman.
\newblock The structure and function of complex networks.
\newblock {\em SIAM Review}, 45(2):167--256, 2003.

\bibitem[Pem90]{Pemantle1990}
Robin Pemantle.
\newblock A time-dependent version of {P}{\'o}lya's urn.
\newblock {\em Journal of Theoretical Probability}, 3(4):627--637, 1990.

\bibitem[Pet17]{PGG}
G\'abor Pete.
\newblock {\em Probability and Geometry on Groups}.
\newblock In preparation, 2017.
\newblock Available at \url{http://www.math.bme.hu/~gabor/PGG.pdf}.

\bibitem[PR04]{PeresRevelle2004}
Yuval Peres and David Revelle.
\newblock Scaling limits of the uniform spanning tree and loop-erased random
  walk on finite graphs.
\newblock October 2004.
\newblock arXiv:math/0410430.

\bibitem[PSCVMV15]{RevModPhys2015}
Romualdo Pastor-Satorras, Claudio Castellano, Piet Van~Mieghem, and Alessandro
  Vespignani.
\newblock Epidemic processes in complex networks.
\newblock {\em Rev. Mod. Phys.}, 87:925--979, Aug 2015.

\bibitem[Str65]{Strassen1965}
V.~Strassen.
\newblock The existence of probability measures with given marginals.
\newblock {\em The Annals of Mathematical Statistics}, 36(2):423--439, 1965.

\bibitem[Stu15]{StuflerEnriched}
Benedikt Stufler.
\newblock Random enriched trees with applications to random graphs.
\newblock {\em arXiv preprint arXiv:1504.02006}, 2015.

\bibitem[vdH16]{RemcoVDH}
R.~van~der Hofstad.
\newblock {\em Random Graphs and Complex Networks}, volume~1.
\newblock Cambridge University Press, 2016.
\newblock \url{http://www.win.tue.nl/~rhofstad/NotesRGCN.pdf}.

\bibitem[vdH17]{RemcoV2}
R.~van~der Hofstad.
\newblock {\em Random Graphs and Complex Networks}, volume~2.
\newblock In preparation, 2017.
\newblock \url{http://www.win.tue.nl/~rhofstad/NotesRGCNII.pdf}.

\bibitem[Ves12]{Vespignani2012}
Alessandro Vespignani.
\newblock Modelling dynamical processes in complex socio-technical systems.
\newblock {\em Nature Physics}, 8:32--39, 2012.

\bibitem[Wil96]{Wilson1996}
David~Bruce Wilson.
\newblock Generating random spanning trees more quickly than the cover time.
\newblock In {\em Proceedings of the Twenty-eighth Annual ACM Symposium on
  Theory of Computing}, STOC '96, pages 296--303, New York, NY, USA, 1996. ACM.

\bibitem[Wor99]{Wormald1999models}
Nicholas~C Wormald.
\newblock Models of random regular graphs.
\newblock {\em London Mathematical Society Lecture Note Series}, pages
  239--298, 1999.

\end{thebibliography}

\end{document}